\def\pd#1#2{\frac{\partial#1}{\partial#2}}
\def\di{\bigstar}
\newcommand{\bea}{\begin{eqnarray}}
\newcommand{\eea}{\end{eqnarray}}
\newcommand{\be}{\begin{equation}}
\newcommand{\ee}{\end{equation}}
\theoremstyle{plain}
\newtheorem{theorem}{Theorem}
\newtheorem{corollary}[theorem]{Corollary}
\newtheorem{proposition}[theorem]{Proposition}
\newtheorem{lemma}[theorem]{Lemma}
\theoremstyle{definition}
\newtheorem{definition}[theorem]{Definition}
\newtheorem{note}[theorem]{Note}
\newtheorem{example}{Example}
\newtheorem{remark}{Remark}
\def\R{\mathbb{R}}
\def\Diff{\operatorname{Diff}}
\def\cG{\mathcal{G}}
\def\bX{\mathbf{X}}
\def\bY{\mathbf{Y}}
\newcommand{\beas}{\begin{eqnarray*}}
\newcommand{\eeas}{\end{eqnarray*}}
\numberwithin{equation}{section}
\numberwithin{theorem}{section}
\numberwithin{example}{section}
\begin{document}

\centerline{\Large {\bf Quasi-Lie schemes for PDEs}}
\vskip 0.5cm
%\centerline{\Large {\bf for partial differential equations}} \vskip 0.75cm

\centerline{ Jos\'e F. Cari\~nena$^{(a)}$, Janusz Grabowski$^{(b)}$
 and Javier de Lucas$^{(c)}$}
\vskip 0.5cm

\centerline{$^{(a)}$Departamento de  F\'{\i}sica Te\'orica, Universidad de Zaragoza,}
\medskip
\centerline{P. Cerbuna 12, 50009 Zaragoza, Spain}
\medskip
\centerline{$^{(b)}$Institute of Mathematics, Polish Academy of Sciences,}
\medskip
\centerline{ul. \'Sniadeckich 8, 00-656 Warszawa, Poland}
\medskip
\centerline{$^{(c)}$Department of Mathematical Methods in Physics, University of Warsaw,}
\medskip
\centerline{ul. Pasteura 5, 02-093 Warszawa, Poland}
\medskip
%\centerline{$10^{\rm st}$ March 2010}

\vskip 1cm

\begin{abstract}{
%This work extends methods of dealing with autonomous superposition rules known for systems of ordinary differential equations to $t$-dependent superposition rules for systems of partial differential equations.
The theory of \emph{quasi-Lie systems},
i.e. systems of {first order} ordinary differential equations which can be related \emph{via} a generalised flow to Lie systems, is extended to systems of partial differential equations and its applications to obtaining $t$-dependent superposition rules and integrability conditions are analysed. We develop a procedure of constructing quasi-Lie systems through a generalisation to PDEs of the so-called theory of \emph{quasi-Lie schemes}. Our techniques are illustrated with the analysis of Wess-Zumino-Novikov-Witten models, generalised Abel differential equations, B\"acklund transformations, as well as other differential equations of physical and mathematical relevance.
}

\bigskip\noindent
\textit{{\bf MSC 2000:} 34A26 (primary), 34A05, 34A34, 17B66, 22E70 (Secondary)}

\medskip\noindent
\textit{{\bf PACS numbers:} 02.30.Hq, 02.30.Jr, 02.40.-k}

\medskip\noindent
\textit{{\bf Key words:} Abel differential equation, B\"acklund transformation, integrability condition, Lie system, nonlinear superposition rule,  quasi-Lie scheme, Wess-Zumino-Novikov-Witten model.}

\end{abstract}

\section{Introduction.}
A {\it Lie system} is a non-autonomous system of
first-order ordinary differential equations whose general solution can be written as an autonomous function, a so-called {\it superposition rule}, depending on a generic finite set of particular solutions and some constants \cite{CGM07,LS}. Examples of Lie systems are matrix Riccati equations \cite{PW}, non-autonomous linear systems of first-order ordinary differential equations \cite{Dissertationes}, and types of Bernoulli equations (cf. \cite{BHLS15,Dissertationes}).

%Superposition rules enable us to obtain the general solution of a Lie system by determining a finite set of particular solutions.
Superposition rules are explicitly known even for systems of differential equations whose general solutions are not,
like in the case of Riccati equations and most of their generalisations \cite{CLR07b,In44}. As a consequence, superposition rules simplify the application of numerical methods \cite{PW}, and they enable us to analyse the properties of Lie systems, e.g. their periodic orbits \cite{LTS16}, without knowing their general solutions.

Since the foundational works by Vessiot and Lie \cite{LS,Ve93,Ve99}, the theory of Lie systems has described many of their geometric properties and physical applications \cite{CGM00,CGM07,Dissertationes,LS,Ve93,Ve99,PW}. For instance, the obtention of the general solution of a Lie system can be reduced to integrating a particular type of Lie systems on a Lie group \cite{CGR01,Ve93}, and many applications ranging from Smorodinsky-Winternitz oscillators to matrix Riccati equations were studied during the 80's by Winternitz and his collaborators  \cite{HWA83,BecHusWin86,BecHusWin86b,OlmRodWin86,OlmRodWin87,PW}. Moreover, geometric methods \cite{BCHLS13,LV15} have been developed to derive superposition rules without integrating complicated systems of partial and/or ordinary differential equations as needed in standard methods \cite{Dissertationes,LS,PW}. Additionally, many works have addressed the study of autonomous and non-autonomous constants of the motion, exact solutions, integrability conditions, and other interesting
properties for particular Lie systems with relevant physical and mathematical applications \cite{HWA83, BecHusWin86,BecHusWin86b,CL08M,CLR08c,Ru10,OlmRodWin86, OlmRodWin87}.

Lie succeeded in characterising non-autonomous systems of first-order ordinary differential equations admitting a superposition rule \cite{LS}. His result,  the nowadays known
as {\it Lie--Scheffers Theorem} \cite{CGM00}, can be expressed in modern geometric terms by recalling that a non-autonomous system of first-order ordinary differential equations in normal form amounts to a time-dependent vector field $X$ \cite{Dissertationes}. In fact, the Lie--Scheffers Theorem states that a system $X$ is
a Lie system if and only if $X$ takes values in a finite-dimensional Lie algebra of vector fields, a so-called {\it Vessiot--Guldberg Lie algebra} of the Lie system \cite{CS16,Dissertationes,IG17}.

The fact that every Lie system is related to a Vessiot--Guldberg Lie algebra implies that being a Lie system is rather exceptional \cite{CGL09,In65}. To extend the theory of Lie systems to a broader family of differential equations, some generalisations of this theory have been proposed. Such generalisations are aimed at investigating a kind of partial differential
equations (the hereafter referred to as {\it PDE Lie systems} \cite{CGM07,OG00}), classes of second- and higher-order ordinary differential equations (the so-called {\it HODE Lie systems} \cite{CLR08c}), types of Schr\"{o}dinger equations \cite{CarRam05b}, etcetera \cite{BCFH17}.

Among above-mentioned theories, quasi-Lie schemes \cite{CGL08,CLL08Emd,CL08Diss} represent a geometric structure aimed at the construction of a simple non-autonomous dependent change of variables mapping a class of non-autonomous systems of first-order ordinary differential equations, the {\it quasi-Lie systems}, into Lie systems. Quasi-Lie systems admit non-autonomous superposition rules, whose properties were analysed in the theory of {\it Lie families} in \cite{CGL09}. Moreover, quasi-Lie schemes have also found applications to the description of integrability conditions for Abel differential equations and other differential equations \cite{CLL08Emd,CL08Diss,CL15}.

The main goal of this work is to extend the theory of quasi-Lie schemes and quasi-Lie systems to study non-autonomous superposition rules and integrability conditions for families of systems of partial differential equations (PDEs).
We will deal mainly with integrable systems of first-order PDEs in $s$ independent coordinates $t=(t_1,\dots,t_s)$  and $n$ dependent coordinates $x=(x^1,\dots,x^n)$ on a manifold $N$, let us say
\begin{equation}\label{PDE0}
\frac{\partial x^i}{\partial t_\pi}=X^i_\pi(t,x),\qquad \pi=1,\dots,s,\quad i=1,\dots,n\,.
\end{equation}
In this paper, `an integrable system of PDEs' is one such that the {\it zero curvature condition} (ZCC) is satisfied \cite{Da89}. This ensures that (\ref{PDE0}) admits a particular solution for every initial condition, i.e. there exists a unique solution $x(t)$ to (\ref{PDE0}) satisfying $x(t_0)=x_0$ for every $x_0\in N$ (cf. \cite{FT87,ZS79}).

Although most systems of PDEs in the literature are not of the type considered in this work (cf. \cite{PZ04,PZM02}), relevant differential equations can be brought into the form (\ref{PDE0}), e.g. linear spectral problems and soliton surfaces in Lie algebras \cite{GG11,Gr16}, the Von Misses transformation for studying Navier-Stockes equations \cite{VM27,DZ98}, Toda lattices \cite{FGRSZ99}, B\"acklund transformations to analyse heat equations and modified KdV equations through Burguers equations and KdV equations respectively \cite{PZ04}, and others \cite{CZ14}.

A general and standard method for finding particular solutions or at least relevant properties for systems of PDEs (\ref{PDE0}) depends on determining a suitable \emph{bundle change of coordinates} in the bundle $\textrm{pr}_1:\mathbb{R}^s\times N\to \mathbb{R}^s$, i.e. a local diffeomorphism on $\mathbb{R}^s\times N$ of the form $\phi(t,x)=(t,y(t,x))$ such that the system (\ref{PDE0}) takes in the coordinates $(t,y)$ a more appropriate form. In this case  we say that the initial and the transformed systems are bundle $\phi$-related.
Although all systems (\ref{PDE0}) with the same values of $n$ and $s$ are locally bundle $\phi$-related (see Proposition \ref{p1}), the determination of a diffeomorphism $\phi$ to map the system (\ref{PDE0}) onto, for instance, a zero system, requires to know the general solution of (\ref{PDE0}) explicitly. This may cause one to think that the method is useless. But it is fortunately not the case, as we can look for transformations which can still be computed explicitly and yield a relation of our system to much nicer ones.

In a nutshell, we find methods to map a certain family of systems of PDEs of the form (\ref{PDE0}) onto a simpler family of bundle $\phi$-related systems of PDEs by our extension of quasi-Lie schemes to systems of PDEs.  If the transformed systems of PDEs form a family of PDE Lie systems related to a common Vessiot--Guldberg Lie algebra, then our approach ensures the existence of the so-called {\it common $t$-dependent superposition rules} for the initial family of systems of PDEs, namely a $t$-dependent function expressing the general solution of any initial system of PDEs of the form (\ref{PDE0}) in terms of a generic finite family of its particular solutions and some parameters. The initial family of PDEs then becomes a hereafter called {\it PDE Lie family}, i.e. a family of systems of first-order PDEs admitting a common $t$-dependent superposition rule.

In order to prove when a certain family of systems of PDEs can be related to a PDE Lie system with a fixed Vessiot--Guldberg Lie algebra, the hereafter called generalised PDE Lie--Scheffers Theorem (Theorem \ref{MT}) is proved. This theorem is a natural generalisation of the Lie--Scheffers Theorem characterising Lie systems \cite{CGM07,LS} that enables one to characterise {PDE Lie families}. New examples and applications of PDE Lie systems are given: reductions of Wess-Zumino-Novikov-Witten models, PDE Lie systems whose integrability conditions describe sine-Gordon equations, and others. This is specially relevant due to the lack of applications of these systems in the literature, which are rather studied only theoretically \cite{CGLS14,Dissertationes,GR95,OG00}.

%Unfortunately, the concept of $t$-dependent superposition rules does not make much sense for a single
% non-autonomous system. This is because, as explained in Proposition \ref{p1} and in \cite{CGL08}, any single non-autonomous system admits
% such a superposition rule which is as difficult to obtain as solving the system. Therefore, it only makes non-trivial sense to speak about common $t$-dependent superposition rules for a larger family of non-autonomous systems or for a system which can be made into a known Lie system with an explicitly computable change of coordinates.

%We study some particular PDE Lie
%families and we obtain common $t$-dependent superposition rules for all of them. In particular, it is shown that PDE Lie systems with a fixed Vessiot--Guldberg Lie algebra give rise naturally to PDE Lie families.

%We also provide methods to construct PDE Lie families, which is mainly accomplished by extending the whole theory of quasi-Lie schemes and quasi-Lie systems to systems of PDEs in normal form.

Finally, the theory of quasi-Lie invariants  \cite{CL15} is extended to systems of PDEs. Roughly speaking, every quasi-Lie scheme for systems of PDEs provides a naturally family of $t$-dependent transformations mapping elements of a family of systems of PDEs into new members of the same family. Quasi-Lie invariants are certain functions taking the same values on systems of PDEs related by the $t$-dependent transformations of a quasi-Lie scheme. This provides clues to know whether a certain system of PDEs can be mapped onto PDE Lie systems and other simpler systems of PDEs. It also simplifies previous techniques to obtain quasi-Lie invariants. Our methods have applications in the theory of integrability of systems of ordinary and partial differential equations, as quasi-Lie invariants can be seen as integrability conditions for systems of PDEs. A simple example illustrating the use of quasi-Lie invariants in the integrability problem of generalised Abel differential equations \cite{CR00,TR05,Ch31,Ch40}  by the so-called  {\it generalised Chiellini conditions} {\cite{HLM13,HLM14}} is developed. In this case, we prove that generalised Chiellini conditions are indeed quasi-Lie invariants for generalised Abel differential equations.

The organisation of the paper goes as follows. Section 2 describes generalised $t$-flows and $t$-dependent polyvector fields. In Section 3 we apply bundle transformations to relate systems of PDEs.  Basics about PDE Lie systems and several new applications are contained in Section 4. Section 5 discusses $t$-dependent superposition rules for families of systems of PDEs. The
generalised PDE Lie--Scheffers Theorem for common $t$-dependent superposition rules is proved in Section 6.
In Section 7 we study quasi-Lie schemes for systems of PDEs. Section 8 provides several clues in the use of quasi-Lie schemes to find integrability conditions for systems of PDEs. We detail our conclusions and further work in Section 9.

\section{Generalised flows and $t$-dependent polyvector fields}
This section presents a generalisation of the results on generalised flows and time-dependent vector fields for systems of first-order ordinary differential equations given in \cite{CGL08} to the realm of systems of PDEs. To highlight the key points of our work, we will hereafter assume that structures are smooth and globally defined. Technical details follow straightforwardly by generalising the ideas of \cite{CGL08}.

Let $\{e^1,\ldots,e^s\}$ be a basis of $\mathbb{R}^s$. Consider local coordinate systems $\{x^1,\ldots, x^n\}$ on a manifold $N$ and $\{t_1,\ldots, t_s\}$ on $\mathbb{R}^s$. We hereafter write $t=(t_1,\ldots,t_s)$ and we call $t$ time when $s=1$. The system of PDEs (\ref{PDE0}) can be represented geometrically
by the $t$-dependent polyvector field ${\bf X}:\mathbb{R}^s\times N\rightarrow TN\otimes \mathbb{R}^s$ on $N$ given by
$$
{\bf X}(t,x)=\sum_{\pi=1}^s\sum_{i=1}^nX^i_\pi(t,x)\frac{\partial}{\partial x^i}\otimes e^k\,,
$$
or equivalently as an Ehresmann connection $\nabla$ in the trivial fibration
$\tau:\R^s\times N\rightarrow\R^s\,,$
whose horizontal distribution, $H(\nabla)$, is spanned by the vector fields
$$\bar X^{[k]}=\partial_{t_\pi}+X_k=\partial_{t_\pi}+\sum_{i=1}^nX^i_\pi\partial_{x^i}\,,
\quad \pi=1,\dots,s\,,
$$
where $\partial_x$ stands for $\partial/\partial x$ and the $X^i_\pi$, with $i=1,\ldots,n$ and $\pi=1,\ldots,s$,  are functions on $\mathbb{R}^s\times N$.
Another useful realisation is to view $\bX$ as a vector $(X_1,\dots,X_s)$ whose entries are
{\it $t$-dependent vector fields on $N$}, namely mappings $X_\pi:\mathbb{R}\times N\rightarrow TN$ such that every $X_i(t,\cdot):N\rightarrow TN$ is a standard vector field on $N$ for every $t\in\mathbb{R}^s$.

By the \emph{autonomisation} of ${\bf X}$, we understand the polyvector field $\bar\bX=(\bar X^{[1]},\dots,\bar X^{[s]})$ on $\R^s\times N$. It is also {worth} noting that every $t$-dependent polyvector field ${\bf X}$ on $N$ amounts to a $t$-parametrised family of polyvector fields ${\bf X}_t:x\in N\mapsto {\bf X}(t,x)\in TN\otimes \mathbb{R}^s$. We say that ${\bf X}=(X_1,\ldots,X_s)$ takes values in a Lie algebra of vector fields if all the vector fields $(X_1)_t,\ldots,(X_s)_t$, with $t\in\mathbb{R}$, do so.

% is defined on a neighbourhood of $\{0\}\times N$, while the polyvector field ${\bf X}_t:x\in N\mapsto {\bf X}(t,x)\in TN\otimes \mathbb{R}^s$ is then
% defined on the open set $N_t^\bX\subset N$ given by
% $$ N_t^\bX=\{x_0\in N\mid (t,x_0)\in \mathbb{R}^s\times N\}\,,\qquad \forall t\in\mathbb{R}^s.$$
% If $N_t^\bX=N$ for all $t\in
% \mathbb{R}^s$, then ${\bf X}$ is called a {\it global $t$-dependent polyvector field}.

A particular solution to the system of PDEs (\ref{PDE0}) is given by a map $t\in \mathbb{R}^s\mapsto \gamma(t)\in N$ such that
\begin{equation}\label{e1}
\frac{\partial\gamma}{\partial t_\pi}(t)= X_\pi(t, \gamma(t)),\qquad \pi= 1,\dots,s .
\end{equation}
We hereafter assume that system (\ref{e1}) is {\it integrable}, namely the components of the $t$-dependent polyvector field ${\bf X}$ satisfy the so-called {\it zero curvature condition} (ZCC), namely \cite{Da89}
\begin{equation}\label{IntCon}
[\bar X^{[\pi]},\bar X^{[\nu]}]=0\,,\qquad \pi,\nu=1,\ldots,s.
\end{equation}
This amounts to the fact that the connection $\nabla$ is flat, i.e. the horizontal distribution $H(\nabla)$ is
integrable giving rise to a foliation $\mathfrak{F}(\nabla)$ on $\mathbb{R}^s\times N$. The leaves of $\mathfrak{F}(\nabla)$ amount to particular solutions of (\ref{PDE0}). When the system of PDEs (\ref{PDE0}) associated with the $t$-dependent polyvector field ${\bf X}$ is integrable, we say that ${\bf X}$ is {\it intregrable}.

The ZCC condition (\ref{IntCon}) ensures that there exists for each $x_0\in N$ and $t_0\in \mathbb{R}^s$ a unique maximal
solution $\gamma_\bX^{x_0}(t)$  of system (\ref{e1}) with the initial value $x_0$ at $t_0$, e.g. satisfying
$\gamma_\bX^{x_0}(t_0)=x_0$. %This particular solution is defined for $t$'s within an open neighbourhood of $t_0$.
Each solution of (\ref{e1}) is called an {\it integral submanifold} or simply a {\it particular solution} of ${\bf X}$.%The particular solution $\gamma^{x_0}_{\bf X}(t)$ is a {\it global  solution} if%
%$\gamma_{\bf X}^{x_0}(t)$ is defined for all $t\in \mathbb{R}^s$.

A typical method of finding particular solutions and/or their properties relies on determining an appropriate \emph{bundle isomorphism}  {in the bundle $\textrm{pr}_1:\mathbb{R}^s\times N\to \mathbb{R}^s$}, i.e. a local diffeomorphism {$\phi: (t,x)\in  \mathbb{R}^s\times N\to (t,y(t,x))\in \mathbb{R}^s\times N$,} such that $\phi$ transforms the system of PDEs (\ref{PDE0}) into  a new system ${\bf X}'$ of PDEs taking a nicer form we can deal with. Equivalently, $\phi$ can be understood as a bundle change of variables mapping the system of PDEs (\ref{PDE0}) into a more appropriate form. If $\phi$ maps ${\bf X}$ onto ${\bf X}'$, then we will say that the systems ${\bf X}$ and ${\bf X}'$ are {\it bundle $\phi$-related}.  However, one has the following discouraging proposition.

\begin{proposition}\label{p1} Any two integrable systems of PDEs related to $t$-dependent polyvector fields ${\bf X}_1$ and ${\bf X}_2$ are bundle $\phi$-related  for some bundle isomorphism $\phi:\mathbb{R}^s\times N\rightarrow \mathbb{R}^s\times N$.
\end{proposition}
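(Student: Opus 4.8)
The plan is to prove the stronger fact that \emph{every} integrable system is bundle-isomorphic to the trivial (zero) system $\bX_0=0$. The proposition then follows immediately: writing $\Phi_{\bX}$ for the bundle isomorphism relating $\bX_0$ to an integrable system $\bX$ (constructed below), and denoting by $\bar X_i^{[\pi]}$ the horizontal fields of the autonomisation of $\bX_i$, the composition $\phi:=\Phi_{\bX_2}\circ\Phi_{\bX_1}^{-1}$ is again a bundle isomorphism, and it relates $\bX_1$ to $\bX_2$ since $\phi_*\bar X_1^{[\pi]}=(\Phi_{\bX_2})_*(\Phi_{\bX_1}^{-1})_*\bar X_1^{[\pi]}=(\Phi_{\bX_2})_*\partial_{t_\pi}=\bar X_2^{[\pi]}$.

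First I would fix a reference point $t_0\in\R^s$. For an integrable $t$-dependent polyvector field $\bX$, the ZCC (\ref{IntCon}) makes the horizontal distribution $H(\nabla)$ integrable, and the standing assumption that solutions are globally defined provides, for each $x_0\in N$, a unique maximal solution $\gamma_\bX^{x_0}$ of (\ref{e1}) on all of $\R^s$ with $\gamma_\bX^{x_0}(t_0)=x_0$. Because the leaves of $\mathfrak{F}(\nabla)$ are transverse to the fibres $\{t\}\times N$ and $\R^s$ is contractible (so the flat connection $\nabla$ carries no holonomy), each leaf is the graph of exactly one such $\gamma_\bX^{x_0}$ and meets the fibre over $t_0$ in the single point $(t_0,x_0)$. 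This identifies the leaf space of $\mathfrak{F}(\nabla)$ with $N$.

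Next I would introduce the \emph{straightening map}
$$\Phi_\bX:\R^s\times N\to\R^s\times N,\qquad \Phi_\bX(t,x_0)=\bigl(t,\gamma_\bX^{x_0}(t)\bigr).$$
By construction $\Phi_\bX$ preserves the projection onto $\R^s$, so it is a bundle morphism; smooth dependence of the solutions of an integrable system on their initial data makes it smooth, and integrating backwards from a point $(t,y)$ to the fibre over $t_0$ yields a smooth inverse, so $\Phi_\bX$ is a bundle isomorphism. To see that it relates $\bX_0$ to $\bX$, recall that the solutions of $\bX_0$ are the constant maps $t\mapsto x_0$, with graphs $\R^s\times\{x_0\}$ and horizontal lifts the $\partial_{t_\pi}$. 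Differentiating the curve $s\mapsto\Phi_\bX(t+se^\pi,x_0)$ at $s=0$ gives $\partial_{t_\pi}+\sum_{i=1}^n X^i_\pi(t,\gamma_\bX^{x_0}(t))\partial_{x^i}$, which is exactly $\bar X^{[\pi]}$ at the image point; hence $(\Phi_\bX)_*\partial_{t_\pi}=\bar X^{[\pi]}$, so $\Phi_\bX$ carries $H(\nabla_0)$ onto $H(\nabla)$ and $\bX_0$, $\bX$ are bundle $\Phi_\bX$-related. Applying this to $\bX_1$ and $\bX_2$ and composing as above completes the argument.

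I expect the only genuinely delicate step to be establishing that $\Phi_\bX$ is a \emph{global} diffeomorphism rather than merely a local one. This rests on global existence, uniqueness, and smooth dependence on initial data of the solutions of (\ref{e1}) over the whole of $\R^s$; the ZCC (equivalently, flatness of $\nabla$) together with completeness is precisely what removes the potential holonomy obstruction that could prevent a leaf from being a global graph over $\R^s$, so under the paper's standing global and smoothness assumptions the construction goes through. Should one want only the local statement emphasised in the definition of bundle isomorphism, the same computation restricted to a neighbourhood of $(t_0,x_0)$ suffices verbatim.
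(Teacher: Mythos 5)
Your proposal is correct and follows essentially the same route as the paper's own proof: reduce to the zero system $\mathbf{X}_0=0$ via the straightening map $(t,x_0)\mapsto(t,\gamma_{\mathbf{X}}^{x_0}(t))$ built from the solutions guaranteed by the ZCC, then compose the two resulting bundle isomorphisms as $\Phi_{\mathbf{X}_2}\circ\Phi_{\mathbf{X}_1}^{-1}$. The only differences are presentational: you verify the pushforward identity $(\Phi_{\mathbf{X}})_*\partial_{t_\pi}=\bar X^{[\pi]}$ explicitly and flag the global-versus-local issue, both of which the paper leaves implicit under its standing smoothness and global-definition assumptions.
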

\begin{proof}
We will show first that the polyvector field ${\bf X}_0=0$ on $N$, whose corresponding connection $\nabla_0$ is generated by the $\partial_{t^\pi}$ with $\pi=1,\dots,s$, can always be mapped onto an arbitrary system of PDEs ${\bf X}$ of the form (\ref{PDE0}). Since ${\bf X}$ satisfies the ZCC condition by assumption, there exists  a particular solution $t\mapsto y(t,x)=\gamma_\bX^{x}(t)$  of ${\bf X}$, i.e. a leaf of $\mathfrak{F}(\nabla)$, for each boundary condition  $\gamma_\bX^{x}(0)=x\in N$. The map $\phi(t,x)=(t,y(t,x))$ maps the leaf of $\mathfrak{F}(\nabla_0)$  going through $(0,x)$, i.e. the horizontal plane $\mathbb{R}^s\times \{x\}$, onto the unique leaf of $\mathfrak{F}(\nabla)$ going through $(0,x)$. This induces a bundle isomorphism $\phi:\mathbb{R}^s\times N\rightarrow \mathbb{R}^s\times N$ mapping the particular solutions of ${\bf X}_0$ onto ${\bf X}$ and vice versa.

In view of the above, ${\bf X}_0$ can be mapped onto ${\bf X}_1$ and ${\bf X}_2$ through two $\phi$-bundle isomorphisms $\phi_1,\phi_2$ respectively, and then $\phi_2\circ \phi^{-1}_1$ maps ${\bf X}_1$ onto ${\bf X}_2$.
\end{proof}

Although the situation is very simple geometrically, the explicit description of the bundle isomorphism $\phi$ in Proposition \ref{p1} requires to know all particular solutions of (\ref{PDE0}). One can therefore think that the method is useless. This is fortunately not true, as we can look for transformations that can still be computed explicitly and yield a relation of our system to a nicer one (although non-zero).
\begin{example}\label{e1a}
Let us illustrate how bundle $\phi$-related systems appear in an integration problem for the so-called generalised Abel differential equations of the first kind \cite{Al07,Bo05,TR05,Ch40,SP03,SR82,ZT09}. In particular, consider the generalised Abel differential equation of the form
\begin{equation}\label{InAbe}
\frac{dx}{dt}=a(t)+c(t)x+f(t)x^{\epsilon-1}+g(t)x^\epsilon,\qquad x\in \mathbb{R},\quad \epsilon\in\mathbb{R}\backslash\{1\},\quad t\in \mathbb{R},
\end{equation}
for some $t$-dependent functions $a(t),c(t),f(t),g(t)$ satisfying that $f(t),g(t)\neq 0$ and the so-called {\it generalised Chiellini's condition} (see \cite[Theorem 1]{HLM13} and  \cite[Lemma 2]{HLM14})
\begin{equation}\label{GCC}
\frac{g^{\epsilon-1}(t)}{f^\epsilon(t)}e^{\int c(t)dt}\frac{d}{dt}\left[\frac{f(t)e^{-\int c(t)dt}}{g(t)}\right]=-k_1\in\mathbb{R},\qquad \frac{g^{\epsilon-1}(t)}{f^{\epsilon}(t)}a(t)=k_2\in\mathbb{R}.
\end{equation}
If $c(t)=a(t)=0$, then above conditions reduce to the standard {\it Chiellini's condition} \cite{Ch31,Ch40,MR}.
Under the assumptions (\ref{GCC}), the $t$-dependent bundle change of variables
\begin{equation}\label{Change}
(t,y(t))=\left(t,\frac{f(t)}{g(t)}x(t)\right),
\end{equation}
i.e.
the bundle isomorphism
$$
\phi:(t,x)\in \mathbb{R}\times \mathbb{R}\mapsto \left(t,\frac{f(t)}{g(t)}x\right)\in \mathbb{R}\times \mathbb{R}
$$
maps the generalised Abel differential equation (\ref{InAbe}) onto the simpler one
\begin{equation}\label{OutAbe}
\frac{d{y}}{dt}=\frac{a(t)g(t)}{f(t)}\left[1+\frac{1}{k_2}(y^\epsilon+y^{\epsilon-1}+k_1y)\right],
\end{equation}
whose general solution can be obtained by making a $t$-dependent reparametrisation and a quadrature. In other words, our generalised Abel differential equation (\ref{InAbe}) is bundle $\phi$-related to the generalised Abel differential equation (\ref{OutAbe}), whose solution is straightforward. The key of the previous procedure is to find a geometric method to derive the $t$-dependent change of variables (\ref{Change}). This procedure also suggests that this can be done by assuming some relations between the functions $a(t),c(t),f(t),$ and $g(t)$.
\end{example}

A {\it generalised $t$-flow} $g$ on $N$ with foot point $t_0$ is a $t$-dependent family $\{g_t\}_{t\in\mathbb{R}^s}$ of
diffeomorphisms on $N$ such that
$g_{t_0}=\text{id}_N$. Alternatively, $g$ can be understood as a curve in the group of diffeomorphisms $g:t\in
\R^s\mapsto g_t\in\Diff(N)$ with
$g_{t_0}=\text{id}_N$. If not otherwise stated, we will assume that generalised $t$-flows have foot point $t_0$.
\color{black}

%In this way, the collection of all maximal particular solutions of the system (\ref{e1}) gives rise to a {\it generalised $t$-flow} $g$ on $N$ by assuming that $g_t(x)=y(t,x)$, where $y(t,x)$ is the particular solution to the system of PDEs (\ref{PDE0}) with initial condition $y(0,x)=x$.

It is opportune to {\it autonomise} the generalised $t$-flow $g$ giving rise to a single local
diffeomorphism $\bar{g}(t,x)=(t,g(t,x))$ on $\mathbb{R}^s\times N$. Then, every integrable $t$-dependent polyvector field ${\bf X}$ on $N$ induces a generalised $t$-flow $g^{\bf X}$ defined by
%\begin{equation}\label{e3}
$g_t^{\bX}(x_0)=\gamma_{\bX}^{x_0}(t)\,,$
%\end{equation}
where $\gamma_{\bX}^{x_0}(t)$ stands for the particular solution of ${\bf X}$ with initial condition $\gamma_{\bX}^{x_0}(t_0)=x_0\in N$.
If we write $g=g^{\bf X}$, then
\begin{equation}\label{e4}
(X_\pi)_t(x):= X_\pi(t,x)=\frac{\partial g_t}{\partial t_\pi}\circ g_t^{-1}(x)\,,\qquad \pi=1,\ldots,s,
\end{equation}
where $ (X_\pi)_t$ and $\partial g_t/\partial t_\pi$ are understood as maps from $N$ into $TN$. The equation (\ref{e4}) defines a one-to-one correspondence between generalised
$t$-flows at a fixed foot point $t_0$ and integrable $t$-dependent polyvector fields on $N$. This result is summarised in the following theorem, which generalises Theorem 1 in \cite{CGL08}.
%However, $\partial g_t/\partial t_\pi\circ g_t^{-1}$ and $ {\bf X}_t$ coincide in a
%neighbourhood of any point for sufficiently small $t$. One can simply say that the {\it germs} of the $X_k$ and
%$\partial g_t/\partial t_\pi\circ g_t^{-1}$ coincide, where the germ in our context is understood as the class of corresponding
%objects that coincide on a neighbourhood of $\{0\}\times N$ in $\mathbb{R}^s\times N$.

%Indeed, for a given $g$, the corresponding $t$-dependent polyvector field is defined by (\ref{e4}). Conversely,
%for a given ${\bf X}$, the equation (\ref{e4}) determines the germ of the generalised $t$-flow $g(t,x)$ uniquely, as for
%each $x=x_0$ and for small $t$ equation (\ref{e4}) implies that $t\mapsto g(t,x_0)$ is the solution of the
%system defined by ${\bf X}$ with the initial value $x_0$. In this way we get the following
\begin{theorem}\label{t1} Equation (\ref{e4}) defines a
one-to-one correspondence between the germs of generalised $t$-flows at a fixed foot point and the germs of integrable $t$-dependent polyvector fields
on $N$.% For compact $N$, this correspondence reduces to a one-to-one correspondence between global
%integrable $t$-dependent polyvector fields and global generalised $t$-flows.
\end{theorem}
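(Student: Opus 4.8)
The plan is to establish the claimed one-to-one correspondence by constructing explicit maps in both directions and verifying they are mutually inverse at the level of germs. First I would fix the foot point $t_0$ and show that equation (\ref{e4}) sends a germ of a generalised $t$-flow $g$ to a well-defined germ of a $t$-dependent polyvector field. Given $g$, the components $(X_\pi)_t = \partial g_t/\partial t_\pi \circ g_t^{-1}$ are each, for fixed $t$, genuine vector fields on $N$ (since $g_t$ is a diffeomorphism), so $\mathbf{X}=(X_1,\ldots,X_s)$ is a $t$-dependent polyvector field. The crucial point here is that this $\mathbf{X}$ is automatically \emph{integrable}: because $g$ is a single-valued family of diffeomorphisms, the submanifolds $t\mapsto g_t(x_0)$ foliate $\mathbb{R}^s\times N$ consistently, which forces the ZCC (\ref{IntCon}) to hold. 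I would verify this by computing $[\bar X^{[\pi]},\bar X^{[\nu]}]$ and showing it vanishes as a consequence of the equality of mixed partials $\partial^2 g_t/\partial t_\pi\partial t_\nu = \partial^2 g_t/\partial t_\nu\partial t_\pi$.

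Next I would construct the inverse map. Given an integrable $\mathbf{X}$, the ZCC guarantees (as recalled in the excerpt) a unique maximal solution $\gamma_{\mathbf{X}}^{x_0}(t)$ through each $x_0$ at $t_0$, and I would set $g^{\mathbf{X}}_t(x_0)=\gamma_{\mathbf{X}}^{x_0}(t)$. One must check that each $g^{\mathbf{X}}_t$ is a local diffeomorphism with $g^{\mathbf{X}}_{t_0}=\mathrm{id}_N$; the former follows from smooth dependence of solutions on initial conditions together with the existence of the reverse flow, and the latter is immediate from the initial condition. Differentiating $\gamma_{\mathbf{X}}^{x_0}(t)$ with respect to $t_\pi$ and using (\ref{e1}) recovers exactly relation (\ref{e4}), confirming that the two constructions are inverse to each other.

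The passage to germs is what makes all the local existence, uniqueness, and smooth-dependence statements legitimate: solutions of (\ref{e1}) are only guaranteed on a neighbourhood of $t_0$, and diffeomorphism-property of $g^{\mathbf{X}}_t$ likewise holds only near $t_0$, so I would phrase the whole correspondence in terms of germs at the fixed foot point, exactly as in the statement. The argument is the direct $s$-variable generalisation of Theorem~1 in \cite{CGL08}, where the only genuinely new ingredient relative to the ordinary-differential-equation case is the integrability constraint.

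The main obstacle, and the step I would spend the most care on, is the equivalence between integrability of $\mathbf{X}$ (the ZCC) and the fact that $g^{\mathbf{X}}$ is a genuine \emph{single-valued} family of diffeomorphisms. For $s=1$ this is vacuous, but for $s>1$ the map $x_0\mapsto \gamma_{\mathbf{X}}^{x_0}(t)$ is only well-defined when the leaves of $\mathfrak{F}(\nabla)$ form a consistent foliation, which is precisely the content of the ZCC; conversely, starting from a $t$-flow $g$ one must confirm that the associated $\mathbf{X}$ lands in the integrable class rather than the larger class of arbitrary $t$-dependent polyvector fields. I would therefore treat this equivalence as the heart of the proof, showing both implications explicitly via the Frobenius-type condition on $H(\nabla)$ and the commutativity of mixed partial derivatives of $g_t$, while relegating the smooth-dependence and inverse-function arguments to brief remarks, since they are standard.
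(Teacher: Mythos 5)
Your proposal is correct and takes essentially the same approach as the paper, which in fact states Theorem \ref{t1} without a separate proof, presenting it as a summary of the preceding construction (solutions of an integrable $\mathbf{X}$ yield the flow $g^{\mathbf{X}}$, equation (\ref{e4}) recovers $\mathbf{X}$ from $g$, and uniqueness of solutions under the ZCC makes the two assignments mutually inverse at the level of germs); your write-up simply fills in the details the paper leaves implicit. In particular, you correctly isolate the only genuinely new point compared with Theorem 1 of \cite{CGL08}, namely that a polyvector field obtained via (\ref{e4}) from a single-valued family of diffeomorphisms automatically satisfies the ZCC (\ref{IntCon}) --- equivalently, the commuting fields $\partial_{t_\pi}$ push forward under the autonomisation $\bar{g}$ to the $\bar{X}^{[\pi]}$, so their brackets vanish.
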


Any two generalised $t$-flows $g$ and $h$ with a common  foot point can be composed: by definition $(g\circ h)_t=g_t\circ h_t$. As generalised $t$-flows correspond to integrable $t$-dependent polyvector fields, this gives rise to an action  of a generalised $t$-flow $h$ on an integrable $t$-dependent polyvector field ${\bf X}$, leading to the integrable $t$-dependent polyvector field $h_\di {\bf X}$ defined  by the equation
\begin{equation*}
g^{h_\di {\bf X}}=h\circ g^{\bf X}\,. \label{e5}
\end{equation*}
A short calculation leads to obtain a more explicit form of this action
% $$
% (h_\di {\bf X})_t=\!\sum_{\pi=1}^s\frac{\partial (h\circ g^{\bf X})_t}{\partial t_\pi}\circ (h\circ g^{\bf X})_t^{-1}\otimes e^k\!\!=\!\!\sum_{\pi=1}^s\left[ \pd{h_t}{t_\pi}\circ g_t^{\bf X}+Th_t\left( \pd{g_t^{\bf X}}{t_\pi}
% \right)\right]\circ (g^{\bf X})_t^{-1}\circ h_t^{-1}\otimes e^k,
% $$
% Therefore
% $$(h_\di {\bf X})_t=\sum_{\pi=1}^s\left[\pd{
% h_t}{t_\pi}\circ h_t^{-1}+Th_t\left (\pd{g_t^{\bf X}}{t_\pi}\circ (g^{\bf X})_t^{-1}\right)\circ h_t^{-1}\right]\otimes e^k,
% $$
\begin{equation}\label{e6}
(h_\di {\bf X})_t=\sum_{\pi=1}^s\left(\pd{h_t}{t_\pi}\circ h_t^{-1}+h_{t*}(X_\pi)_t\right)\otimes e^\pi,
\end{equation}
where $h_{t*}$, for each $t\in\mathbb{R}^s$, is the standard action of the diffeomorphism $h_t$ on vector fields. Similarly, (\ref{e6})
can be rewritten as an action of integrable $t$-dependent polyvector fields on integrable $t$-dependent polyvector fields:
\begin{equation}\label{e7} (g^{{\bf Y}}_\di {\bf X})_t={\bf Y}_t+(g_t^{\bf Y})_*({\bf X}_t),
\end{equation}
where $(g_t^{\bf Y})_*({\bf X}_t)=\sum_{\pi=1}^s[(g_t^{\bf Y})_*(X_\pi)_t]\otimes e^\pi$.
%For global $t$-dependent polyvector fields on compact manifolds,
The latter defines a group structure in
$t$-dependent polyvector fields relative to the product ${\bf Y}\star {\bf X}=g_\di^{\bf Y}{\bf X}$. %This is an infinite-dimensional analog of the group structure on paths in a
%finite-dimensional Lie algebra which has been used as a source for a nice construction of the corresponding
%Lie group in \cite{DK00}.
Since every generalised $t$-flow has inverse $(g^{-1})_t=(g_t)^{-1}$, the
generalised $t$-flows, or better to say, the corresponding germs, form a group and the formula (\ref{e7}) allows
us to compute the $t$-dependent polyvector field ${\bf X}_t^{-1}$ associated with the
inverse. It is the $t$-dependent polyvector field
\begin{equation*}
{\bf X}_t^{-1}=-(g^{\bf X}_t)^{-1}_*({\bf X}_t)\,.
\end{equation*}
For $t$-independent polyvector fields ${\bf X}$, namely ${\bf X}_t={\bf X}_0$ for all $t\in\mathbb{R}$,  we have $(g^{\bf X}_t)_* {\bf X}={\bf X}$ and therefore
${\bf X}^{-1}=-{\bf X}\,.$
Hence, the integral sections of $h_\di {\bf X}$ are of the form $h_t(\gamma(t))$, where $\gamma(t)$
is any integral section of ${\bf X}$. We can summarise our observation as follows.
\begin{theorem}\label{t2} The equation (\ref{e6}) defines a group action
of generalised $t$-flows on integrable $t$-dependent polyvector fields in the sense that
\begin{equation}\label{NA}
(g\circ h)_\di {\bf X}=g_\di(h_\di {\bf X})\,\qquad \textrm{\rm id}_\di{\bf X}={\bf X},
\end{equation}
where ${\rm id}_t={\rm id}_N$ for every $t\in \mathbb{R}^s$.
The integral sections of $h_\di {\bf X}$ are of the form $h_t(\gamma(t))$, where $\gamma(t)$ is an arbitrary integral
section for ${\bf X}$.
\end{theorem}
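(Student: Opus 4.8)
The plan is to reduce both assertions to the defining relation $g^{h_\di{\bf X}}=h\circ g^{\bf X}$ and to the bijective correspondence established in Theorem \ref{t1}. Since the explicit expression \eqref{e6} was obtained from this defining relation, the two carry the same information, and it is cleanest to argue at the level of generalised $t$-flows, where the group operation is the pointwise composition of diffeomorphisms, $(g\circ h)_t=g_t\circ h_t$. The crucial point I would exploit is that, by Theorem \ref{t1}, an integrable $t$-dependent polyvector field is uniquely determined by the germ of its associated generalised $t$-flow; hence any identity between polyvector fields can be verified by checking the corresponding identity between $t$-flows. First I would note that $h_\di{\bf X}$ is well defined and automatically integrable: as already observed, $h\circ g^{\bf X}$ is again a generalised $t$-flow with foot point $t_0$ (both factors equal $\mathrm{id}_N$ at $t_0$), so Theorem \ref{t1} assigns to it a unique integrable $t$-dependent polyvector field, which is precisely $h_\di{\bf X}$.

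For the two group-action axioms I would argue as follows. For the unit, the identity $t$-flow gives $g^{\mathrm{id}_\di{\bf X}}=\mathrm{id}\circ g^{\bf X}=g^{\bf X}$, so injectivity in Theorem \ref{t1} forces $\mathrm{id}_\di{\bf X}={\bf X}$. For compatibility, I would apply the defining relation twice: on one side $g^{(g\circ h)_\di{\bf X}}=(g\circ h)\circ g^{\bf X}$, and on the other $g^{g_\di(h_\di{\bf X})}=g\circ g^{h_\di{\bf X}}=g\circ(h\circ g^{\bf X})$. Because pointwise composition of diffeomorphisms is associative, $(g\circ h)\circ g^{\bf X}=g\circ(h\circ g^{\bf X})$, and a final appeal to the injectivity of the correspondence yields $(g\circ h)_\di{\bf X}=g_\di(h_\di{\bf X})$.

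For the statement on integral sections I would use the flow interpretation directly. By uniqueness of solutions under the ZCC, every integral section $\gamma$ of ${\bf X}$ is of the form $\gamma(t)=g_t^{\bf X}(\gamma(t_0))$, whence
\begin{equation*}
h_t(\gamma(t))=h_t\bigl(g_t^{\bf X}(\gamma(t_0))\bigr)=(h\circ g^{\bf X})_t(\gamma(t_0))=g_t^{h_\di{\bf X}}(\gamma(t_0)),
\end{equation*}
which is exactly the integral section of $h_\di{\bf X}$ taking the value $\gamma(t_0)$ at $t_0$. As an independent check one can differentiate $\sigma(t):=h_t(\gamma(t))$ directly: writing $h$ as the map $(t,y)\mapsto h_t(y)$, the chain rule gives $\partial\sigma/\partial t_\pi=(\partial h_t/\partial t_\pi)(\gamma(t))+(Dh_t)_{\gamma(t)}\bigl((X_\pi)_t(\gamma(t))\bigr)$; substituting $\gamma(t)=h_t^{-1}(\sigma(t))$ turns the first summand into $(\partial h_t/\partial t_\pi\circ h_t^{-1})(\sigma(t))$ and the second into $(h_{t*}(X_\pi)_t)(\sigma(t))$, reproducing $(h_\di{\bf X})_\pi(t,\sigma(t))$ in agreement with \eqref{e6}.

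The arithmetic here is routine; the genuine content sits in Theorem \ref{t1}, which I am assuming. The only points requiring real care are the bookkeeping at the level of germs and the verification that the class of generalised $t$-flows with the fixed foot point $t_0$ is closed under composition and inversion, so that the action is defined on the intended domain and sends integrable polyvector fields to integrable ones. Since there is no deep obstacle, I expect the only delicate step to be aligning the direct differentiation of $\sigma(t)=h_t(\gamma(t))$ with \eqref{e6} term by term, because one must carefully separate the $t$-dependence coming from the flow $h_t$ from that coming from the moving base point $\gamma(t)$; once the pushforward $h_{t*}$ is evaluated at the correct point, the matching is immediate.
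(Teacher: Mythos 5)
Your proof is correct and takes essentially the same route as the paper: Theorem \ref{t2} is stated there as a summary of the preceding observations, which—exactly as you do—derive everything from the defining relation $g^{h_\di{\bf X}}=h\circ g^{\bf X}$, the group structure of generalised $t$-flows under pointwise composition, and the one-to-one correspondence of Theorem \ref{t1}. Your closing chain-rule computation for $\sigma(t)=h_t(\gamma(t))$ is precisely the ``short calculation'' by which the paper obtains (\ref{e6}) from the defining relation.
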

The action of generalised $t$-flows on integrable $t$-dependent polyvector fields given by  (\ref{NA}) can be also defined elegantly via autonomisations. This gives rise to the Theorem \ref{t3}, whose proof is straightforward.

%\begin{definition}
%The {\it autonomisation} of the $t$-dependent polyvector field ${\bf Y}$ on $N$, let us say ${\bf Y}=\sum_{\pi=1}^sY_k\otimes e^k$, is the
%vector field $\bar {\bf Y}$  on $\mathbb{R}^s\times N$ defined by
%$$
%\bar{\bf Y}:=\sum_{\pi=1}^s\bar{\bf Y}_k\otimes e^k:=\sum_{\pi=1}^s\left(\frac{\partial}{\partial t_\pi}+\sum_{i=1}^nY_k^i(t,x)\pd{}{x^i}\right)\otimes e^k.
%$$
%\end{definition}

\begin{theorem}\label{t3} For any generalised $t$-flow $h$ and any
integrable $t$-dependent polyvector field ${\bf X}$ on $N$, the standard action  of the
diffeomorphism $\bar{h}$  on the $t$-dependent polyvector field $\bar{\bf X}$ is the autonomisation of the $t$-dependent polyvector field $h_\di {\bf X}$, i.e.
$$
\bar{h}_*(\bar{\bf X})=\overline{h_\di {\bf X}}\,.
$$
\end{theorem}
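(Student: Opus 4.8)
The plan is to prove the stated equality of $t$-dependent polyvector fields on $\R^s\times N$ componentwise. Since each side is an $s$-tuple of vector fields indexed by $\pi=1,\dots,s$, it suffices to show, for every $\pi$, that the pushforward of the $\pi$-th generator $\bar X^{[\pi]}=\partial_{t_\pi}+X_\pi$ of the autonomisation $\bar{\bf X}$ under the diffeomorphism $\bar h$ coincides with the $\pi$-th generator of $\overline{h_\di{\bf X}}$. Reading the explicit action (\ref{e6}), the latter is $\partial_{t_\pi}+(h_\di{\bf X})_{t,\pi}$, where $(h_\di{\bf X})_{t,\pi}=\pd{h_t}{t_\pi}\circ h_t^{-1}+h_{t*}(X_\pi)_t$. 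Thus the whole theorem reduces to the single identity
$$
\bar h_*\,\bar X^{[\pi]}=\partial_{t_\pi}+\pd{h_t}{t_\pi}\circ h_t^{-1}+h_{t*}(X_\pi)_t,\qquad \pi=1,\dots,s.
$$

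First I would compute the left-hand side in the coordinates $(t,x)$ on the source and $(t,y)$ on the target, where $\bar h(t,x)=(t,h_t(x))$, so that $y^j=h^j(t,x)$. The Jacobian of $\bar h$ is block lower triangular: it is the identity on the $\R^s$-block, it has the mixed entries $\partial y^j/\partial t_\pi=\partial h^j/\partial t_\pi$, and the fibre entries $\partial y^j/\partial x^i=\partial h^j/\partial x^i$. The generator $\bar X^{[\pi]}$ has components $\delta_{\nu\pi}$ along $\partial_{t_\nu}$ and $X^i_\pi$ along $\partial_{x^i}$. Applying the standard pushforward formula and evaluating the result at $x=h_t^{-1}(y)$, the base components are unchanged — they remain $\partial_{t_\pi}$, because $\bar h$ is the identity on the $\R^s$-factor and $X_\pi$ is $\tau$-vertical — while the fibre component along $\partial_{y^j}$ becomes $\left[\partial h^j/\partial t_\pi+\sum_i(\partial h^j/\partial x^i)\,X^i_\pi\right]_{x=h_t^{-1}(y)}$.

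The last step is to recognise these two summands. The first, $(\partial h^j/\partial t_\pi)(t,h_t^{-1}(y))$, is exactly the $j$-th component of $\pd{h_t}{t_\pi}\circ h_t^{-1}$, and the second, $\sum_i(\partial h^j/\partial x^i)(t,h_t^{-1}(y))\,X^i_\pi(t,h_t^{-1}(y))$, is the $j$-th component of $h_{t*}(X_\pi)_t$ by the usual change-of-variables formula for the pushforward of a vector field on $N$. Their sum is the $\pi$-th fibre part of $h_\di{\bf X}$, which establishes the displayed identity and hence the theorem. I expect the only real obstacle to be the bookkeeping of base points: for fixed $x$ the object $\pd{h_t}{t_\pi}$ is a tangent vector based at $h_t(x)$, not at $x$, so it becomes a genuine vector field on $N$ only after composition with $h_t^{-1}$, and one must keep consistent track of evaluating all pushed-forward quantities at $x=h_t^{-1}(y)$.

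A cleaner, coordinate-free variant would avoid even this computation. Because $\bar h$ covers the identity on $\R^s$, i.e. $\tau\circ\bar h=\tau$, and each $X_\pi$ is $\tau$-vertical, one gets $\tau_*(\bar h_*\bar X^{[\pi]})=\partial_{t_\pi}$; hence $\bar h_*\bar{\bf X}$ is itself the autonomisation of some integrable $t$-dependent polyvector field ${\bf Z}$ (integrability survives since $\bar h_*$ preserves Lie brackets and ${\bf X}$ satisfies the ZCC (\ref{IntCon})). To identify ${\bf Z}$, note that $\bar h$ carries the leaves $\{(t,\gamma(t))\}$ of $\mathfrak F(\nabla)$ to the graphs of $t\mapsto h_t(\gamma(t))$, which by Theorem \ref{t2} are precisely the integral sections of $h_\di{\bf X}$; since an autonomisation is determined by its foliation together with the normalisation that its $\pi$-th generator projects to $\partial_{t_\pi}$, this forces $\bar{\bf Z}=\overline{h_\di{\bf X}}$, i.e. ${\bf Z}=h_\di{\bf X}$.
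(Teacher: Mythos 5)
Your proof is correct. The paper actually states Theorem \ref{t3} without proof, declaring it straightforward, and your componentwise computation --- pushing forward each generator $\bar X^{[\pi]}$ through the block-triangular Jacobian of $\bar h$ and recognising the resulting fibre part as the $\pi$-th component $\pd{h_t}{t_\pi}\circ h_t^{-1}+h_{t*}(X_\pi)_t$ of (\ref{e6}) --- is exactly the straightforward verification the authors have in mind, with your coordinate-free variant via $\tau\circ\bar h=\tau$ and Theorem \ref{t2} a correct (and arguably cleaner) alternative.
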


The composition of two generalised $t$-flows $g$ and $h$ with different foot points can be defined as in the case of generalised $t$-flows with the same foot point. Although the result is a well-defined family of diffeomorphisms $\{(g\circ h )_t\}_{t\in\mathbb{R}^s}$, this family may not be a generalised $t$-flow since it does not need to contain ${\rm Id}_N$. Anyway, such families of diffeomorphisms, called hereafter {\it extended $t$-flows}, admit a group structure relative to their composition and, as they are compositions of an arbitrary number of generalised $t$-flows, they also act on integrable $t$-dependent polyvector fields. Moreover, extended $t$-flows can be autonomised as generalised $t$-flows. In this way, the generalisations of Theorems \ref{t2} and \ref{t3} to extended $t$-flows are immediate.
\section{Bundle transformations and integrability conditions}

Numerous works in the literature tackle the description of integrability conditions and/or methods of integration for non-autonomous systems of first-order ordinary and partial differential equations \cite{CR00,HLM13,DZ98}. Although many of them are based upon {\it ad-hoc} techniques, they generally rely on the same simple geometric procedure. This justifies to introduce the hereafter called {\it bundle transformations}, namely bundle {automorphisms} of the bundle $\R^s\times N\to\R^s$, which help in obtaining quasi-Lie systems and integrating the initial differential equation.

For instance, consider the {\it Abel differential equations of the first-kind} \cite{CR00}
\begin{equation}\label{AbeCh}
\frac{dx}{dt}=f_1(t)x^2+f_2(t)x^3,\qquad x\in \mathbb{R},\quad t\in \mathbb{R},
\end{equation}
where $f_1(t)$ and $f_2(t)$ are any  $t$-dependent functions such that $f_1(t),f_2(t)$ are not vanishing. If the celebrated Chiellini condition {is satisfied  \cite{Ch31,MR}}, namely
$$
\frac{d}{dt}\left(\frac{f_2}{f_1}\right)=kf_1,\qquad k\in \mathbb{R},
$$
then the $t$-dependent change of variables $z(t)={f_2(t)}x/{f_1(t)}$
maps the initial Abel differential equation into
\begin{equation}\label{NiceAbel}
\frac{dz}{dt}=\frac{f_1^2(t)}{f_2(t)}(z^2+z^3+kz),\qquad z\in \mathbb{R},
\end{equation}
which is autonomous up to a $t$-dependent reparametrisation and whose general solution can be easily  obtained \cite{In44}. In other words, the initial Abel differential equation can be solved from an easily integrable differential equation by a $t$-dependent change of variables.

The other way round, the above result can be explained by saying that there exists a family of exactly integrable $t$-dependent vector fields spanning a one-dimensional Lie algebra of vector fields on $\mathbb{R}$, namely
$$
Z_{(k)}=\xi(t)(kz+z^2+z^3)\frac{\partial}{\partial z},\qquad k\in \mathbb{R},\qquad \xi(t)\in C^\infty(\mathbb{R}),
$$
that can be transformed via a family of $t$-dependent transformations $$\bar g:(t,x)\in \mathbb{R}\times \mathbb{R}\mapsto (t,\sigma(t) x)\in \R\times\mathbb{R}\,,
$$
where $\sigma(t)$ is a non-vanishing $t$-dependent function,
into a new family of $t$-dependent vector fields
$$
g_\di Z_{(k)}=\dot gg^{-1}+g_{t*}Z_{(k)}=\left[\left(\frac{1}{\sigma(t)}\frac{d\sigma}{dt}(t)+\xi(t)k\right)z+\frac{\xi(t)}
{\sigma(t)}\left(z^2+\frac{1}{\sigma(t)}z^3\right)\right]\frac{\partial}{\partial z}
$$
containing any Abel differential equation satisfying the Chiellini condition. If the initial Abel differential equation (\ref{AbeCh}) is of the above form for a non-vanishing $t$-dependent function $\sigma(t)$ and an arbitrary $t$-dependent function $\xi(t)$, then it can be exactly solved. This occurs if and only if the Chiellini condition is satisfied. Indeed, this ensures that
$$
(f_1(t)z^2+f_2(t)z^3)\frac{\partial}{\partial z}-g_{t*}Z_{(k)}=\frac{1}{\sigma(t)}\frac{d\sigma}{dt}(t)z\frac{\partial}{\partial z},
$$
belongs, for every $t\in \mathbb{R}$, to the set of fundamental vector fields of the action of the commutative
group $\R_*=\R\setminus\{ 0\}$ on the manifold $\R$:
$$\R_*\times\R\ni(\sigma,x)\stackrel{\varkappa}{\mapsto}
 \sigma x\in\R\,.$$ In such a case, the $t$-dependent coefficient $\sigma(t)$ satisfies a differential equation determined by a $t$-dependent vector field taking values in a one-dimensional Lie algebra given by
$$
\dot gg^{-1}=(f_1(t)z^2+f_2(t)z^3)\frac{\partial}{\partial z}-g_{t*}Z_{(k)}=-k\xi(t)z\frac{\partial}{\partial z}.
$$
Note that the map $\phi$ relating the equations (\ref{AbeCh}) and (\ref{NiceAbel}) amounts to a $t$-dependent change of variables $z(t)=\varkappa(\sigma(t),x)$.
As actions induced by the integration of commutative or, more generally, solvable Lie algebras of vector fields can be{, in principle,} explicitly integrated, the above example suggests us the following procedure of integration of  $t$-dependent polyvector fields.

We need a certain family $V_0(\mathbb{R}^s)$ of polyvector fields ${\bf X}$ taking values in a finite-dimensional solvable Lie algebra of vector fields $V_0$. Using $t$-dependent generalised $t$-flows related to a $t$-dependent polyvector field taking values in a finite-dimensional solvable Lie algebra of vector fields $W$, we can produce integrable systems from those integrable systems related to $V_0(\mathbb{R}^s)$. This motivates the following scheme of integration% or quasi Lie systems from Lie systems.

\begin{definition}
Let $W$ be a finite-dimensional Lie algebra of vector fields on $N$ and let $\cG^s(W)$ be the group of  generalised $t$-flows $g^\bX$  whose generators $\bX$ take values in $W$. Two $t$-dependent polyvector fields $\bY$ and ${\bf Z}$ on $N$, representing integrable first-order PDEs in normal form, are \emph{$W$-related} if there exists $g\in\cG^s(W)$ such that $g_\di \bY={\bf Z}$. If $\bY$ is {\it explicitly integrable} and $W$ is solvable, then we call ${\bf Z}$ \emph{quasi-solvable}.
\end{definition}

\begin{remark}
We do not define precisely what `explicit {integrability}' means as its sense depends on the context. Roughly speaking, the term indicates that we are able to write general solutions explicitly (e.g. through algebraic operations and integrals -- solvability by quadratures).
Since every vector field in a finite-dimensional transitive and solvable Lie algebra of vector fields is integrable by quadratures (cf. \cite{CFG16}), we conjecture that quasi-solvable first-order PDEs are actually explicitly {integrable}.
\end{remark}

\begin{example}\label{two} Let us consider the almost homogeneous ordinary differential equation
\begin{equation}\label{sys}
\frac{dy}{dt}=f\left(\frac{a_1t+b_1y+c_1}{a_2t+b_2y+c_2}\right),\quad y\in \mathbb{R},
\end{equation}
where $a_i,b_i,c_i$, with $i=1,2$, are real constants such that $a_1b_2-a_2b_1\neq 0$, and $f:\mathbb{R}\rightarrow \mathbb{R}$.
The diffeomorphism $\phi:(t,y)\in\mathbb{R}^2\mapsto (t,(y-y_0)/(t-t_0))\in \mathbb{R}^2$, where $(t_0,y_0)$ is the unique solution to the algebraic system
$$
\left\{\begin{array}{ccccc}
a_1t_0&+&b_1y_0&=&-c_1,\\
a_2t_0&+&b_2y_0&=&-c_2,\\
\end{array}\right.\quad a_1b_2-a_2b_1\neq 0,
$$
allows us to map (\ref{sys}) into the differential equation
\begin{equation}\label{int}
\frac{dy}{dt}=\frac{1}{t-t_0}\left[f\left(\frac{a_1+b_1y}{a_2+b_2y}\right)-y\right],
\end{equation}
whose solution can be immediately obtained by quadratures.
In other words, the $t$-dependent affine change of variables ${y}\mapsto (y-y_0)/(t-t_0)$, which can be understood as the generalised $t$-flow of a $t$-dependent vector field taking values in $W=\langle \partial_y, y\partial_ y\rangle$, maps the differential equation (\ref{sys}) onto (\ref{int}). Hence, the differential equation (\ref{sys}) can be considered as quasi-integrable relative to the finite-dimensional solvable Lie algebra $W$.
\end{example}

%To determine a bundle Lie system, we propose to make use of a frame of integration. For every Lie algebra of vector fields $W$, we write $\mathcal{G}_W$ the Lie group of generalised $t$-flows for the non-autonomous vector fields taking values in $W$.

\begin{example}
As a final example, let us investigate an integrable system of first-order PDEs
$$
\frac{\partial y^i}{\partial t_\pi}=F_\pi^i\left(t,y\right),\quad i,\pi=1,\dots,s ,
$$
where $F^i_\pi(t'+ t,t' +y)=F^i_\pi(t,y)$ for each $i,\pi=1,\ldots,s$, every $t'\in \mathbb{R}^s$, and $t\in\mathbb{R}^s$. Then, a $t$-dependent change of variables $y=t+x$, related to the group of transformations $y=\sigma x+\lambda$ with $\sigma> 0$ and $t'\in \mathbb{R}$, transforms the above system of PDEs into  a new integrable one:
$$
\delta^i_k+\frac{\partial x^i}{\partial t_\pi}=F_\pi^i(t,t+x)=F_\pi^i(0,x)\Rightarrow \frac{\partial x^i}{\partial t_\pi}=F_\pi^i(0,x)-\delta^i_\pi,\qquad i,\pi=  1,\dots,s ,
$$
which is {a trivially integrable} autonomous system of PDEs. In fact, the integrability of the above system implies that the vector fields on $N$ of the form $(F^i_\pi(0,x)-\delta^i_\pi)\partial_{x^i}$, with $\pi=1,\ldots,s$, commute among themselves and there exist coordinates $u_1,\ldots, u_s$ rectifying {all them}  simultaneously. The knowledge of these coordinates leads to the immediate integration of the transformed system and, by inverting the $t$-dependent change of variables $y=t+x$, of the initial one.
\end{example}

The previous procedure is quite frequently applied in the literature on integrability conditions in a more or less clear way \cite{AS64,CL15,Ma16,HLM13}. Obviously, it gives rise to finding an infinite number of integrability conditions for differential equations. Nevertheless, we believe that the real problem consists in giving a procedure to determine when a certain differential equation can be obtained by means of the above approach. That is, the crucial question is to provide a geometric description of the family of $t$-dependent polyvector fields obtained by transforming a family of $t$-dependent polyvector fields taking values in a finite-dimensional Lie algebra of vector fields by means of the $t$-dependent changes of variables originated by the Lie group action of a solvable group.

A method to investigate the applicability of the above-mentioned procedure will be developed in forthcoming sections through the theory of quasi-Lie schemes \cite{CGL08,CL15}. More specifically, Section 4 studies the properties of PDE Lie systems and show several new applications of such systems. This is interesting as  we intend to map families of systems of PDEs to PDE Lie systems. Such families, which are proved to admit common $t$-dependent superposition rules, are analysed in Section 5 whereas Section 6 provides a characterisation of such families. Section 7 uses quasi-Lie schemes to map families of systems of PDEs into PDE Lie systems. Finally, Section 8 studies the so-called quasi-Lie invariants for PDEs, which help us to determine families of quasi-Lie systems of PDEs.

A list of some ODEs that can be approached by our methods is provided in Table \ref{table2}. Of course, one can use the proposed procedure not only to solve the equations or their families but also to find their geometrical properties, e.g. superposition rules.

%We will call such systems \emph{quasi-Lie systems}.  A particular example of these approaches is given in Example \ref{two}.

% Quasi-Lie schemes, developed in \cite{CGL08}, represent a particular type of frame of integration associated with finite-dimensional spaces $W\subset V$ of vector fields such that $[W,V]\subset V$ and $W$ is a finite-dimensional Lie algebra of vector fields.

\begin{table}[h!] {%\footnotesize
  \noindent
\caption{{\small Types of ordinary differential equations approachable by quasi-Lie schemes and Lie algebras $W$. The number $n$ is an integer and $'$ stands for the derivative with respect to $t\in \mathbb{R}$.}}
\label{table2}
\medskip
\noindent\hfill
\begin{tabular}{ l   l l }
\hline
 & &\\[-1.5ex]
Form & Differential equations \\[+1.0ex]
\hline
     &\\[-1.5ex]
 $y'=a_0(t)+a_1(t)y+\ldots +a_n(t)y^n$, $n>2$&  Abel differential equations of the first kind \\[2pt]
 $y'=a_1(t)y+a_n(t)y^\alpha,\quad \alpha \in \mathbb{R}\backslash\{1,0\}$&  Bernoulli equations   \\[2pt]
 $y''=a(t)y'-a_n(t)y^\alpha,\quad \alpha\in \mathbb{R}$&  generalised Emden equations \\[2pt]
% & $y''=f(t)y^n $ &  Emden-Fowler equations\\[2pt]
 $y'=f\left(\frac{a_1t+b_1y+c_1}{a_2t+b_2y+c_2}\right),\quad a_1b_2-a_2b_1\neq 0$& Almost homogeneous differential equations\\[2pt]
 $y'=f\left(t,y\right)$, $f(\lambda t,\lambda y)=\lambda^nf(t,y)$ & $n$-order homogeneous differential equations \\
 $y''=a(t)y'+a_2(t)y+a_3/y^3,\,\,a_3\in \mathbb{R}\backslash\{0\}$ & Dissipative Milne--Pinney equations\\
$y'=-y\frac{e^{i\alpha t}+y}{e^{i\alpha t}-y}$& Levner equation\\
 \\[2pt]
    \hline
\end{tabular}
\hfill}
\end{table}
\section{PDE Lie systems: basics and new examples}
To introduce quasi-Lie schemes and quasi-Lie systems, let us discuss now results concerning the so-called \emph{PDE Lie systems} \cite{CGM07}. Relevantly, most examples and applications of PDE Lie systems showed in this section are new and they can be applied in the study of B\"acklund transformations and other new problems. This is specially interesting as their study has been so far almost always purely theoretical (cf. \cite{CGLS14,Dissertationes,GR95,OG00}).

\begin{definition} A {\it PDE Lie system} ${\bf X}$ is a non-autonomous ($t$-dependent) system of first-order PDEs on $N$ in normal form admitting a {\it superposition rule}, i.e. there exists a function $\Phi:N^m\times N\rightarrow
	N$ such that the general solution to ${\bf X}$, let us say $x(t)$, can be written as
\begin{equation*}\label{SupPDE}
x(t)=\Phi\left({x_{(1)}}(t),\ldots,{x_{(m)}}(t);\lambda \right),
\end{equation*}
for a generic family $x_{(1)}(t),\ldots, x_{(m)}(t)$ of particular solutions and {$\lambda \in N$}.  We call $\Phi$ a \emph{superposition rule} for $\bX$.
\end{definition}

%Evidently, (\ref{SupPDE}) can be considered as a $t$-dependent superposition rule of the form (\ref{}).
%Systems of PDEs admitting a superposition were studied in \cite{CGM07} and

\begin{example}A {{\it Riccati  partial  differential equation}} is a non-autonomous system of PDEs of the form
\begin{equation}\label{PRE}
\left\{
\begin{array}{rl}
\dfrac{\partial u}{\partial t_1}&=b_1^{1)}(t)+b^{2)}_1(t) u+b^{3)}_1(t) u^2,\\
\dfrac{\partial u}{\partial t_2}&=b_2^{1)}(t)+b^{2)}_2(t) u+b^{3)}_2(t) u^2,\\
\end{array}\right.\quad t=(t_1,t_2)\in \mathbb{R}^2,\quad  u\in \mathbb{R},
\end{equation}
where $b_i^{1)},b_i^{2)},b_i^{3)}:\mathbb{R}^2\rightarrow \mathbb{R}$, with $i=1,2$, are arbitrary functions such that the ZCC condition is satisfied. {Riccati partial  differential} equations appear, for instance, in the study of Toda lattices and in particular cases of Wess-Zumino-Novikov-Witten (WZNW) (cf. \cite{FGRSZ99}).
A particular  {Riccati partial  differential}  equation satisfying the ZCC condition is given by\footnote{The system (\ref{BT}) solves a typo present in \cite[Eq. (10)]{PZ04}. It is simple to see that the compatibility condition of the expression of Eq. (10) of that book cannot give rise to the KdV equation.}
\begin{equation}\label{BT}\left\{
\begin{aligned}
\dfrac{\partial u}{\partial t_1}&=-\epsilon\frac{\partial^2 w}{\partial t_2^2}+2u\frac{\partial w}{\partial t_2}+2w\epsilon(u^2-w),\\
\dfrac{\partial u}{\partial t_2}&=\epsilon(-u^2+w),\qquad \epsilon=\pm 1,
\end{aligned}\right.
\end{equation}
where $w(t_1,t_2)$ stands for a particular solution to the Korteweg-de Vries (KdV) equation
$$
\frac{\partial w}{\partial t_1}-6w\frac{\partial w}{\partial t_2}+\frac{\partial^3w}{\partial t_2^3}=0.
$$
Every particular solution $u(t,x)$ of (\ref{BT}) becomes a solution of the modified Korteweg-de Vries equation $\partial_{t_1} u+\partial_{t_2}^3u-6u^2\partial_{t_2}u=0$ \cite{PZ04}. Moreover, the system of PDEs (\ref{BT}) can be understood as a certain type of Miura transformation for $\epsilon=1$, or, a B\"acklund transformation between the KdV equation and the modified KdV equation  (cf. \cite{PZ04}).
% \begin{equation*}\label{PRE1}
% \left\{
% \begin{array}{rl}
% \dfrac{\partial u}{\partial t_1}&=\dfrac{df}{dt}(t_1)+e^{t_2}(f(t_1)- u)^2\,,\\
% \dfrac{\partial u}{\partial t_2}&=f(t_1)- u\,,\\
% \end{array}\right.\quad t:=(t_1,t_2)\in \mathbb{R}^2\,,
% \end{equation*}
% where $f:\mathbb{R}\rightarrow \mathbb{R}$ is an arbitrary $t_1$-dependent function.

Since our  {Riccati partial  differential}  equation is a system of first-order PDEs in normal form assuming the ZCC condition, the value of a particular solution $ u(t)$ to (\ref{PRE}) at a single point $t_0$ determines $u(t)$
{on} an open neighbourhood of the point. Hence, if all particular solutions are assumed to be globally defined, then the space of solutions of a  {Riccati partial  differential} equation  can be parametrised by a unique real parameter $\lambda$.

It is known \cite{CGM07} that the general solution $ u(t)$ to (\ref{PRE}) can be written as
\begin{equation}\label{SupRicc}
 u(t)=\frac{ u_{(1)}(t)( u_{(3)}(t)- u_{(2)}(t))-\lambda\, u_{(2)}(t)( u_{(3)}(t)- u_{(1)}
(t))}{( u_{(3)}(t)- u_{(2)}(t))-\lambda\,( u_{(3)}(t)- u_{(1)}(t))}
\end{equation}
for different particular solutions $ u_{(1)}(t),  u_{(2)}(t),  u_{(3)}(t)$ of (\ref{PRE}) and $\lambda \in \mathbb{R}$. Therefore,
$
 u(t)=\Phi( u_{(1)}(t), u_{(2)}(t), u_{(3)}(t);k)
$
for
$\Phi:\mathbb{R}^3\times\mathbb{R}\rightarrow\mathbb{R}$ defined by
$$
\Phi( u_{(1)}, u_{(2)}, u_{(3)};\lambda ):=\frac{ u_{(1)}( u_{(3)}- u_{(2)})-\lambda\, u_{(2)}( u_{(3)}
- u_{(1)})}{( u_{(3)}- u_{(2)})-\lambda\,( u_{(3)}- u_{(1)})}
$$
is an example of a superposition rule for the Riccati partial differential equations.

\end{example}
\color{black}
Superposition rules for systems of first-order PDEs were studied in \cite{CGM07,OG00}. A theorem characterising them reads as follows.

\begin{theorem} {\bf (The PDE Lie--Scheffers Theorem \cite{CGM07,OG00})} An integrable $t$-dependent system of PDEs ${\bf X}$ possesses a
superposition rule if and only if
\begin{equation*}\label{LieDecom}
{\bf X}=\sum_{\pi=1}^s\sum_{l=1}^rb_{\pi,l} (t)Y_l\otimes e^\pi,
\end{equation*}
for a family $Y_1,\ldots,Y_r$ of vector fields on $N$ spanning an
$r$-dimensional real Lie algebra, a so-called {\it Vessiot--Guldberg
Lie algebra} for ${\bf X}$,
and $t$-dependent functions $\{b_{\pi,l}(t)\}_{\tiny \begin{aligned}l\!=\!1,\!\dots\!,r\\\\[-2em] \pi\!=\! 1,\!\dots\!,s\end{aligned} }$. %This condition can be recast by saying that ${\bf X}$ is a PDE Lie system if and only if ${\bf X}$ is integrable and the irreducible Lie algebra of ${\bf X}$ is finite-dimensional. The decomposition (\ref{LieDecom}) is called a {\it Lie decomposition} of ${\bf X}$.
\end{theorem}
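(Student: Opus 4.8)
The plan is to prove both implications, reformulating the existence of a superposition rule as the existence of a fixed invariant foliation on $N^{m+1}$. Writing the $m$ particular solutions and the general one as coordinates $(x_{(1)},\dots,x_{(m)},x)$ on $N^{m+1}$, a superposition rule $\Phi$ is equivalent to an $N$-valued map $\Lambda=(\Lambda_1,\dots,\Lambda_n):N^{m+1}\to N$, obtained by solving $x=\Phi(x_{(1)},\dots,x_{(m)};\lambda)$ for $\lambda$, such that: (i) the $\Lambda_j$ are common first integrals of the diagonal prolongations $\widetilde{X}_\pi(t,\cdot)$ of every component $X_\pi(t,\cdot)$ to $N^{m+1}$, for all $\pi=1,\dots,s$ and all $t$; and (ii) $\Lambda$ restricts to a local diffeomorphism on each fibre $\{(x_{(1)},\dots,x_{(m)})\}\times N$, so that $x=\Phi(\dots;\lambda)$ can be recovered. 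The equivalence of (i) with the superposition property uses integrability: the ZCC guarantees that through each generic configuration passes a genuine joint integral leaf of the prolonged connection, along which each $\Lambda_j$ is constant. So first I would establish this dictionary, after which each implication becomes a statement about the distribution generated by the prolonged fields.

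For the implication from the Lie-algebra form to the superposition rule, suppose $\mathbf{X}=\sum_{\pi,l}b_{\pi,l}(t)Y_l\otimes e^\pi$ with $V=\langle Y_1,\dots,Y_r\rangle$ finite-dimensional. Since the diagonal prolongation $Y\mapsto\widetilde{Y}$ is a Lie-algebra monomorphism, the fields $\widetilde{Y}_1,\dots,\widetilde{Y}_r$ span a copy $\widetilde{V}$ of $V$ on $N^{m+1}$, and every $\widetilde{X}_\pi(t,\cdot)$ takes values in $\widetilde{V}$. Choosing $m$ large enough that the prolongation of $V$ to the first $m$ factors is generically free (its generators pointwise linearly independent on a dense open set), which is always possible for $m$ exceeding a bound depending only on $r$ and $n$, the distribution spanned by $\widetilde{V}$ on $N^{m+1}$ meets each last-factor fibre transversally, so the common first integrals of $\widetilde{V}$ immerse every such fibre. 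Selecting $n$ of them whose restriction to the last factor is a local diffeomorphism yields $\Lambda$ satisfying (i) and (ii), and solving $\Lambda(x_{(1)},\dots,x_{(m)},x)=\lambda$ for $x$ produces $\Phi$. Because the $\Lambda_j$ are first integrals of the whole algebra $\widetilde{V}$, the same $\Phi$ serves all directions $\pi$ simultaneously, which is precisely what a superposition rule for the PDE system requires.

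For the converse, the heart of the matter is a rigidity argument forcing finite-dimensionality. Each leaf of $\Lambda$ is a graph over the first $m$ factors, so fixing generic base points $p_1,\dots,p_m\in N$ and letting $\lambda$ range over $N$, the point $x=\Phi(p_1,\dots,p_m;\lambda)$ sweeps an open set $U\subseteq N$, and tangency of a prolongation $\widetilde{Y}$ to these fixed leaves reads $Y(x)=\sum_{i=1}^{m}A_i(x)\,Y(p_i)$ for $x\in U$, where the matrix-valued coefficients $A_i(x)=\partial\Phi/\partial x_{(i)}$ are fixed and independent of $Y$. Hence $Y|_U$ is determined by the finitely many numbers $Y(p_1),\dots,Y(p_m)$, so the space of all vector fields whose prolongation is tangent to the foliation restricts over $U$ to a space of dimension at most $mn$. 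This space contains every $X_\pi(t,\cdot)$ and, crucially, since the prolongation is a Lie-algebra homomorphism and the tangent distribution to a foliation is involutive, it also contains all iterated Lie brackets of the $X_\pi(t,\cdot)$. Therefore the Lie algebra $V$ generated by $\{X_\pi(t,\cdot)\}$ restricts over $U$ into a space of dimension at most $mn$; patching over a covering of $N$ shows $V$ is finite-dimensional, and expanding each $X_\pi(t,\cdot)$ in a basis of $V$ gives the asserted decomposition.

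The step I expect to be the main obstacle is this last finite-dimensionality claim in the converse: converting the pointwise tangency relation into a global bound on the dimension of the generated Lie algebra, and in particular justifying that the local spaces $V|_U$ glue to a single finite-dimensional Lie algebra of vector fields on $N$ rather than merely bounding the dimension chart by chart. Controlling the uniformity of the generic rank, so that the choice of $m$ and of the base points $p_i$ is admissible on a dense open set, is the technical price to pay; the ZCC enters only to guarantee that the foliation and its leaves through generic configurations are well defined, making the first-integral formulation of the superposition rule available in the first place.
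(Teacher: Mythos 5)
Your proposal is correct in substance, but it is worth noting at the outset that the paper never proves this particular statement itself: it is quoted from \cite{CGM07,OG00}, and what the paper actually proves is the generalised version (Theorem \ref{MT}) for families, via the foliation dictionary of Proposition \ref{Connection}. Against that machinery, your dictionary and your sufficiency argument are essentially the paper's: diagonal prolongations of a basis of the Vessiot--Guldberg algebra span an involutive distribution on $N^{m+1}$ which, for $m$ large enough to make the prolongations to the first $m$ factors generically independent, meets the last-factor fibres only trivially, and $n$ suitably chosen common first integrals produce the horizontal foliation, i.e.\ the superposition rule (this is Proposition \ref{Connection} combined with the argument of Corollary \ref{CorFin}). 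Where you genuinely diverge is the necessity direction. The paper's route bounds the pointwise rank of the involutive distribution generated by the prolonged fields and their brackets, selects spanning elements that are themselves prolongations, and then must invoke Lemma \ref{Prol} (whose proof rests on the $S_{m+1}$-invariance of diagonal prolongations) and Lemma \ref{Aut} to force the coefficient and structure functions to depend on $t$ alone. Your rigidity argument instead bounds the dimension of the space of vector fields directly: tangency of $\widetilde{Y}$ to the graph-leaves over fixed generic points $p_1,\dots,p_m$ gives $Y(x)=\sum_{i}A_i(x)Y(p_i)$ on the swept open set $U$, so restriction to $U$ is determined by the $mn$ numbers $Y(p_i)$, and closure under brackets is free because diagonal prolongation is a Lie-algebra morphism and the tangent distribution of a foliation is involutive. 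This is more elementary --- essentially Lie's original argument --- and it produces at once a genuine Vessiot--Guldberg Lie algebra with structure \emph{constants}, which is exactly what this theorem asserts; the paper's lemma-based route is heavier precisely because it is built to survive the generalisation to families, where only $t$-dependent structure functions can be expected. The cost is the one you flag yourself: your algebra lives only on $U$, and for merely smooth vector fields there is no automatic globalisation (a nonzero smooth field may vanish on open sets). This is harmless here, because superposition rules and the theorem are local statements near generic points --- the paper's own proofs also work only generically --- but your conclusion should accordingly be stated as a local/generic decomposition rather than one over all of $N$.
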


% \begin{example}\label{exA}
% Consider a cubic system of partial differential Abel differential equations:
% \begin{equation}\label{PRE2}
% \left\{
% \begin{aligned}
% \frac{\partial u}{\partial t_1}&=f(t_1,t_2)u^3+g(t_1,t_2)u\,,\\
% \frac{\partial u}{\partial t_2}&=h(t_1,t_2)u^3+k(t_1,t_2)u\,,
% \end{aligned}\right.
% \end{equation}
% where $f,g,h,k$ are arbitrary real functions depending on $t_1,t_2$.
% It is immediate to obtain that its ZCC condition reads
% \begin{equation}\label{conA}
% \frac{\partial f}{\partial t_2}+2fk=\frac{\partial h}{\partial t_1}+2gh\,,\quad \frac{\partial g}{\partial t_2}=\frac{\partial k}{\partial t_1}\,.
% \end{equation}
% Here, $Y_1=x^3\partial_x$ and $Y_2=x\partial_x$ span a two-dimensional solvable Vessiot--Guldberg Lie algebra for (\ref{PRE2}). Hence the system of PDEs (\ref{PRE2}) whose functions $f,g,h,k$ satisfy (\ref{conA}) is a PDE Lie system.
% \end{example}
\begin{example}
Let us analyse the system of PDEs
\begin{equation}\label{PRE3}
\left\{
\begin{aligned}
\displaystyle\frac{\partial u}{\partial t_1}&=\sin(u+g)-\frac{\partial f}{\partial t_1}\,,\\
\displaystyle\frac{\partial u}{\partial t_2}&=\sin(u+f)-\frac{\partial g}{\partial t_2}\,,
\end{aligned}\right.
\end{equation}
where $f,g$ are real functions depending on $t_1,t_2$. The previous system of PDEs satisfies the ZCC condition if and only if $A(t_1,t_2)=f(t_1,t_2)-g(t_1,t_2)$ satisfies the sine-Gordon equation \cite{Ra89}
$$\frac{\partial^2 A}{\partial t_1\partial t_2}=\sin(A)\,.$$
As $\sin(u+h)=\sin(u)\cos(h)+\cos(u)\sin(h)$, the system of PDEs (\ref{PRE3}) is related to a $t$-dependent polyvector field taking values in the Lie algebra, $V^{\rm sg}$, spanned by the vector fields
$$X_1=\partial_u\,,\quad X_2=\sin(u)\partial_u\,,\quad X_3=\cos(u)\partial_u,$$
which is isomorphic to $\mathfrak{sl}(2,\R)$. In virtue of the PDE Lie--Scheffers Theorem, (\ref{PRE3}) is a PDE Lie system and $V^{\rm sg}$ is one of its Vessiot--Guldberg Lie algebras.

\end{example}

\begin{example} Any particular solution $w(t_1,t_2)$ of the Liouville equation $\partial^2w/\partial t_1\partial t_2=ae^{\lambda w}$, with $a,\lambda\in \mathbb{R}$, is a particular solution of the system of PDEs of the form (see \cite[Section 3.5.1.2]{PZ04} for details):
\begin{equation}\label{Liu}
\left\{\begin{aligned}
\frac{\partial w}{\partial t_1}&=\frac{\partial u}{\partial t_1}-\frac{2}{\lambda}\exp\left(\frac\lambda 2(w+u)\right),\\
\frac{\partial w}{\partial t_2}&=-\frac{\partial u}{\partial t_2}-a\exp\left(\frac{\lambda}{2}(w-u)\right),\\
\end{aligned}\right.
\end{equation}
where $u(t_1,t_2)$ is a particular solution of $\partial^2u/\partial t_1\partial t_2=0$, and vice versa. It is immediate that the above system of PDEs is related to a $t$-dependent polyvector field taking values in the Vessiot--Guldberg Lie algebra $V_L=\langle X_1=\partial_w, X_2=e^{\lambda w/2}\partial_w\rangle$. Hence, (\ref{Liu}) is a PDE Lie system.
\end{example}

\begin{example}Let $G$ be a Lie group. Consider the reduction of the {\it Wess-Zumino-Novikov-Witten (WZNW) equations} given by \cite{FGRSZ99}
\begin{equation}\label{WZNW}
\frac{\partial \psi}{\partial t_{-\pi}}=-R_{\psi*}\lambda_{-\pi},\qquad\frac{\partial \psi}{\partial t_\pi}=L_{\psi*}\lambda_{\pi},\qquad \pi=1,\ldots,s,\quad \psi\in G,
\end{equation}
where $L_\psi:\psi_0\in G\mapsto \psi\psi_0\in G$, $R_\psi:\psi_0\in G\mapsto \psi_0\psi\in G$, and the functions $\lambda_{-\pi},\lambda_\pi:\mathbb{R}^{2s}\rightarrow \mathfrak{g}$ for $\pi=1,\ldots,s$ satisfy
$
\partial_{t_\pi}\lambda_{-\pi'}=\partial_{t_{-\pi'}}\lambda_{\pi}=0$, for $\pi,\pi'=1,\ldots,s.
$
The system of PDEs (\ref{WZNW}) also appears in the study of the Toda lattice and gives a generalisation of the Redheffer-Reid systems \cite{Re56,Re57,Re59}.

Let us write $t=(t_{-s},\ldots,t_{-1},t_1,\ldots,t_{s})$. Our reduced WZNZ equation can be related to a unique $t$-dependent polyvector field ${\bf X}$ on $G$ given by
$$
{\bf X}(t,\psi)=\sum_{\pi=1}^s[-R_{\psi*}\lambda_{-\pi}(t)\otimes e_{-\pi}+L_{\psi*}\lambda_{\pi}(t)\otimes e_{\pi}].
$$
The integrability of (\ref{WZNW}) %, we have to ensure the polyvector field $\bar {\bf X}$ on $G_{\mathbb{R}^{2d}}$, namely
%$$
%\bar{\bf X}(t,\psi)=\sum_{\beta=1,\ldots,k}[(\partial_{t_{-\beta}}-R_{\psi*}\lambda_{-\beta}(t))\otimes e_{-\beta}+(\partial_{t_{\beta}}+L_{\psi*}\lambda_{\beta}(t))\otimes e_{\beta}]
%$$
 amounts to the well-known conditions \cite{FGRSZ99}
$$
\partial_{t_{-\pi}}\lambda_{-\pi'}-\partial_{t_{-\pi'}}\lambda_{-\pi}+[\lambda_{-\pi},\lambda_{-\pi'}]=0,\quad \partial_{t_{\pi}}\lambda_{\pi'}-\partial_{t_{\pi'}}\lambda_{\pi}+[\lambda_{\pi},\lambda_{\pi'}]=0,\quad \pi,\pi'=1,\ldots,s.
$$
The $t$-dependent polyvector field ${\bf X}$ takes values in the Lie algebra $V^{RL}$ spanned by  right- and left-invariant vector fields on $G$. Hence, (\ref{WZNW}) becomes a PDE Lie system.

If $G$ is commutative, then the multiplication of a particular solution $\psi_0(t)$ of (\ref{WZNW}) on the right by an element $\sigma\in G$ generates a new solution. Since the value of every particular solution to (\ref{WZNW}) is determined by its value at a fixed point, the general solution, $\psi(t)$, to the system of PDEs (\ref{WZNW}) can be written as
$$
\psi(t)=R_\sigma\psi_0(t),\qquad \sigma\in G.
$$
This gives rise to a superposition rule $\Phi:(\psi_0,\sigma)\in G\times G\mapsto R_\sigma\psi_0\in G$ for ${\bf X}$ depending on just one particular solution.
\end{example}

\section{Families of PDEs, $t$-dependent superpositions rules and foliations.}\label{TDS}

This section presents the concept of common $t$-dependent superposition rules for a family of $t$-dependent systems of first-order PDEs. We study the geometry of this structure and  relate this concept to horizontal foliations in certain fibrations. %For simplicity, we investigate these concepts in local coordinates, but our approach can handle systems on manifolds after adding minor technicalities.

Consider a family of $t$-dependent systems of
integrable first-order PDEs on $N$ parameterised by elements $\alpha$ of a set $\Lambda$ given by
\begin{equation}\label{eq1}
\frac{\partial x^i}{\partial t_\pi}=Y^i_{\pi,\alpha}(t,x),\qquad i=  1,\dots,n ,\qquad \pi=1,\dots,s ,\qquad \alpha\in\Lambda.
\end{equation}
%The ZCC condition reads% term integrable refers to the fact that each system (\ref{eq1}) satisfies the zero-curvature condition, namely
%\begin{equation}\label{con}
%\frac{\partial Y_{k,\alpha}}{\partial t_l}-\frac{\partial Y_{l,\alpha}}{\partial t_\pi}+[Y_{l,\alpha},Y_{k,\alpha}]=0,\qquad k,l= 1,\dots,s ,\qquad \alpha\in\Lambda.
%\end{equation}
%allowing us to ensure that it  admits a local particular solution for every arbitrary initial condition.
In applications, $\Lambda$ is often a finite set or a certain family of $t$-dependent functions (cf. \cite{CGL09}). %The particular solutions $x(t)$ of each system ${\bf Y}_{\alpha}$ are in one-to-one correspondence with the maximal integral submanifolds $
%\sigma:t\in\mathbb{R}^s\mapsto (t,x(t))\in \mathbb{R}^s\times N$ of the integrable distribution generated by the family of vector fields on $\mathbb{R}^s\times N$ given by
%\begin{equation*}\label{FamVec}
%\frac{\partial}{\partial t_\pi}+Y_{k,\alpha}(t,x)=\frac{\partial}{\partial t_\pi}+\sum_{i=1}^nY^i_{k,\alpha}(t,x)\pd{}{x^i},
%\qquad k=  1,\dots,s.
%\end{equation*}
%\begin{note}To simplify the terminology, we will use $Y_\alpha$ to designate both: the $t$-dependent
%polyvector field taking values in $\mathbb{R}^s$ of the form $Y_\alpha=\sum_{\pi=1}^sY_{k,\alpha}\otimes e^k$ and the non-autonomous system of PDEs related to the system associated with $\alpha$.
%\end{note}
%PDE systems of the form (\ref{eq1}) are not standard in the literature. Nevertheless, important PDE systems are of this type. This involves the LSP for immersed submanifolds or the Von Mises transformation appearing in the study of Navier-Stockes equations \cite{Zw89}
We will hereafter view $\mathbb{R}^s\times N^{m+1}$ and $\mathbb{R}^s\times N$ as fibre bundles relative to their standard projections onto $\mathbb{R}^s$.

\begin{definition}\label{CSRDef}{\rm A {\it common $t$-dependent superposition rule} depending on $m$ particular solutions for a family of $t$-dependent systems of PDEs (\ref{eq1}) is  a mapping $\Phi:\mathbb{R}^s\times
N^{m}\rightarrow N$ of the form
\begin{equation}\label{TSupRul}
x=\Phi(t,x_{(1)},\ldots,x_{(m)};\lambda),
\end{equation}
such that the general solution, $x(t)$, of an arbitrary system ${\bf Y}_\alpha$ of (\ref{eq1}) can be written as
$$
x(t)=\Phi(t,x_{(1)}(t),\ldots,x_{(m)}(t);\lambda),
$$
where $x_{(1)}(t),\ldots, x_{(m)}(t)$ is a generic set of particular solutions of ${\bf Y}_\alpha$ and
$\lambda\in N$. A family of systems (\ref{eq1})
admitting a common $t$-dependent superposition rule is called a {\it PDE Lie family}.}
\end{definition}

% \begin{note}
% The word `generic' in the above definition is not crucial for
% our purposes and depends on the context. Roughly speaking, it means for almost any. One can have in mind the following example: a system of linear
% homogeneous differential equations admits a linear common $t$-dependent superposition rule for which `generic' means that the particular solutions are linearly independent.
% \end{note}

\begin{example}
All time-dependent systems of first-order ordinary differential equations sharing a common Vessiot--Guldberg Lie algebra admit a common time-independent superposition rule \cite[Proposition 2.9]{CGL12}.

%This superposition rule can be considered as a $t$-dependent common superposition rule and
%Then, all systems ${\bf X}$ taking values in $V$ naturally give rise to a PDE Lie family.
\end{example}

Our aim now is to characterise common $t$-dependent superposition rules for a family of systems of PDEs as a certain type of flat connection. The proof of this fact is given in Proposition \ref{Connection}, which is a generalisation of \cite[Proposition 6]{CGL09}. Our proof follows the same ideas of that work while taking into account several additional complications due to the need of dealing with $t$-dependent polyvector fields. %Our proof is based upon the following two lemmas.

\begin{definition}
{\rm Given a $t$-dependent polyvector field  ${\bf Y}=\sum_{\pi'=1}^s\sum_{i=1}^nY_{\pi'}^i(t,x)\partial/\partial x^i\otimes e^{\pi'}$ on $N$,  its {\it prolongation} to $\mathbb{R}^s\times N^{m+1}$ is the polyvector field on
$\mathbb{R}^s\times N^{m+1}$ given by
\begin{equation}\label{prolongation}
\widehat {\bf Y}^{(m)}(t,x_{(0)},\ldots,x_{(m)})=\sum_{\pi'=1}^s\sum_{a=0}^{m}\sum_{i=1}^nY_{\pi'}^ i(t,x_{(a)})\pd{}{x^ i_{(a)}}\otimes e^{\pi'},
\end{equation}
and its
{\it $t_\pi$-prolongation} to $\mathbb{R}^s\times N^{m+1}$, where $\pi=1,\ldots,s$, is the polyvector field on
$\mathbb{R}^s\times N^{m+1}$ of the form
\begin{equation}\label{prolongation2}
\widetilde {\bf Y}^{[\pi]}(t,x_{(0)},\ldots,x_{(m)})=\sum_{\pi'=1}^s\left(\pd{}{t_\pi}+\sum_{a=0}^{m}\sum_{i=1}^nY_{\pi'}^ i(t,x_{(a)})\pd{}{x^ i_{(a)}}\right)\otimes e^{\pi'}.
\end{equation}
The $t_\pi$-prolongation of ${\bf Y}$ to $\mathbb{R}^s\times N$, namely $\bar {\bf Y}^{[\pi]}=\sum_{\pi'=1}^s\left(\partial/\partial t_\pi+\sum_{i=1}^nY^i_{\pi'}(t,x)\partial/\partial x^i\right)\otimes e^{\pi'}$, is called its {\it $t_\pi$-autonomisation}.
}
\end{definition}

The superscript $(m)$ in (\ref{prolongation}) will be dropped when its value will be clear from context. The above definitions retrieve for $s=1$, the definitions for prolongations and time-prolongations of time-dependent vector fields given in \cite{CGL09}. By dropping the sum over $\pi'$ and the index $\pi'$ in (\ref{prolongation}) and (\ref{prolongation2}), we obtain the prolongations and $t$-prolongations of a $t$-dependent vector field $Y=\sum_{i=1}^nY^i(t,x)\partial/\partial x^i$.
The following two lemmata are straightforward.
\begin{lemma}\label{PureProl} Given two $t$-dependent vector fields $X$ and $Y$ on $N$ and their $t_\pi$- and $t_{\pi'}$-prolongations $\widetilde{X}^{[\pi]}$, $\widetilde{Y}^{[\pi']}$ to $\mathbb{R}^s\times N^{m+1}$,
the Lie bracket $\left[\widetilde X^{[\pi]},\widetilde Y^{[\pi']}\right]$ is the prolongation to $\mathbb{R}^s\times N^{m+1}$
of a $t$-dependent vector field $Z$ on $N$, i.e. $[\widetilde X^{[\pi]},\widetilde Y^{[\pi']}]=\widehat Z$  for some $t$-dependent vector field $Z$ on $N$.

\end{lemma}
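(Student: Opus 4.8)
The plan is to reduce everything to a direct coordinate computation, exploiting the fact that each $t_\pi$-prolongation splits as a $t$-derivation plus a pure prolongation. Writing $\widehat{X}$ and $\widehat{Y}$ for the (non-time) prolongations of $X$ and $Y$ to $\mathbb{R}^s\times N^{m+1}$, the definitions give $\widetilde{X}^{[\pi]}=\partial/\partial t_\pi+\widehat{X}$ and $\widetilde{Y}^{[\pi']}=\partial/\partial t_{\pi'}+\widehat{Y}$. First I would expand the bracket by bilinearity into four pieces, namely $[\partial_{t_\pi},\partial_{t_{\pi'}}]$, $[\partial_{t_\pi},\widehat{Y}]$, $[\widehat{X},\partial_{t_{\pi'}}]$, and $[\widehat{X},\widehat{Y}]$, and then identify each as the prolongation of a $t$-dependent vector field on $N$.

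The first piece vanishes since the coordinate vector fields $\partial_{t_\pi}$ commute. For the two mixed pieces, I would observe that bracketing with $\partial_{t_\pi}$ merely differentiates the coefficient functions with respect to $t_\pi$ without touching the copy structure; hence $[\partial_{t_\pi},\widehat{Y}]=\widehat{(\partial Y/\partial t_\pi)}$ and similarly $[\widehat{X},\partial_{t_{\pi'}}]=-\widehat{(\partial X/\partial t_{\pi'})}$, where $\partial Y/\partial t_\pi$ denotes the $t$-dependent vector field obtained by differentiating the components of $Y$.

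The key step is the pure-prolongation bracket $[\widehat{X},\widehat{Y}]$. Writing out the double sum over copies $a,b$ and indices $i,j$, the essential observation is that the coefficient $Y^j(t,x_{(b)})$ depends only on the coordinates of the $b$-th copy, so that $\partial Y^j(t,x_{(b)})/\partial x^i_{(a)}=\delta_{ab}\,(\partial Y^j/\partial x^i)(t,x_{(b)})$, and likewise for $X$. The Kronecker delta collapses the sum to its diagonal $a=b$, and what remains is exactly the prolongation of the fibrewise (frozen-$t$) Lie bracket, i.e. $[\widehat{X},\widehat{Y}]=\widehat{[X,Y]}$. Collecting the three surviving terms yields $[\widetilde{X}^{[\pi]},\widetilde{Y}^{[\pi']}]=\widehat{Z}$ with $Z=\partial Y/\partial t_\pi-\partial X/\partial t_{\pi'}+[X,Y]$, which is the desired $t$-dependent vector field on $N$.

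The computation is entirely routine; the only point requiring care, and the real content of the lemma, is the vanishing of the off-diagonal ($a\neq b$) cross terms in $[\widehat{X},\widehat{Y}]$, which is precisely what guarantees that the bracket of two prolongations stays within the class of prolongations rather than producing a genuinely coupled vector field on $N^{m+1}$.
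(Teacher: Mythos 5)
Your proof is correct. The paper offers no proof of this lemma (it simply declares both lemmata ``straightforward''), and your argument --- splitting $\widetilde X^{[\pi]}=\partial_{t_\pi}+\widehat X$, $\widetilde Y^{[\pi']}=\partial_{t_{\pi'}}+\widehat Y$, noting that the off-diagonal ($a\neq b$) cross terms collapse via the Kronecker delta, and concluding $[\widetilde X^{[\pi]},\widetilde Y^{[\pi']}]=\widehat Z$ with $Z=\partial Y/\partial t_\pi-\partial X/\partial t_{\pi'}+[X,Y]$ --- is precisely the routine coordinate computation the authors had in mind.
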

\begin{lemma}\label{Aut} Given a family of $t$-dependent vector fields, $X_1,\ldots,X_r$,  on $N$,
their $t_\pi$-autonomisations $\bar X_j^{[k]}$, $\pi=1,\dots,s$ and $j=1,\ldots,r$, satisfy the relations
$$
[\bar X^{[\pi]}_o, \bar X^{[\pi']}_p](t,x)=\sum_{\pi''=1}^s\sum_{u=1}^rf^{\pi \pi' u}_{op\pi''}(t)\bar X^{[\pi'']}_u(t,x)\,,\qquad o,p=1,\ldots,r,\quad \pi,\pi'=1,\ldots,s,
$$
for some $t$-dependent functions $f^{\pi \pi'u}_{op\pi''}:\mathbb{R}^s\rightarrow\mathbb{R}$, if and only if their
$t_\pi$-prolongations to $\mathbb{R}^s\times N^{m+1}$ satisfy
analogous relations, i.e.
$$
\left[\widetilde X^{[\pi]}_o, \widetilde X^{[\pi']}_p\right](t,x)=\sum_{\pi''=1}^s\sum_{u=1}^rf^{\pi \pi'u}_{op\pi''}(t)\widetilde X^{[\pi'']}_u(t,x)\,\qquad o,p=1,\ldots,r,\quad \pi,\pi'=1,\ldots,s.
$$
Moreover,
% in view of Lemma \ref{PureProl},
$\sum_{u=1}^rf^{\pi \pi'u}_{op\pi''}(t)=0$ for all $\pi,\pi',\pi''=  1,\dots,s $ and $o,p=  1,\dots,r$.
\end{lemma}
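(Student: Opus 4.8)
The plan is to compute both Lie brackets explicitly and to observe that they are governed by one and the same residual $t$-dependent vector field on $N$. Writing the autonomisations and $t_\pi$-prolongations in the form $\bar X_o^{[\pi]}=\partial_{t_\pi}+X_o$ and $\widetilde X_o^{[\pi]}=\partial_{t_\pi}+\widehat X_o$, where $\widehat X_o$ is the pure prolongation of $X_o$ to $\mathbb{R}^s\times N^{m+1}$, a direct expansion of the bracket gives
\[
[\bar X_o^{[\pi]},\bar X_p^{[\pi']}]=\partial_{t_\pi}X_p-\partial_{t_{\pi'}}X_o+[X_o,X_p]=:Z_{op}^{\pi\pi'},
\]
a vector field on $\mathbb{R}^s\times N$ with \emph{no} $\partial_t$-component, while by the same computation together with Lemma \ref{PureProl},
\[
[\widetilde X_o^{[\pi]},\widetilde X_p^{[\pi']}]=\partial_{t_\pi}\widehat X_p-\partial_{t_{\pi'}}\widehat X_o+[\widehat X_o,\widehat X_p]=\widehat{Z_{op}^{\pi\pi'}}.
\]
Thus the autonomisation bracket is $Z_{op}^{\pi\pi'}$ regarded as a vertical field, and the prolongation bracket is its prolongation $\widehat{Z_{op}^{\pi\pi'}}$; the two are encoded by the same object on $N$.

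First I would settle the \textbf{Moreover} statement, which in fact drives everything. Expanding the right-hand side of either proposed relation produces a $\partial_t$-part equal to $\sum_{\pi''}\bigl(\sum_u f^{\pi\pi'u}_{op\pi''}(t)\bigr)\partial_{t_{\pi''}}$. Since neither $Z_{op}^{\pi\pi'}$ nor its prolongation has any $\partial_t$-component, matching this part forces $\sum_u f^{\pi\pi'u}_{op\pi''}(t)=0$ for all admissible indices, as soon as either relation is assumed to hold.

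With this trace condition in hand, the equivalence reduces to routine component matching. Assuming the autonomisation relation, its vertical part reads $Z_{op}^{\pi\pi'}=\sum_{\pi''}\sum_u f^{\pi\pi'u}_{op\pi''}(t)X_u$; prolonging this identity and using $\sum_u f=0$ to annihilate the spurious $\partial_t$-terms reproduces exactly $\widehat{Z_{op}^{\pi\pi'}}=\sum_{\pi''}\sum_u f^{\pi\pi'u}_{op\pi''}(t)\widetilde X_u^{[\pi'']}$. Conversely, comparing the $\partial_{x_{(0)}}$-coefficients on both sides of the prolongation relation—equivalently, invoking the injectivity of the map $Z\mapsto\widehat Z$—recovers $Z_{op}^{\pi\pi'}=\sum_{\pi''}\sum_u f^{\pi\pi'u}_{op\pi''}(t)X_u$ on $N$, and restoring the vanishing $\partial_t$-terms yields the autonomisation relation.

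The only genuinely substantive point, which I would isolate as an auxiliary identity, is the compatibility of prolongation with the three operations appearing in the bracket: $\partial_{t_\pi}\widehat X=\widehat{\partial_{t_\pi}X}$ is immediate, whereas $[\widehat X,\widehat Y]=\widehat{[X,Y]}$ requires the observation that $\partial_{x^i_{(a)}}$ annihilates every coefficient $Y^j(t,x_{(b)})$ with $b\neq a$, so the mixed-copy terms drop out and the bracket decouples copy-by-copy. Everything else is bookkeeping of the indices $\pi,\pi',\pi''$ and $o,p,u$. The main conceptual obstacle is simply recognising that the absence of a time-component in the autonomisation bracket is precisely what both produces the trace condition $\sum_u f=0$ and permits the two sets of structure functions to be identified.
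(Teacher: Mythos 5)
Your proof is correct, and it is precisely the verification the paper has in mind: the paper states this lemma (together with Lemma \ref{PureProl}) without proof, declaring both ``straightforward,'' and your computation---writing both brackets in terms of the single residual field $Z_{op}^{\pi\pi'}=\partial_{t_\pi}X_p-\partial_{t_{\pi'}}X_o+[X_o,X_p]$, using that prolongation commutes with $\partial_{t_\pi}$ and with the Lie bracket, and extracting the trace condition $\sum_u f^{\pi\pi'u}_{op\pi''}=0$ from the absence of a $\partial_t$-component---is exactly the omitted argument. In particular, your observation that the trace condition is forced by either relation and is what allows the same structure functions to serve on both sides is the right way to organise the equivalence.
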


We now turn to proving the main result of this section.

\begin{proposition}\label{Connection}
A common $t$-dependent superposition rule $\Phi:\mathbb{R}^s\times N^m\rightarrow N$ for a PDE Lie family (\ref{eq1}) amounts to a horizontal foliation relative to the projection
$$
{\rm pr}:(t,x_{(0)},\ldots,x_{(m)})\in\mathbb{R}^s\times N^{m+1}\mapsto
(t,x_{(1)},\ldots,x_{(m)})\in \mathbb{R}^s\times N^m$$
such that the vector
fields $\{\widetilde Y^{[\pi]}_{\pi,\alpha}\}_{
\alpha\in\Lambda,\pi= 1,\dots,s }$  are tangent to their leaves.
\end{proposition}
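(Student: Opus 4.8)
The plan is to prove the equivalence by exhibiting an explicit correspondence between superposition rules and horizontal foliations, and then checking in both directions that the superposition-rule property matches the tangency of the prolonged vector fields to the leaves. Throughout I would work on the trivial bundle $\mathrm{pr}:\mathbb{R}^s\times N^{m+1}\to\mathbb{R}^s\times N^m$ whose fibre is the extra copy of $N$ carrying the coordinate $x_{(0)}$. A common $t$-dependent superposition rule $\Phi(t,x_{(1)},\ldots,x_{(m)};\lambda)$ is, by horizontality, the same datum as the family of graphs $L_\lambda=\{x^i_{(0)}=\Phi^i(t,x_{(1)},\ldots,x_{(m)};\lambda)\}$, $\lambda\in N$: these graphs are transverse to the fibres of $\mathrm{pr}$ and, for a genuine superposition rule, sweep out the total space as $\lambda$ varies, so they constitute a horizontal foliation. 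The content of the proposition is therefore that $\Phi$ satisfies the defining property of a superposition rule if and only if every $\widetilde Y^{[\pi]}_{\pi,\alpha}$ is tangent to each $L_\lambda$.

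For the direct implication I would take particular solutions $x_{(1)}(t),\ldots,x_{(m)}(t)$ of an arbitrary $\mathbf{Y}_\alpha$ and set $x_{(0)}(t)=\Phi(t,x_{(1)}(t),\ldots,x_{(m)}(t);\lambda)$. Differentiating this identity with respect to $t_\pi$, using $\partial x_{(a)}/\partial t_\pi=Y_{\pi,\alpha}(t,x_{(a)})$ for $a=1,\ldots,m$ together with the hypothesis that $x_{(0)}(t)$ is again a solution (so that $\partial x_{(0)}/\partial t_\pi=Y_{\pi,\alpha}(t,x_{(0)})$), yields exactly the relation
$$Y^i_{\pi,\alpha}(t,x_{(0)})=\pd{\Phi^i}{t_\pi}+\sum_{a=1}^m\sum_{j}Y^j_{\pi,\alpha}(t,x_{(a)})\pd{\Phi^i}{x^j_{(a)}}$$
on $L_\lambda$. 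Since $\Phi$ does not depend on $x_{(0)}$, this is precisely the statement that $\widetilde Y^{[\pi]}_{\pi,\alpha}$ annihilates the functions $x^i_{(0)}-\Phi^i$ cutting out $L_\lambda$, i.e. that it is tangent to the leaf. As the ZCC guarantees a solution of $\mathbf{Y}_\alpha$ through every point, this computation holds at every point of each leaf, not merely along one curve.

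For the converse I would first record that the prolonged fields are in involution: by the ZCC the autonomisations satisfy $[\bar Y^{[\pi]}_\alpha,\bar Y^{[\nu]}_\alpha]=0$, and Lemma \ref{PureProl} (reinforced by Lemma \ref{Aut}) transfers this to $[\widetilde Y^{[\pi]}_{\pi,\alpha},\widetilde Y^{[\nu]}_{\nu,\alpha}]=0$, so the distribution $\langle\widetilde Y^{[\pi]}_{\pi,\alpha}\rangle_{\pi=1}^{s}$ is integrable and its $s$-dimensional integral submanifolds are exactly the lifts $t\mapsto(t,x_{(0)}(t),\ldots,x_{(m)}(t))$ of $(m+1)$-tuples of solutions of $\mathbf{Y}_\alpha$. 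Given the horizontal foliation with every $\widetilde Y^{[\pi]}_{\pi,\alpha}$ tangent to its leaves, such a lift starting at a point $p_0$ of a leaf $L$ must stay inside $L$, because its generating vector fields lie in $TL$ along it; since $L$ is a horizontal graph $x_{(0)}=\Phi(t,x_{(1)},\ldots,x_{(m)};\lambda)$, this forces $x_{(0)}(t)=\Phi(t,x_{(1)}(t),\ldots,x_{(m)}(t);\lambda)$ with $\lambda$ the label of $L$. Letting the fibre coordinate of $p_0$ vary then recovers, by horizontality, every solution $x_{(0)}(t)$, so $\Phi$ is a common $t$-dependent superposition rule.

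I expect the main obstacle to be the careful transfer of integrability from $\mathbf{Y}_\alpha$ to its prolongation, and the bookkeeping forced by the several independent variables $t_1,\ldots,t_s$: unlike the ODE situation of \cite{CGL09}, one must verify simultaneously for all $\pi,\nu$ that the prolonged fields commute (via Lemma \ref{PureProl}) before foliation theory can be invoked, and one must check that horizontality is compatible with the $\partial/\partial t_\pi$-terms carried by the prolonged fields. The remaining delicate point is the genericity/regularity needed to parametrise the leaf space by $\lambda\in N$ and to recover the \emph{general} solution; this is handled exactly as in the proof of \cite[Proposition 6]{CGL09}.
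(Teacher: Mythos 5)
Your proof is correct and follows essentially the same route as the paper's: the direct part differentiates the superposition identity along solutions to obtain tangency of the prolonged fields (you work with the graphs $x_{(0)}=\Phi(t,x_{(1)},\ldots,x_{(m)};\lambda)$, while the paper equivalently works with the level sets of the inverse map $\Psi$ produced by the Implicit Function Theorem), and the converse combines horizontality with tangency to identify leaves with solutions. The only cosmetic differences are that your converse shows solution tuples must remain inside a leaf (using Lemmas \ref{PureProl} and \ref{Aut} to justify involutivity), whereas the paper shows that the leaf-wise horizontal lift of an $m$-tuple of solutions is again a solution --- two equivalent ways of running the same argument --- and both treatments defer the same genericity/regularity issues, as in \cite{CGL09}.
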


\begin{proof} Let us prove the direct part. The mapping
$\Phi(t,x_{(1)},\ldots,x_{(m)};\cdot):
N\mapsto  N$,
$x_{(0)}=\Phi(t,x_{(1)},\ldots,x_{(m)};\lambda)$, is regular at a generic point $(t,x_{(1)},\ldots, x_{(m)}) \in
\mathbb{R}^s\times N^m$. The Implicit Function Theorem ensures that this map can be inverted, which gives rise to a  function $\Psi: \mathbb{R}^s\times N^{m+1}\rightarrow N$ such that
$
\lambda=\Psi(t,x_{(0)},\ldots, x_{(m)}),
$
where $\lambda$ is the unique point in $N$ satisfying that
$
x_{(0)}=\Phi(t,x_{(1)},\ldots,x_{(m)};\lambda).
$
Consequently, $\Psi$ determines  an $n$-codimensional foliation $\mathfrak{F}$ on $\mathbb{R}^s\times N^{m+1}$ whose leaves are the level sets of $\Psi$.

Let $x_{(1)}(t),\ldots,x_{(m)}(t)$ be particular solutions of a particular ${\bf Y}_{\hat\alpha}$. Since $\Phi$ is a $t$-dependent common superposition rule for the  $\{{\bf Y}_\alpha\}_{\alpha\in \Lambda}$, there exists for every particular solution $x_{(0)}(t)$ of ${\bf Y}_{\hat\alpha}$ a unique $\lambda \in N$ such that $x_{(0)}(t)=\Phi(t,x_{(1)}(t),\ldots,x_{(m)}(t),\lambda)$,  Hence,
$\Psi(t,x_{(0)}(t),\ldots, x_{(m)}(t))$ is constant for any $(m+1)$-tuple of particular solutions of ${\bf Y}_{\hat\alpha}$. Then, the foliation determined by $\Psi$ is invariant
under the permutation of its $(m+1)$ arguments $x_{(0)},\ldots,x_{(m)}$. Anyhow, it is worth noting, like in the case of the superposition rule (\ref{SupRicc}), that in general
$$
\Psi(t,x_{(0)},\ldots,x_{(i)},\ldots,x_{(j)},\ldots,x_{(m)})\neq \Psi(t,x_{(0)},\ldots,x_{(j)},\ldots,x_{(i)},\ldots,x_{(m)}).
$$
Differentiating
$\Psi(t,x_{(0)}(t),\ldots, x_{(m)}(t))$ relative to $t_\pi$ for $\pi= 1,\dots,s $, we obtain
\begin{equation}\label{Psifi}
\pd{\Psi^j}{t_\pi}+\sum_{a=0}^{m}\sum_{i=1}^nY_{\pi,\alpha}^i(t,x_{(a)}(t))\pd{\Psi^j}{x^i_{(a)}}=0,\qquad
j=1,\ldots,n,\quad \pi=1,\dots,s ,\quad \alpha\in\Lambda,
\end{equation}
where $\Psi(t,\cdot)=(\Psi^{1}(\cdot),\ldots,\Psi^{n}(\cdot))$. In consequence, $\Psi^1,\ldots,\Psi^n$ are common first-integrals for all the
vector fields $\{\widetilde Y^{[\pi]}_{\pi,\alpha}\}_{\pi=1,\ldots,s,\alpha\in\Lambda}$. %, i.e. $\widetilde Y^{[\pi]}_{\pi,\alpha}\Psi^ i=0$ for
%$i=1,\ldots,n$, $\pi=1,\ldots,s$, and every $\alpha\in\Lambda$.
Therefore, all the vector fields $\widetilde Y^{[\pi]}_{\pi,\alpha}$ are tangent to the
leaves of $\mathfrak{F}$.

Let us see that the foliation $\mathfrak{F}$ is horizontal relative to ${\rm pr}$. If $\mathfrak{F}_\lambda$ is the level set of
$\Psi$ corresponding to a certain $\lambda=(\lambda_1,\ldots,\lambda_n)\in N$ and
$(t,x_{(1)},\ldots,x_{(m)})\in\mathbb{R}^s\times N^m$, then there is only one point
$x_{(0)}\in N$ such that  $(t,x_{(0)},x_{(1)},\ldots,x_{(m)})\in\mathfrak{F}_\lambda$. In consequence, ${\rm pr}$
induces a local diffeomorphism between
$\mathfrak{F}_\lambda$ and $\mathbb{R}^s\times N^m$. %We will say that the foliation
%$\mathfrak{F}$ is {\it horizontal}  with respect to the projection $\pi$.

Let us prove the converse part: a foliation $\mathfrak{F}$ in $\mathbb{R}^s\times N^{m+1}$ that is horizontal relative to ${\rm pr}:\mathbb{R}^s\times N^{m+1}\rightarrow N^m$ defines a common $t$-dependent superposition rule. Indeed, given a point $x_{(0)}\in N$ and $m$ particular solutions,
$x_{(1)}(t),\ldots,x_{(m)}(t)$, for any system ${\bf Y_{\hat \alpha}}$ of the family (\ref{eq1}),  one has that  $x_{(0)}(t)$ is the unique curve in
$N$ such that the points of the curve
\begin{equation}\label{curve}
(t,x_{(0)}(t),x_{(1)}(t),\ldots, x_{(m)}(t))\subset
\mathbb{R}^s\times N^m
\end{equation}
belong to the same leaf as the point $(0,x_{(0)}(0),x_{(1)}(0),\ldots,x_{(m)}(0))$.
Therefore
$$
\Phi(t,x_{(1)}(t),\ldots,x_{(m)}(t);\lambda)=x_{(0)}(t).
$$
Recall that $(t,x_{(1)}(t),\ldots,x_{(m)}(t))$ is an integral submanifold of the prolongation of ${\bf Y}_{\hat\alpha}$
to $\mathbb{R}^s\times N^m$, i.e. $\widehat {\bf Y}^{(m)}_{\hat\alpha}$, by construction. As the foliation in $\mathbb{R}^s\times N^{m+1}$ is horizontal relative to ${\rm pr}$, the lift of the $\widehat {\bf Y}^{(m)}_{\hat\alpha}$ to $\mathbb{R}^s\times N^{m+1}$ is $\widehat {\bf Y}^{(m+1)}_{\hat \alpha}$, namely the prolongation of ${\bf Y}_{\hat \alpha}$ to $\mathbb{R}^s\times N^{m+1}$. Thus, $(t,x_{(0)}(t),\ldots,x_{(m)}(t))$ is an integral submanifold of $\widehat {\bf Y}^{(m+1)}_{\hat \alpha}$ and, in consequence, $(t,x_{(0)}(t))$ is a particular solution to ${\bf Y}_{\hat{\alpha}}$. This causes the mapping $\Phi:\mathbb{R}^s\times N^{m+1}\rightarrow N$ to be a superposition rule for the family of systems $\{{\bf Y}_\alpha\}_{\alpha\in \Lambda}$.

%Thus, it is only the horizontal foliation $\mathfrak{F}$ that
%really matters when the common $t$-dependent superposition rule is
%concerned. It is in a sense obvious, as
%composing $\Psi$ with a diffeomorphism on $N$ yields the same superposition rule.
\end{proof}

\section{Generalised PDE Lie--Scheffers Theorem.}\label{GLT}

Although Proposition \ref{Connection} offers an elegant geometric characterisation of families of systems of PDEs admitting a common $t$-dependent superposition rule, it is in general inadequate to determine the existence of such a superposition for a concrete family (\ref{eq1}). This leads us here to
characterise PDE Lie families via an easily verifiable
condition based on the properties of the $t$-dependent polyvector {fields. }% $\{{\bf Y}_\alpha\}_{\alpha\in\Lambda}$.
Such a criterion is formulated as the generalised PDE Lie--Scheffers Theorem. To prove it, let us show first the following lemma, which represents a generalisation to the realm of systems of PDEs of Lemma 1 in \cite{CGM07} and Lemma 9 in \cite{CGL09}. In the following $\delta_{\pi_j}^{\pi'}$ stands for the {\it Kronecker's delta}.

{\begin{lemma}\label{Prol} Let $Y_1,\ldots,Y_r$ be $t$-dependent vector fields on $N$ whose $t_\pi$-prolongations $\widetilde Y^{[\pi_1]}_1,\ldots,\widetilde Y^{[\pi_r]}_r$ to $\mathbb{R}^s\times N^{m+1}$, $\pi_1,\ldots,\pi_r=  1,\dots,s $, are such that the
${\rm pr}_*(\widetilde Y^{[\pi_j]}_j)$ are linearly independent at a generic point of $\mathbb{R}^s\times N^m$.
Then, if $b_1,\ldots,b_r\in C^{\infty}(\mathbb{R}^s\times N^{m+1})$, then
$$
\sum_{j=1}^r b_j\widetilde Y^{[\pi_j]}_j=\widehat Y^{(m+1)}\,\quad \left(\text{resp.}\ \sum_{j=1}^r b_j\widetilde Y^{[\pi_j]}_j=\widetilde Y^{[\pi_Y]},\,\,\pi_Y=1,\ldots,s\right)
$$
for a $t$-dependent vector field $Y$ on $N$ if and only if the functions $b_j$ depend on $t$ only and $\sum_{j=1}^r\delta_{\pi_j}^{\pi'}b_j=0$  (resp. $\sum_{j=1}^r\delta^{\pi'}_{\pi_j}b_j=\delta_{\pi_Y}^{\pi'}$) for $\pi'=1,\dots,s $.
\end{lemma}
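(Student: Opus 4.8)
The plan is to prove the equivalence by splitting the polyvector-field identity into its $\partial_{t_{\pi'}}$- and $\partial_{x^i_{(a)}}$-components and then extracting the dependence of the coefficients $b_j$ from the generic linear independence hypothesis. The converse (``if'') direction is immediate: writing each $t_{\pi_j}$-prolongation as $\widetilde Y_j^{[\pi_j]}=\partial_{t_{\pi_j}}+\widehat Y_j$, a combination $\sum_j b_j(t)\widetilde Y_j^{[\pi_j]}$ splits into a $t$-part $\sum_{\pi'}\big(\sum_j \delta_{\pi_j}^{\pi'}b_j\big)\partial_{t_{\pi'}}$ and an $x$-part $\widehat{\sum_j b_j Y_j}$. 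Since $b_j$ depends only on $t$, the latter is the prolongation of the $t$-dependent vector field $Y=\sum_j b_j Y_j$, while the $t$-part vanishes (resp.\ equals $\partial_{t_{\pi_Y}}$) precisely under the stated condition on $\sum_j \delta^{\pi'}_{\pi_j}b_j$.

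For the ``only if'' direction I would first compare components of $\sum_j b_j \widetilde Y_j^{[\pi_j]}=\widehat Y^{(m+1)}$ (resp.\ $=\widetilde Y^{[\pi_Y]}$). The $\partial_{t_{\pi'}}$-components give at once $\sum_j \delta^{\pi'}_{\pi_j} b_j=0$ (resp.\ $=\delta^{\pi'}_{\pi_Y}$), while the $\partial_{x^i_{(a)}}$-components give $\sum_j b_j\, Y^i_j(t,x_{(a)})=Y^i(t,x_{(a)})$ for every $a=0,\dots,m$ and every $i$. The remaining and central task is to show that these relations force each $b_j$ to be a function of $t$ alone.

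The key step is to differentiate the $x$-component relation for an index $a$ with respect to a variable $x^k_{(c)}$ with $c\neq a$: since neither side then depends on $x^k_{(c)}$, one obtains $\sum_j \pd{b_j}{x^k_{(c)}}\, Y^i_j(t,x_{(a)})=0$ for all $a\neq c$, and differentiating the $t$-relation gives $\sum_j \delta^{\pi'}_{\pi_j}\pd{b_j}{x^k_{(c)}}=0$. Taking $c=0$ (recall ${\rm pr}$ drops $x_{(0)}$), these relations say exactly that $\sum_j \pd{b_j}{x^k_{(0)}}\,{\rm pr}_*(\widetilde Y_j^{[\pi_j]})=0$ at the corresponding point of $\mathbb{R}^s\times N^m$, for each fixed $x_{(0)}$. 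The generic linear independence of the ${\rm pr}_*(\widetilde Y_j^{[\pi_j]})$ then yields $\pd{b_j}{x^k_{(0)}}=0$ on a dense open set, hence everywhere by continuity, so each $b_j$ is independent of $x_{(0)}$.

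To upgrade this to independence of every $x_{(c)}$ with $c\geq 1$, I would reuse the relation coming from $a=0$: setting $\beta_j:=\pd{b_j}{x^k_{(c)}}$, now known to be independent of $x_{(0)}$, the identity $\sum_j \beta_j\, Y^i_j(t,x_{(0)})=0$ holds for all values of $x_{(0)}$, so substituting $x_{(0)}\mapsto x_{(c)}$ supplies the otherwise missing $a=c$ equation $\sum_j \beta_j\, Y^i_j(t,x_{(c)})=0$. Together with the relations for $a\in\{1,\dots,m\}\setminus\{c\}$ and the $t$-relation, this again gives $\sum_j \beta_j\,{\rm pr}_*(\widetilde Y_j^{[\pi_j]})=0$, whence $\beta_j=0$; thus $b_j=b_j(t)$, and combined with the $t$-component identity the proof is complete. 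The ``resp.'' case is identical, since the constant right-hand side $\delta^{\pi'}_{\pi_Y}$ differentiates away and leaves relation (B) unchanged. I expect this final upgrade --- filling the $a=c$ gap by the substitution trick only after first clearing the $x_{(0)}$-dependence --- to be the main obstacle, since a naive one-shot application of linear independence stalls precisely on that missing component.
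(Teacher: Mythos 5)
Your proof is correct, and in the crucial ``only if'' step it takes a genuinely different route from the paper's. Both proofs start the same way: split the identity into $\partial_{t_{\pi'}}$- and $\partial_{x^i_{(a)}}$-components, note that the ``if'' direction is immediate, and use the generic linear independence of the ${\rm pr}_*(\widetilde Y^{[\pi_j]}_j)$ to show that the $b_j$ cannot depend on $x_{(0)}$ (the paper does this via uniqueness of the solution of the projected linear system in the unknowns $u_j$, you do it by differentiating with respect to $x^k_{(0)}$ and using linear independence plus continuity; these are essentially equivalent). The divergence is in how the $x_{(0)}$-independence gets upgraded to full $t$-dependence. The paper discards the $a=0$ component equations altogether and instead invokes a symmetry argument: diagonal prolongations are invariant under the $S_{m+1}$-action permuting $x_{(0)},\dots,x_{(m)}$, so by uniqueness the $b_j$ must be symmetric functions, and a symmetric function independent of $x_{(0)}$ depends on $t$ only. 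You never use the permutation symmetry; instead you exploit precisely the $a=0$ equations that the paper drops, differentiating them with respect to $x^k_{(c)}$ and then substituting $x_{(0)}\mapsto x_{(c)}$ --- legitimate because $\partial b_j/\partial x^k_{(c)}$ is already known to be $x_{(0)}$-independent --- to manufacture the missing $a=c$ relation, after which linear independence kills $\partial b_j/\partial x^k_{(c)}$. The paper's argument is slicker once one accepts the $S_{m+1}$-invariance and the attendant uniqueness claim; yours is more elementary and self-contained, using only the component equations and the stated hypothesis, and it makes explicit that the $a=0$ equation is exactly what compensates for the projection forgetting $x_{(0)}$. Your treatment of the ``resp.'' case (the constant $\delta^{\pi'}_{\pi_Y}$ differentiates away) is also correct and matches the paper's remark that the two cases are proved identically.
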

}
\begin{proof}
We will only prove the case $\sum_{j=1}^r b_j\widetilde Y^{[\pi_j]}_j=\widehat Y^{(m+1)}$ since the proof for the other one is identical. The expressions for the $\widetilde{Y}_j^{[\pi_j]}$ and $Y$ read in coordinates,
\begin{equation*}
    \widetilde Y^{[\pi_j]}_j=
    \frac{\partial}{\partial t_{\pi_j}}+\sum_{i=1}^n\sum_{a=0}^mA^i_{j}(t,x_{(a)})\pd{}{x^i_{(a)}},\qquad j=1,\dots,r ,\qquad Y=\sum_{i=1}^nB^i(t,x)\frac{\partial}{\partial x^i}
 \end{equation*}
 for certain functions $A_j^i,B^i: \mathbb{R}^s\times N\rightarrow \mathbb{R}$.
Then, $\sum_{j=1}^r b_j\widetilde Y^{[\pi_j]}_j=\widehat Y^{(m+1)}$ amounts to
{\footnotesize
\begin{multline*}
\sum_{j=1}^r\left(\sum_{i=1}^nb_j(t,x_{(0)},
\ldots,x_{(m)})A^i_{j}(t,x_{(a)})\pd{}{x^i_{(a)}}
+\sum_{{\pi'}=1}^s \delta_{\pi'}^{\pi_j}b_j(t,x_{(0)},\ldots,x_{(m)})\pd{}{t_{{\pi'}}}\right)=\sum_{i=1}^nB^i(t,x_{(a)})\frac{\partial}{\partial x_{(a)}^i},
\end{multline*}}for every $a=0,\ldots,m$. The above holds if and only if  there exist functions $B^i:\mathbb{R}^s\times N\rightarrow\mathbb{R}$,
with $i= 1,\dots,n $, such that
\begin{equation*}
\left\{\begin{aligned}
&\sum_{j=1}^r b_j(t,x_{(0)},\ldots,x_{(m)})A^i_j(t,x_{(a)})=B^i(t,x_{(a)}),\\
&\sum_{j=1}^r \delta_{\pi_j}^{\pi'}b_j(t,x_{(0)},\ldots,x_{(m)})=0,
\end{aligned}\right.\qquad \begin{gathered}a=0,\dots,m , \quad i=  1,\dots,n ,\\ {\pi'}= 1,\dots,s .\end{gathered}
\end{equation*}If the functions $b_1,\ldots,b_r$ are $t$-dependent only and
$\sum_{j=1}^r
\delta_{\pi'}^{\pi_j}b_j=0$ for every ${\pi'}= 1,\dots,s$, the above conditions are obeyed and $\sum_{j=1}^r b_j \widetilde Y^{[\pi_j]}_j$ is the diagonal prolongation to
$\mathbb{R}^s\times N^{m+1}$ of the $t$-dependent vector field
$Y=\sum_{i=1}^nB^i(t,x)\partial/\partial x^i.$

Conversely, if $\sum_{j=1}^r b_j \widetilde Y^{[\pi_j]}_j$ is a diagonal prolongation for a $t$-dependent vector field
on $N$, then the functions $b_j(t,x_{(0)},\ldots,x_{(m)})$, with $j=1,\ldots,r$, solve the system of
$s+ m\cdot n$ linear equations in the $r$ unknown variables $u_j$ given by:
\begin{equation}\label{GS}
%\left\{\begin{aligned}
%&
\sum_{j=1}^r u_j A^i_{j}(t,x_{(a)})=B^i(t,x_{(a)}),\qquad
%&
\sum_{j=1}^r \delta_{\pi_j}^{\pi'}u_j=0,
%\end{aligned}\right.
\end{equation}
where $a=1,\dots,m$, $i=1,\dots,n $, and ${\pi'}= 1,\dots,s $. Since the ${\rm pr}_*(\widetilde
Y^{[\pi_j]}_j)$, with $j=1,\dots,r$, are linearly independent by assumption, the solutions $u_\alpha$ are uniquely determined and they depend on the $B^i(t,x_{(a)})$ for $a=1,\ldots,m$ and $i=1,\ldots,n$.  Hence, the $u_j$ depend on the variables $\{t, x_{(1)},\ldots,x_{(m)}\}$ and they do not depend on $x_{(0)}$.
Since diagonal prolongations are invariant with respect to the symmetry group $S_{m+1}$ acting on
$N^{m+1}$ in the obvious way, the functions
$u_j=b_j(t,\ldots,x_{(m)})$, with $j=1,\dots,r$, must satisfy such a symmetry. Since they do not
depend on $x_{(0)}$, they cannot depend on the variables $\{x_{(1)},\ldots,x_{(m)}\}$ neither and they become functions
depending on $t$ only.
\end{proof}

\begin{theorem}{\bf (Generalised PDE Lie--Scheffers Theorem)} \label{MT}
The family of systems (\ref{eq1}) on $N$ admits a {\it common $t$-dependent superposition rule} if and only if the
vector fields of the family $\{\bar Y^{[\pi]}_{\pi,\alpha}\}_{\pi= 1,\dots,s ,\alpha\in\Lambda}$ can be written as
\begin{equation*}\label{MainDesc}
\bar Y^{[\pi]}_{\pi,\alpha}(t,x)=\sum_{j=1}^rb^{[\pi]}_{\alpha j}(t)\bar X^{[\pi_j]}_j(t,x),\qquad  \pi= 1,\dots,s ,\qquad \alpha\in\Lambda,
\end{equation*}
where the $b^{[\pi]}_{\alpha j}$ are $t$-dependent functions, the $\pi_j$ are certain integers between $1$ and $s$,   %, $\sum_{j=1}^r\delta^p_{n_j}b^{[k]}_{\alpha j}=1$ for every $p,k= 1,\dots,s $,
and the $X_1,\ldots, X_r$ are
$t$-dependent vector fields on $N$ satisfying
\begin{equation*}\label{condition}
[\bar X^{[\pi_j]}_j,\bar X^{[\pi_k]}_k](t,x)=\sum_{l=1}^rf_{jk}^l (t)\bar X^{[\pi_l]}_l(t,x),\qquad j, k= 1,\dots,r,
\end{equation*}
for some functions $f_{jk}^l :\mathbb{R}^s\rightarrow\mathbb{R}$, with {$j,k,l=1,\dots,r$.} We call the family of
autonomisations, $\bar X^{[\pi_1]}_1,\ldots,\bar X^{[\pi_r]}_r$, a {\sl system of generators of the PDE Lie family}.
\end{theorem}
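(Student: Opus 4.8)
The plan is to prove the equivalence by using Proposition~\ref{Connection} as a bridge between common $t$-dependent superposition rules and horizontal foliations, and then to translate the geometric tangency condition into the algebraic decomposition through the prolongation Lemmas~\ref{PureProl}, \ref{Aut} and \ref{Prol}. The guiding dictionary is: the decomposition $\bar Y^{[\pi]}_{\pi,\alpha}=\sum_j b^{[\pi]}_{\alpha j}(t)\bar X^{[\pi_j]}_j$ together with the closure relations corresponds to an involutive, horizontal distribution on $\mathbb{R}^s\times N^{m+1}$ spanned by the $t_\pi$-prolongations of a finite family of $t$-dependent vector fields, to which all the $\widetilde Y^{[\pi]}_{\pi,\alpha}$ are tangent. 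Throughout I will use the identity $\widetilde Y^{[\pi]}_{\pi,\alpha}=\partial/\partial t_\pi+\widehat{Y}_{\pi,\alpha}$, together with the observation that, for coefficients depending only on $t$, the autonomisation relation $\bar Y^{[\pi]}_{\pi,\alpha}=\sum_j b^{[\pi]}_{\alpha j}\bar X^{[\pi_j]}_j$ holds if and only if the same relation holds after replacing autonomisations by prolongations, since $\widehat{\,\cdot\,}$ is linear over functions of $t$ and the $\partial/\partial t$-parts match precisely under the constraint $\sum_{j:\pi_j=\nu}b^{[\pi]}_{\alpha j}=\delta^\nu_\pi$ of Lemma~\ref{Prol}.

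I would begin with the easier converse. Assuming the decomposition and the closure relations, I would prolong the generators to $\mathbb{R}^s\times N^{m+1}$ for $m$ large and consider the distribution $\mathcal D$ spanned by the $\widetilde X^{[\pi_j]}_j$. By Lemma~\ref{Aut} the prolongations obey the very same closure relations as the autonomisations, so $\mathcal D$ is involutive and, by Frobenius, integrates to a foliation $\mathfrak F$. Choosing $m$ large enough guarantees that the ${\rm pr}_*(\widetilde X^{[\pi_j]}_j)$ are linearly independent at a generic point, i.e. $\mathfrak F$ is horizontal relative to ${\rm pr}$; this is the standard fact that diagonal prolongations of a finite-dimensional Lie algebra of vector fields become independent once $m$ is sufficiently large. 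Because the $b^{[\pi]}_{\alpha j}$ depend only on $t$, the autonomisation relation lifts verbatim to $\widetilde Y^{[\pi]}_{\pi,\alpha}=\sum_j b^{[\pi]}_{\alpha j}(t)\widetilde X^{[\pi_j]}_j$, so every $\widetilde Y^{[\pi]}_{\pi,\alpha}$ is tangent to $\mathfrak F$, and the converse part of Proposition~\ref{Connection} yields a common $t$-dependent superposition rule.

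For the forward direction I would start from the horizontal foliation $\mathfrak F$ furnished by Proposition~\ref{Connection}, whose tangent distribution $D=T\mathfrak F$ is involutive, of rank $s+mn$, and contains all $\widetilde Y^{[\pi]}_{\pi,\alpha}$. By Lemma~\ref{PureProl} every bracket of these $t_\pi$-prolongations is the diagonal prolongation $\widehat Z$ of a $t$-dependent vector field $Z$ on $N$, and by involutivity it is again tangent to $\mathfrak F$. I would then collect the $t$-dependent vector fields on $N$ whose diagonal prolongations are tangent to $\mathfrak F$; this set $L$ is a Lie algebra, closure under bracket following from $[\widehat Z,\widehat Z']=\widehat{[Z,Z']}$ and involutivity. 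Finite-dimensionality is the crucial point: evaluating at a generic point $(t_0,x^0_{(0)},\ldots,x^0_{(m)})$, horizontality forces the $x_{(0)}$-component of a tangent $\widehat Z$ to be determined by its values at $x^0_{(1)},\ldots,x^0_{(m)}$, so a $Z\in L$ vanishing at those $m$ points vanishes at $x^0_{(0)}$ as well; letting the leaf (the parameter $\lambda$) vary sweeps $x_{(0)}$ over an open set, whence $Z\equiv 0$. Thus evaluation at $m$ generic points is injective on $L$ and $\dim L\le mn$. Taking a basis of $L$, adjoining the fields $\partial/\partial t_\pi$, that is the autonomisations $\bar{0}^{[\pi]}$ of the zero field, which allow a pure vector field $Z$ to be written as $\bar Z^{[\pi]}-\bar{0}^{[\pi]}$, together with the pure parts $Y_{\pi,\alpha}$, I obtain the finite family $X_1,\ldots,X_r$ with labels $\pi_1,\ldots,\pi_r$. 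Each $\widetilde Y^{[\pi]}_{\pi,\alpha}$, being tangent to $\mathfrak F$, is a $C^\infty(\mathbb{R}^s\times N^{m+1})$-combination of the generator prolongations; Lemma~\ref{Prol} then upgrades the coefficients to functions of $t$ alone and imposes $\sum_{j:\pi_j=\nu}b^{[\pi]}_{\alpha j}=\delta^\nu_\pi$, which is exactly the desired decomposition, while Lemma~\ref{Aut} transports the involutivity of $D$ into the closure relations $[\bar X^{[\pi_j]}_j,\bar X^{[\pi_k]}_k]=\sum_l f^l_{jk}(t)\bar X^{[\pi_l]}_l$.

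The main obstacle I anticipate is the bookkeeping of the directional labels $\pi_j$, which is genuinely new in the PDE case $s>1$. Because every bracket of autonomisations is a \emph{pure} vector field, its $\partial/\partial t$-parts cancelling, the structure functions are forced to satisfy $\sum_{l:\pi_l=\nu}f^l_{jk}(t)=0$, and one must arrange the generating set, by including the fields $\partial/\partial t_\pi$ and enough copies with matching labels, so that these pure brackets and the pure parts of the $Y_{\pi,\alpha}$ are absorbed while the $\partial/\partial t$-constraints of Lemma~\ref{Prol} remain consistent. Establishing simultaneously the finite-dimensionality of the generated algebra and this $\partial_t$-cancellation, so that both the decomposition and the closure relations hold with coefficients depending only on $t$, is where the argument is most delicate; for $s=1$ it collapses to the bookkeeping already handled in \cite{CGL09}.
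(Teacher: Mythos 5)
Your converse half is essentially the paper's argument and is sound, up to one small omission: when the involutive distribution spanned by the $\widetilde X^{[\pi_j]}_j$ has codimension larger than $n$, the resulting foliation must first be enlarged to an exactly $n$-codimensional horizontal foliation before Proposition \ref{Connection} can be invoked. The genuine gap is in the forward direction, precisely at the two points you yourself flag as ``delicate''. First, your set $L$ of $t$-dependent vector fields $Z$ whose diagonal prolongations $\widehat Z$ are tangent to $\mathfrak{F}$ is closed under multiplication by arbitrary functions of $t$ (since $\widehat{fZ}=f\,\widehat Z$ for $f\in C^\infty(\mathbb{R}^s)$), so $L$ is a $C^\infty(\mathbb{R}^s)$-module and is infinite-dimensional over $\mathbb{R}$ whenever it is nonzero; ``taking a basis of $L$'' is impossible. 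Accordingly, your injectivity argument breaks: vanishing of $Z$ at the $m$ points $(t_0,x^0_{(1)}),\dots,(t_0,x^0_{(m)})$ at the single time $t_0$ forces, via horizontality, only $Z_{t_0}\equiv 0$, not $Z\equiv 0$; for instance $\bigl(t_1-(t_0)_1\bigr)Z\in L$ vanishes at all those points for any $Z\in L$. The finiteness in Theorem \ref{MT} is not the dimension of a Lie algebra of $t$-dependent vector fields but the pointwise rank of a distribution: the paper takes the generalised distribution $\mathcal{D}$ spanned by the iterated Lie brackets of the $\widetilde Y^{[\pi]}_{\pi,\alpha}$ themselves (all tangent to $\mathfrak{F}$, whence ${\rm rank}\,\mathcal{D}\le mn+s$) and selects finitely many of those brackets spanning $\mathcal{D}$ near a generic point.

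Second, adjoining the fields $\partial/\partial t_\pi=\bar 0^{[\pi]}$ as generators is fatal: these are in general \emph{not} tangent to $\mathfrak{F}$, so the closure relations for your generating family cannot be extracted from the involutivity of $T\mathfrak{F}$. Concretely, $[\partial_{t_\pi},\widehat Z]=\widehat{\partial_{t_\pi}Z}$ need not be tangent to $\mathfrak{F}$ (differentiating the tangency condition $\widehat Z\,\Psi^j=0$ gives $\widehat{\partial_{t_\pi}Z}\,\Psi^j=-\widehat Z\,\partial_{t_\pi}\Psi^j$, which has no reason to vanish), so these brackets escape the span of your generators and the second displayed equation of the theorem is never established. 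The paper's resolution avoids $\partial_{t_\pi}$ altogether: among the chosen bracket generators, those that are pure prolongations $\widehat Z$ (and by Lemma \ref{PureProl} they all are, apart from the original $\widetilde Y^{[\pi]}_{\pi,\alpha}$) are turned into $t_\pi$-prolongations by \emph{adding} to each one of the $t_\pi$-prolongations $\widetilde X^{[\pi]}_\pi$ already in the set, using $\widehat Z+\widetilde X^{[\pi]}_\pi=\widetilde{(Z+X_\pi)}^{[\pi]}$; the modified set still spans $\mathcal{D}$ and remains tangent to $\mathfrak{F}$. Involutivity of $\mathcal{D}$ then gives closure with coefficients in $C^\infty(\mathbb{R}^s\times N^{m+1})$, and Lemmas \ref{Prol} and \ref{Aut} — applicable because tangency to the horizontal foliation makes linear independence upstairs equivalent to linear independence of the ${\rm pr}$-projections — upgrade both these coefficients and those expressing the $\widetilde Y^{[\pi]}_{\pi,\alpha}$ to functions of $t$ alone, which is exactly the pair of identities claimed in Theorem \ref{MT}.
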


\begin{proof}

Let us prove the direct part. Assume that the family of systems (\ref{eq1}) admits a {common $t$-dependent superposition rule}. Let $\mathfrak{F}$ be its associated $n$-codimensional horizontal foliation ensured by Proposition \ref{Connection}. Again in view of Proposition \ref{Connection}, the vector fields
$\{\widetilde Y^{[\pi]}_{\pi,\alpha}\}_{\pi= 1,\dots,s ,\alpha\in\Lambda}$ are tangent to the leaves of the foliation $\mathfrak{F}$ and
span a distribution $\mathcal{D}_0$ on $\mathbb{R}^s\times N^{m+1}$. Let $A$ be the Lie algebra spanned by the $\{\widetilde Y^{[\pi]}_{\pi,\alpha}\}_{\pi= 1,\dots,s ,\alpha\in\Lambda}$ and all their possible Lie brackets, i.e.
\begin{equation}\label{family}
\widetilde Y^{[\pi]}_{\pi,\alpha}, \,\, [\widetilde Y^{[\pi']}_{\pi',\alpha},\widetilde Y^{[\pi'']}_{\pi'',\beta}],\,\,
[\widetilde Y^{[\pi]}_{\pi,\alpha},[\widetilde Y^{[\pi']}_{\pi',\beta},\widetilde Y^{[\pi'']}_{\pi'',\gamma}]],\ldots\,%[\widetilde Y^{[k]}_\alpha,%[\widetilde Y^{[l]}_\beta,
%[\widetilde Y^{[m]}_\gamma,\widetilde Y^{[n]}_\delta]]],\ldots
\quad\,\pi,\pi',\pi'',\ldots= 1,\dots,s ,\,\, \alpha,\beta,\gamma,\ldots\in \Lambda.
\end{equation}
All the elements of $A$ are tangent to the leaves of $\mathfrak{F}$. Hence, there exist
up to $m\cdot n+s$ linearly independent ones at a generic point of $\mathbb{R}^s\times N^{m+1}$ and
 they must span an involutive generalised distribution $\mathcal{D}$ whose leaves are of dimension $r\leq
m\cdot n+s$.

Take now a set $\mathcal{A}$ of elements of $A$ that are linearly independent at a generic point and span the distribution $\mathcal{D}$ in a neighbourhood of a regular point of this foliation. By construction, at least one of
them must be of the form $\widetilde X_\pi^{[\pi]}$ for a certain $t$-dependent vector field $X_\pi$ on $N$ and  every possible $\pi= 1,\dots,s $.
In view of the form of the family (\ref{family}), those elements of $\mathcal{A}$ not being $t_\pi$-prolongations
are just prolongations. Therefore, if we add some $\widetilde X^{[\pi]}_\pi$ to those elements of $\mathcal{A}$ not being just
prolongations, we can redefine $\mathcal{A}$ in such a way that its elements are
$t_\pi$-prolongations $\widetilde X^{[\pi_1]}_1,\ldots,\widetilde X^{[\pi_r]}_r$ for certain $\pi_1,\ldots, \pi_r\in  1,\dots,s $ and $r\leq m\cdot n+s$. In consequence, $\mathcal{D}$ is
locally spanned near regular points by $t_\pi$-prolongations. As
 $\mathcal{D}$ is involutive, there exist $r^3$ real functions $f_{jk}^l $, with
$j,k,l=1,\dots,r,$ on $\mathbb{R}^s\times N^{m+1}$ such that
$$
[\widetilde X^{[\pi_j]}_j,\widetilde X^{[\pi_k]}_k]=\sum_{l=1}^rf_{jk}^l \widetilde X^{[\pi_l]}_l, \qquad j,k= 1,\dots,r.
$$
The left side of the above equalities are prolongations and their projections onto $\mathbb{R}^s\times N^m$ are linearly independent at each point because the leaves of $\mathfrak{F}$ are locally diffeomorphic among themselves and their projections to $\mathbb{R}^s\times N^m$. Then, Lemma \ref{Prol} ensures that $r^3$ functions $f_{jk}^l $ depend on time only and $\sum_{l=1}^n\delta^\pi_{\pi_l}f_{jk}^l =0$ for every $\pi=1,\dots,s $ and $j,k=1,\ldots,r$. Next, Lemma
\ref{Aut} implies that
$$
[\bar X^{[\pi_j]}_j,\bar X^{[\pi_k]}_k](t,x)=\sum_{l=1}^rf_{jk}^l (t)\bar X^{[\pi_l]}_l(t,x), \qquad j,k=1,\ldots,r.
$$
Since the vector fields $\{\widetilde Y^{[\pi]}_{\pi,\alpha}\}_{\alpha\in\Lambda,\pi=1,\ldots,s}$ take values  in the distribution $\mathcal{D}$, there exist some functions $b^{[\pi]}_{\alpha j}\in C^{\infty}(\mathbb{R}^s\times N^{m+1})$ such that $\widetilde Y^{[\pi]}_{\pi,\alpha}=\sum_{j=1}^rb^{[\pi]}_{\alpha j}\widetilde X^{[\pi_j]}_j$ for every $\alpha\in\Lambda$ and $\pi= 1,\dots,s $. In consequence, Lemma \ref{Prol} ensures that the functions $b^{[\pi]}_{\alpha j}$ depend only on $t$, namely $b^{[\pi]}_{\alpha j}=b^{[\pi]}_{\alpha j}(t)$. Therefore,
$$
\widetilde Y^{[\pi]}_{\pi,\alpha}=\sum_{j=1}^rb^{[\pi]}_{\alpha j}\widetilde X^{[\pi_j]}_j\Longrightarrow\bar Y^{[\pi]}_\alpha(t,x)=\sum_{j=1}^rb^{[\pi]}_{\alpha j}(t)\bar X^{[\pi_j]}_j(t,x),\quad \pi= 1,\dots,s,\quad \alpha\in\Lambda.
$$

Let us now turn to proving the converse of our theorem. Assume that
$$
\bar Y^{[\pi]}_{\pi,\alpha}(t,x)=\sum_{j=1}^rb^{[\pi]}_{\alpha j}(t) \bar X^{[\pi_j]}_j(t,x)
$$
for certain functions $b_{\alpha j}^{[\pi]}$ and $t_\pi$-autonomisations $\bar X^{[\pi_1]}_1,\ldots,\bar X^{[\pi_r]}_r$ such that
$$
[\bar X^{[\pi_j]}_j,\bar X^{[\pi_k]}_k](t,x)=\sum_{l=1}^rf_{jk}^l (t)\bar X^{[\pi_l]}_l(t,x), \qquad j,k= 1,\dots,r
$$
for certain $t$-dependent functions $f^j_{kl}$, with $j,k,l=1,\ldots,r$.
Lemma
\ref{Aut} ensures that the $t_\pi$-prolongations $\widetilde X^{[\pi_1]}_1,\ldots, \widetilde X^{[\pi_r]}_r$ to every $\mathbb{R}^s\times N^m$ span an involutive distribution
$\mathcal{D}$ for any $m$. Furthermore, the rank of $\mathcal{D}$
must be not greater than $r$ and therefore, for $m$ big enough, this distribution is at least $n$-codimensional and
it gives rise to a foliation $\mathfrak{F}_0$ which is horizontal with respect to the projection ${\rm pr}$.
If the codimension of $\mathfrak{F}_0$ is bigger than $n$, then $\mathfrak{F}_0$ can be enlarged so as to obtain an
$n$-codimensional foliation $\mathfrak{F}$, still horizontal with respect to the map ${\rm pr}$. The foliation $\mathfrak{F}$ leads in view of Proposition \ref{Connection} to a
common $t$-dependent superposition rule for the family (\ref{eq1}).
\end{proof}

\begin{note} When $\{{\bf Y}_\alpha\}_{\alpha\in \Lambda}$ is a standard family of $t$-dependent vector fields, Theorem \ref{MT} retrieves the theorem characterising Lie families given in \cite[Theorem 10]{CGL09}.
\end{note}

The following corollary applies the above theorem to PDE Lie systems to extend the Lie condition \cite{Dissertationes} to systems of PDEs.

\begin{corollary}\label{CorFin} Every family of $t$-dependent systems of PDEs on $N$ taking values in a Vessiot---Guldberg Lie algebra $V$ admits an autonomous, i.e. $t$-independent, common superposition rule depending on $m$ particular solutions with $\dim V\leq mn$.
\end{corollary}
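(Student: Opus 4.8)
The plan is to specialise the Generalised PDE Lie--Scheffers Theorem (Theorem \ref{MT}) to the case where the family $\{\mathbf{Y}_\alpha\}_{\alpha\in\Lambda}$ all take values in a single finite-dimensional Vessiot--Guldberg Lie algebra $V$ of vector fields on $N$, and then to verify that the resulting common superposition rule can be chosen $t$-independent. First I would fix a basis $X_1,\dots,X_r$ of $V$, where $r=\dim V$, with constant structure constants $[X_j,X_k]=\sum_{l=1}^r c_{jk}^l X_l$. Since each $\mathbf{Y}_\alpha$ takes values in $V$, the PDE Lie--Scheffers Theorem of Section 4 gives an expansion $\mathbf{Y}_\alpha=\sum_{\pi=1}^s\sum_{l=1}^r b_{\pi,l}^\alpha(t)X_l\otimes e^\pi$, so that the $t_\pi$-autonomisations satisfy $\bar Y^{[\pi]}_{\pi,\alpha}(t,x)=\partial_{t_\pi}+\sum_{l=1}^r b_{\pi,l}^\alpha(t)X_l$.

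The key step is to produce, from the basis $X_1,\dots,X_r$, a system of generators in the sense of Theorem \ref{MT} whose structure functions $f_{jk}^l(t)$ are in fact the constants $c_{jk}^l$. I would take the $t_\pi$-autonomisations $\bar X_j^{[\pi_j]}=\partial_{t_{\pi_j}}+X_j$ and compute $[\bar X_j^{[\pi_j]},\bar X_k^{[\pi_k]}]$. Because the $X_j$ are $t$-independent vector fields on $N$ and the $\partial_{t_\pi}$ commute with everything on $N$, the bracket collapses to $[X_j,X_k]=\sum_{l=1}^r c_{jk}^l X_l$, which has no $\partial_t$ component. One must therefore arrange the generators so that this purely spatial bracket can be re-expressed in the form $\sum_l f_{jk}^l(t)\bar X_l^{[\pi_l]}$; the natural device is to include a generator whose spatial part is zero (for instance $\partial_{t_\pi}$ itself, i.e.\ the $t_\pi$-autonomisation of the zero vector field) so that the $\partial_t$-components can be cancelled while the constraint $\sum_l\delta^{\pi}_{\pi_l}f_{jk}^l=0$ of Lemma \ref{Prol} is met. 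This reproduces exactly the hypotheses of Theorem \ref{MT} with $t$-independent coefficients, and hence yields a common $t$-dependent superposition rule depending on $m$ particular solutions.

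It then remains to extract the bound $\dim V\le mn$ and to upgrade the superposition rule to a genuinely $t$-independent one. For the dimension count I would follow the construction in the converse part of the proof of Theorem \ref{MT}: the prolongations $\widetilde X_1^{[\pi_1]},\dots,\widetilde X_r^{[\pi_r]}$ span an involutive distribution whose rank is controlled by $r=\dim V$, and this distribution becomes at least $n$-codimensional on $\mathbb{R}^s\times N^{m+1}$ precisely when $m$ is large enough that the $m$ copies of the $V$-orbits separate points; the sharp requirement is that the $r$ generators project to linearly independent vectors at a generic point of $N^m$, which is possible as soon as $\dim V=r\le mn$. For the $t$-independence, the crucial observation is that since the structure functions $c_{jk}^l$ are constants and the spatial generators $X_1,\dots,X_r$ do not depend on $t$, the horizontal foliation $\mathfrak{F}$ can be built from the distribution spanned by the prolongations $\widehat X_1,\dots,\widehat X_r$ of the $X_j$ to $N^{m+1}$ alone, without invoking the $\partial_{t_\pi}$ directions; its leaves are the orbits of the (local) action of the connected group integrating $V$ acting diagonally on $N^{m+1}$, and these are manifestly $t$-independent. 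The defining function $\Psi$, and hence $\Phi$, can then be taken independent of $t$.

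The main obstacle I anticipate is handling the $\partial_t$-components cleanly when writing the brackets of autonomisations in the required form, since the naive generators $\partial_{t_{\pi_j}}+X_j$ give a bracket with no $\partial_t$ part while the right-hand side $\sum_l f_{jk}^l(t)\bar X_l^{[\pi_l]}$ carries the $\partial_{t_{\pi_l}}$ of each generator; this is exactly the role of the constraint $\sum_l\delta^{\pi}_{\pi_l}f_{jk}^l=0$ in Lemma \ref{Prol}, and verifying that the constant coefficients $c_{jk}^l$ can be distributed among generators so as to satisfy it (by adjoining trivial generators of the form $\partial_{t_\pi}$) is the delicate bookkeeping. Once this is arranged, everything else reduces to invoking Theorem \ref{MT} and reading off that the constancy of $c_{jk}^l$ and the $t$-independence of $X_1,\dots,X_r$ force the foliation, and therefore the superposition rule, to be autonomous.
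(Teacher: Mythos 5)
Your proposal is correct and follows essentially the same route as the paper: it invokes the converse part of Theorem \ref{MT} with generators given by the autonomisations of a basis of $V$ together with the trivial generators $\partial_{t_\pi}$ (which is exactly how the paper closes the bracket relations and meets the constraint $\sum_l\delta^{\pi}_{\pi_l}f_{jk}^l=0$), obtains $\dim V\leq mn$ from generic linear independence of the projected generators, and deduces autonomy of $\Phi$ from the fact that the $\partial_{t_\pi}$ are tangent to the leaves, so that $\Psi$ is $t$-independent. Your description of the leaves as orbits of the diagonal action of the group integrating $V$ is just a slightly more explicit phrasing of the same observation.
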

\begin{proof} Let us apply the converse part of the proof of Theorem \ref{MT}. If $\{Y_1,\ldots, Y_r\}$ is a basis of $V$, then the distribution $\mathcal{D}$ on $\mathbb{R}^s\times N^m$ is spanned by the $t_\pi$-prolongations
\begin{equation}\label{DisD}
\frac{\partial}{\partial_{t_1}}+\sum_{a=1}^mY_i(x_{(a)}),\frac{\partial}{\partial_{t_1}},\ldots, \frac{\partial}{\partial_{t_s}},\qquad i=1,\ldots,r.
\end{equation}
These vector fields span a Lie algebra of dimension $s+r$. To obtain a common $t$-dependent superposition rule, $m$ must be such that the vector fields (\ref{DisD}) must be linearly independent at a generic point on $\mathbb{R}^s\times N^m$. Hence, $s+r\leq s+m\cdot n$ and $r\leq m\cdot n$ as in the case of standard Lie systems. The $n$-codimensional foliation on $\mathbb{R}^s\times N^{m+1}$ obtained by the $t_\pi$-prolongations to this manifold of (\ref{DisD}) is such that the vector fields $\partial_{t_i}$, with $i=1,\ldots,r$ are tangent to the leaves. In this case, the mapping $\Psi:\mathbb{R}^s\times N^{m+1}\rightarrow \mathbb{R}^n$ is independent of the
variables $t_1,\ldots,t_s$ and the induced superposition rule $\Phi$ obtained from $\Psi$ becomes autonomous.
\end{proof}

% \begin{example} Since PDE Lie systems related to a Vessiot--Guldberg Lie algebra give rise to a PDE Lie family admitting a common superposition rule, one has that they must satisfy our generalised PDE Lie--Scheffers Theorem. In fact,
% the condition (\ref{LieDecom}) implies that
% $$
% \bar{\bf X}_k=\left(\partial_{t_\pi}+\sum_{l=1}^rb_{k,l}(t)Y_l\right)\otimes e^k,
% $$
% for certain $t$-dependent functions $b_{k,l}$ and vector fields $Y_1,\ldots,Y_r$ that span a finite-dimensional Lie algebra. Hence, the elements of the PDE Lie family satisfy the generalised PDE Lie--Scheffers Theorem.
% \end{example}

\section{Quasi-Lie systems and schemes for PDEs}\label{Theory}
The theory of quasi-Lie schemes \cite{CGL08,Dissertationes} provides results on the transformation properties of $t$-dependent
vector fields by a certain kind of $t$-dependent changes of variables associated with generalised $t$-flows of $t$-dependent vector fields. This can be employed to analyse the integrability properties and time-dependent superposition rules of systems of ODEs \cite{CGL08,CLL08Emd,CL08Diss}.
This section generalises the theory of quasi-Lie schemes to PDES and relates it to the theory of PDE Lie families \cite{CGL09}. Let us start by defining quasi-Lie systems.

\begin{definition}{\rm A {\it quasi-Lie system} is a pair $({\bf X},g)$, where ${\bf X}$ is a $t$-dependent polyvector field on $N$ (the {\it system}) and $g$ is a generalised $t$-flow  on $N$ (the {\it control}), satisfying that
$g_\di \bX$ is a PDE Lie system.}
\end{definition}

Observe that since $g_\di \bX$ is a PDE Lie system, is therefore integrable by assumption and $\bX$ must be integrable also.
The control $g$ of a quasi-Lie system $({\bf X},g)$ allows for the construction of one of the $t$-dependent superposition rules associated with ${\bf X}$ as shown in the following theorem.

\begin{theorem}\label{t4} Every quasi-Lie system $(\bX,g)$ on $N$ admits a
$t$-dependent superposition rule $\Upsilon:\mathbb{R}^s\times N^{m+1}\rightarrow N$ given by
\begin{equation}\label{eq9}
x=\Upsilon(t,x_{(1)},\ldots, x_{(m)};\lambda)=g_t^{-1}\circ \Phi(g_t(x_{(1)}),\ldots,g_t(x_{(m)});\lambda),
\end{equation}
where $\Phi:N^{m}\times N\rightarrow N$ is a superposition rule for the PDE Lie
system $g_\di\bX$.
\end{theorem}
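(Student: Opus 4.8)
The plan is to transport the superposition rule $\Phi$ of the PDE Lie system $g_\di\bX$ back to $\bX$ by means of the control $g$, using the one-to-one correspondence between the integral sections of $\bX$ and those of $g_\di\bX$ furnished by Theorem \ref{t2}. First I would recall that, by Theorem \ref{t2}, the integral sections of $g_\di\bX$ are exactly the curves $t\mapsto g_t(\gamma(t))$ with $\gamma$ an arbitrary integral section of $\bX$. Since each $g_t$ is a diffeomorphism of $N$, the assignment sending $\gamma$ to $t\mapsto g_t(\gamma(t))$ is a bijection between the particular solutions of $\bX$ and those of $g_\di\bX$, whose inverse sends a solution $\tilde\gamma$ of $g_\di\bX$ to $t\mapsto g_t^{-1}(\tilde\gamma(t))$. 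By hypothesis $g_\di\bX$ is a PDE Lie system, so it admits a superposition rule $\Phi\colon N^m\times N\to N$.

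Next I would take a generic family of particular solutions $x_{(1)}(t),\ldots,x_{(m)}(t)$ of $\bX$ and set $y_{(a)}(t):=g_t(x_{(a)}(t))$ for $a=1,\ldots,m$, which are particular solutions of $g_\di\bX$ by the first step; because every $g_t$ is a diffeomorphism, the genericity of the $x_{(a)}$ is inherited by the $y_{(a)}$. If $x(t)$ denotes an arbitrary solution of $\bX$, then $y(t):=g_t(x(t))$ is a solution of $g_\di\bX$, and the superposition-rule property of $\Phi$ yields a unique $\lambda\in N$ with
\begin{equation*}
y(t)=\Phi(y_{(1)}(t),\ldots,y_{(m)}(t);\lambda).
\end{equation*}
Applying $g_t^{-1}$ to both sides and substituting the definitions of $y$ and of the $y_{(a)}$ gives
\begin{equation*}
x(t)=g_t^{-1}\!\left(\Phi(g_t(x_{(1)}(t)),\ldots,g_t(x_{(m)}(t));\lambda)\right)=\Upsilon(t,x_{(1)}(t),\ldots,x_{(m)}(t);\lambda),
\end{equation*}
which is precisely the claimed formula (\ref{eq9}).

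The only point requiring care is to verify that $\Upsilon$ meets Definition \ref{CSRDef}, i.e. that the expression above recovers \emph{every} solution $x(t)$ of $\bX$ as $\lambda$ ranges over $N$; this is immediate from the bijection of the first step together with the corresponding completeness property of $\Phi$, since each $x(t)$ corresponds to a unique solution $y(t)$ of $g_\di\bX$, and $\Phi$ reproduces $y(t)$ for a suitable $\lambda$. I expect no substantial obstacle, as the whole argument is a clean transport of structure along the $t$-dependent diffeomorphisms $g_t$; the most delicate bookkeeping is merely ensuring that \emph{generic} and \emph{arbitrary} families of solutions are matched correctly under $g_t$, which is guaranteed by the diffeomorphism property at each fixed $t$.
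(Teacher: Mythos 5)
Your proposal is correct and follows essentially the same route as the paper's own proof: push the particular solutions of $\bX$ forward by $g_t$ to solutions of $g_\di\bX$ (via Theorem \ref{t2}), apply the superposition rule $\Phi$, and pull back by $g_t^{-1}$. Your version is slightly more explicit about the bijection between solution sets and the preservation of genericity, but the argument is the same transport of structure along the control $g$.
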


\begin{proof}
Since $g_\di{\bf X}$ is a PDE Lie system with a superposition rule $\Phi$, its general solution, $\bar x(t)$, can be written as $\bar x(t)=\Phi(\bar x_{(1)}(t),\ldots,\bar x_{(m)}(t);\lambda)$ for a generic family of particular solutions $\bar x_{(1)}(t),\ldots, \bar x_{(m)}(t)$ of $g_\di {\bf X}$ and { $\lambda\in N$}. Since $g^{-1}$ maps $g_\di {\bf X}$ onto ${\bf X}$, the general solution, $x_{(0)}(t)$, of the system ${\bf X}$ can be brought into the form $x_{(0)}(t)=g^{-1}_t(\bar x(t))$. Moreover, if $x_{(1)}(t),\ldots,x_{(m)}(t)$ are particular solutions of ${\bf X}$, then $g_t(x_{(i)}(t))$, with $i=1,\ldots,m$, are particular solutions to $ g_\di{\bf X}$. Hence, the general solution of ${\bf X}$ can be written as follows
\begin{equation}\label{e8}
x_{(0)}(t)=g_t^{-1}(\bar x(t))=g_t^{-1}\circ \Phi(g_t(x_{(1)}(t)),\ldots,g_t(x_{(m)}(t)),\lambda),\qquad \lambda\in N,
\end{equation}
in terms of a generic family $x_{(1)}(t),\ldots,x_{(m)}(t)$ of particular solutions to ${\bf X}$. Hence,  (\ref{eq9}) becomes a {\it $t$-dependent superposition rule} for ${\bf X}$.
\end{proof}
The $t$-dependent superposition rule (\ref{eq9}) for quasi-Lie systems is meaningful in obtaining the
general solution of a system of PDEs $\bX$ provided the generalised $t$-flow $g$ is explicitly known.
% An alternative abstract
% definition of a quasi-Lie system as a $t$-dependent polyvector field $\bX$ for which there exists a generalised
% flow $g$ such that $g_\di\bX$ is a PDE Lie system does not have much sense, as every $\bX$ would be a quasi-Lie
% system in this
% context. For instance, given a $t$-dependent polyvector field $\bX$, the pair $(\bX, (g^\bX)^{-1})$ is a quasi-Lie
% system because $(g^\bX)_t^{-1}\circ g_t^\bX={\rm id}_N$, thus $(g^\bX)^{-1}_\di \bX=0$, which is a PDE Lie system
% trivially. On the other hand, finding $(g^\bX)^{-1}$ is nothing but solving our system $\bX$ completely, which amounts to our original problem.
In practice, it is crucial to obtain a control $g$ from a
system of PDEs that can be integrated effectively \cite{CGL08}. To obtain it, we will extend the theory of quasi-Lie schemes to systems of PDEs. The notion of a quasi-Lie scheme remains as in the case of the study of ODEs \cite{CGL08}.

%This will also provide a geometrical understanding of many {\it ad hoc} methods of integration appearing in the literature, as illustrated by the examples of this work and previous works in quasi-Lie schemes \cite{CGL08,CLL08Emd}.

\begin{definition}\label{QLscheme}{\rm A {\it quasi-Lie scheme} is a pair $(W,V)$ of finite-dimensional real vector spaces of vector fields on $N$ satisfying that
$$
W\subset V,\qquad [W,W]\subset W,\qquad [W,V]\subset V.
$$

Meanwhile, the quasi-Lie scheme must be now attached to more general structures than in \cite{CGL08} to tackle the study of PDEs. This is accomplished in the following.

Consider the family $V(\mathbb{R}^s)$ of
$t$-dependent polyvector fields ${\bf X}=\sum_{\pi=1}^sX_\pi\otimes e^\pi$ on $N$ satisfying the ZCC condition and such that each $({X}_\pi)_t$, with $t\in\R^s$, takes values in $V$. The elements of $V(\mathbb{R}^s)$ can be parametrised by a family of functions on $\mathbb{R}^s$. More specifically, take a basis
 $\{Y_1,\dots,Y_r\}$ of $V$. Every ${\bf X}\in V(\mathbb{R}^s)$ amounts then to the unique family of functions
$\vec{\bf b}=\vec{\bf b}(t)=(\vec{b}_1(t),\dots,\vec{b}_r(t))$ such that
$$
(X_\pi^{b})_t=\sum_{j=1}^rb^\pi_j(t)Y_j\otimes e^\pi\,,\qquad \vec{b}_j(t)=(b_j^1(t),\ldots,b_j^s(t)),
$$
and we denote ${\bf X}$ by ${\bf X}^{\bf \vec b}$. The vector sign on ${\bf \vec b}$ will be dropped when describing time-dependent vector fields.

The $t$-dependent system of PDEs related to an $\bX\in V(\R^s)$ is
not a Lie system in general, if $V$ is not a Lie algebra itself. Meanwhile, the Lie--Scheffers Theorem for PDEs (see \cite{CGM07,OG00}) can be reformulated
as follows.

\begin{theorem} {\bf (PDE Lie--Scheffers Theorem)} A t-dependent system of PDEs ${\bf X}$ on $N$ is a PDE Lie system if and only
if there exists a finite-dimensional Lie algebra of vector fields $V_0$ on $N$ such that ${\bf X}\in V_0(\mathbb{R}^s)$.
\end{theorem}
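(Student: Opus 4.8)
The plan is to recognise that this statement is nothing but a reformulation, in the language of the family $V_0(\mathbb{R}^s)$, of the PDE Lie--Scheffers Theorem already recalled in Section 4, so I would reduce it to that result rather than reprove anything from scratch. The entire argument amounts to matching the two descriptions of ${\bf X}$: on the one hand, ``each $(X_\pi)_t$ takes values in a finite-dimensional Lie algebra $V_0$ and ${\bf X}$ satisfies the ZCC'' (membership in $V_0(\mathbb{R}^s)$), and on the other hand, the Lie decomposition ${\bf X}=\sum_{\pi=1}^s\sum_{l=1}^r b_{\pi,l}(t)\,Y_l\otimes e^\pi$ appearing in the Section 4 theorem.

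For the implication ``${\bf X}\in V_0(\mathbb{R}^s)\Rightarrow$ PDE Lie system'', I would first fix a basis $\{Y_1,\dots,Y_r\}$ of the finite-dimensional Lie algebra $V_0$. Since each $(X_\pi)_t$ takes values in $V_0$, there are $t$-dependent functions $b_{\pi,l}(t)$ with ${\bf X}=\sum_{\pi=1}^s\sum_{l=1}^r b_{\pi,l}(t)\,Y_l\otimes e^\pi$, and ${\bf X}$ satisfies the ZCC, both by the very definition of $V_0(\mathbb{R}^s)$. This is exactly the right-hand side of the PDE Lie--Scheffers Theorem of Section 4, whose Vessiot--Guldberg Lie algebra one takes to be $V_0$ itself; that theorem then produces a superposition rule, so ${\bf X}$ is a PDE Lie system.

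For the converse I would start from a PDE Lie system ${\bf X}$ and apply the same Section 4 theorem in the opposite direction: it yields vector fields $Y_1,\dots,Y_r$ spanning an $r$-dimensional real Lie algebra $V_0$ and $t$-dependent functions so that ${\bf X}=\sum_\pi\sum_l b_{\pi,l}(t)\,Y_l\otimes e^\pi$. Reading this componentwise shows that each $(X_\pi)_t=\sum_l b_{\pi,l}(t)\,Y_l$ takes values in $V_0$, while ${\bf X}$ satisfies the ZCC because every PDE Lie system is integrable by definition. Hence ${\bf X}\in V_0(\mathbb{R}^s)$ for the finite-dimensional Lie algebra $V_0$, which is what we want.

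The one point that needs care—and the reason I would cite the single-system PDE Lie--Scheffers Theorem of Section 4 rather than simply specialise the Generalised PDE Lie--Scheffers Theorem (Theorem \ref{MT}) to a one-element family $\Lambda$—is that $V_0$ must be a genuine finite-dimensional Lie algebra with \emph{constant} structure constants. Theorem \ref{MT} applied to a single system only guarantees generators $\bar X^{[\pi_j]}_j$ closing under the bracket with $t$-\emph{dependent} structure functions $f_{jk}^l(t)$, which in general describes a Lie family rather than a single PDE Lie system governed by one fixed Lie algebra. For a single system this extra rigidity is precisely the content of the classical PDE Lie--Scheffers Theorem, so routing the proof through that theorem avoids the residual (and genuinely nontrivial) task of replacing the $t$-dependent structure functions by constant ones.
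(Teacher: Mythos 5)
Your proof is correct and takes essentially the same route as the paper: the paper presents this theorem explicitly as a \emph{reformulation} of the PDE Lie--Scheffers Theorem of Section 4 (cited from the literature), and your argument just spells out the dictionary between membership in $V_0(\mathbb{R}^s)$ (ZCC plus each $(X_\pi)_t$ valued in $V_0$) and the decomposition ${\bf X}=\sum_{\pi=1}^s\sum_{l=1}^r b_{\pi,l}(t)\,Y_l\otimes e^\pi$ over a basis of a finite-dimensional Lie algebra. Your closing caution---that specialising Theorem \ref{MT} to a one-element family would only give $t$-dependent structure functions and a $t$-dependent superposition rule, hence cannot replace the classical theorem here---is accurate and consistent with the paper's logic.
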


Let us use the elements of $W$ to generate controls mapping the elements of $V(\mathbb{R}^s)$ into PDE Lie systems or any other simpler system of PDEs to be studied.

\begin{definition} {\rm The {\it group of the quasi-Lie scheme} $(W,V)$ is the group, $\mathcal{G}^s(W)$, spanned by generalised $t$-flows with arbitrary foot points corresponding to the integrable $t$-dependent polyvector fields taking values in $W$}.
\end{definition}

Let us describe our definition of the group of a quasi-Lie scheme more carefully since it has one difference with previous works, where only generalised $t$-flows with foot point $t_0=0$ are employed \cite{CGL08}. This subtle difference allows for the more accurate description of integrability conditions through quasi-Lie schemes.

The Lie algebra $W$ can be integrated to a Lie group action $\varphi:G\times N\rightarrow N$ whose fundamental vector fields are exactly the elements of $W$. Then, the generalised $t$-flows with an arbitrary foot point $t_0\in \mathbb{R}^s$ for the elements of $W(\mathbb{R}^s)$ are of the form $\varphi(\sigma(t),x)$, where $\sigma(t_0)=e$. Then, the elements of $\mathcal{G}^s(W)$ are $t$-dependent changes of variables of the form $\varphi(\sigma(t),x)$, where $\sigma(t)$ is a product of mappings from $\mathbb{R}^s$ to $G$ taking the value $e$. Observe that every mapping $\sigma:\mathbb{R}^s\rightarrow G$ can be written as a product of two mappings taking the value $e$. In fact, there always exists a mapping $\sigma_1(t)$  such that $\sigma_1(0)=e$ and $\sigma_1(t_1)=\sigma(t_1)$. Then $\sigma_2(t)=\sigma(t)\sigma_1^{-1}(t)$ takes the value $e$ at $t_1$ and $\sigma(t)=\sigma_1(t)\sigma_2(t)$ is the product of two mappings $\sigma_1(t), \sigma_2(t)$ taking the value $e$. Hence, the elements of $\mathcal{G}^s(W)$ are $t$-dependent changes of variables of the form $\varphi(\sigma(t),x)$ for an arbitrary $\sigma:\mathbb{R}^s\rightarrow G$. This amounts to the previously defined extended $t$-flows.

The usefulness of the group of a quasi-Lie scheme is based upon the proposition below.

\begin{proposition}\label{Main} {\bf (Main property of a quasi-Lie scheme)}
{\rm Given a quasi-Lie scheme $(W,V)$, a $t$-dependent polyvector field $\bX\in V(\mathbb{R}^s)$, and an extended $t$-flow $g\in
\mathcal{G}^s(W)$, we get that $g_\di\bX\in V(\mathbb{R}^s)$.}
\end{proposition}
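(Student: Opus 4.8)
The plan is to evaluate $g_\di\bX$ through the explicit action formula \eqref{e6} and to verify the two requirements defining membership in $V(\mathbb{R}^s)$: that every component of $g_\di\bX$ takes values in $V$, and that $g_\di\bX$ satisfies the ZCC. By \eqref{e6},
$$(g_\di\bX)_t=\sum_{\pi=1}^s\left(\pd{g_t}{t_\pi}\circ g_t^{-1}+g_{t*}(X_\pi)_t\right)\otimes e^\pi,$$
so it is enough to show that for each $t$ and $\pi$ both summands of $\partial g_t/\partial t_\pi\circ g_t^{-1}+g_{t*}(X_\pi)_t$ lie in $V$.

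For the first summand I would use that $g\in\mathcal{G}^s(W)$ is of the form $g_t=\varphi(\sigma(t),\cdot)$ for a curve $\sigma:\mathbb{R}^s\to G$ into the connected Lie group $G$ integrating $W$, whose existence is guaranteed by $[W,W]\subset W$. Differentiating the action shows that $\partial g_t/\partial t_\pi\circ g_t^{-1}$ is precisely the fundamental vector field of $\varphi$ associated with the logarithmic derivative $(\partial\sigma/\partial t_\pi)\,\sigma(t)^{-1}\in\mathfrak{g}$; hence it belongs to $W$, and by $W\subset V$ it belongs to $V$. Equivalently, this term is the $\pi$-component of the $t$-dependent polyvector field in $W(\mathbb{R}^s)$ associated with $g$ through \eqref{e4}.

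The second summand $g_{t*}(X_\pi)_t$ is where the real content lies, and I expect it to be the main obstacle. Here $(X_\pi)_t\in V$, and $g_t$, being $\varphi(\sigma(t),\cdot)$ with $\sigma(t)$ a finite product of exponentials of $\mathfrak{g}$, is a finite composition of time-one flows of vector fields in $W$. For a single $Y\in W$ with flow $\Phi_s^Y$, the pushforward on vector fields is given by the convergent Lie series $(\Phi_s^Y)_*=\exp(-s\,\mathrm{ad}_Y)$, a power series in $\mathrm{ad}_Y$; since $[W,V]\subset V$, the operator $\mathrm{ad}_Y$ restricts to a linear endomorphism of the finite-dimensional space $V$, so $\exp(-s\,\mathrm{ad}_Y)$ maps $V$ into $V$. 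Composing finitely many such pushforwards gives $g_{t*}(X_\pi)_t\in V$. Thus both summands, and hence $(g_\di\bX)_t$, take values in $V$ for every $t$; note that this is exactly the step using all three defining relations of a quasi-Lie scheme.

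It remains to check the ZCC. Since $\bX$ is integrable by hypothesis, Theorem \ref{t3} gives $\overline{g_\di\bX}=\bar g_*(\bar\bX)$, and pushforward by the diffeomorphism $\bar g$ preserves Lie brackets; consequently the vanishing commutators $[\bar X^{[\pi]},\bar X^{[\nu]}]=0$ expressing integrability of $\bX$ are carried to the analogous vanishing commutators for $\overline{g_\di\bX}$, so $g_\di\bX$ is integrable. This is also immediate from Theorem \ref{t2}, which asserts that the $t$-flow action is well defined on integrable $t$-dependent polyvector fields. Combining the two parts yields $g_\di\bX\in V(\mathbb{R}^s)$, as claimed.
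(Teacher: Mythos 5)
Your proof is correct, but it is genuinely more self-contained than the paper's. The paper settles Proposition \ref{Main} in a few lines: it decomposes ${\bf X}=(X_1,\ldots,X_s)$ into components and invokes \cite[Proposition 1]{CGL08} to conclude that the action of $g$ on each $X_\pi$ stays in $V$, so the analytic content is entirely outsourced to that ODE-case result, and preservation of the ZCC is left implicit (it follows from Theorem \ref{t2} and its stated extension to extended $t$-flows). You instead reprove the ODE-level fact from scratch: the drift term $\partial g_t/\partial t_\pi\circ g_t^{-1}$ is a fundamental vector field of the action integrating $W$, hence lies in $W\subset V$, while the pushforward term is controlled by $(\Phi^Y_s)_*=\exp(-s\,\mathrm{ad}_Y)$ on $V$, legitimate precisely because $[W,V]\subset V$ makes $\mathrm{ad}_Y$ a linear endomorphism of the finite-dimensional space $V$; this is in essence the proof of the cited result, so your route buys independence from \cite{CGL08} at the cost of length, and your explicit ZCC verification via Theorems \ref{t2} and \ref{t3} fills a step the paper passes over in silence. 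Two details worth tightening: the identity $(\Phi^Y_s)_*Z=\exp(-s\,\mathrm{ad}_Y)Z$ for $Z\in V$ merits a one-line justification --- both sides solve the linear equation $\dot W(s)=-[Y,W(s)]$ with $W(0)=Z$, and the right-hand side remains in $V$, so they agree by uniqueness --- since for general vector fields the Lie series is only formal; and the existence of the global action $\varphi:G\times N\rightarrow N$ (hence the completeness of the flows you compose) is an assumption the paper makes when introducing $\mathcal{G}^s(W)$, not a consequence of $[W,W]\subset W$ alone.
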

The proof of Proposition \ref{Main} is an immediate generalisation of \cite[Proposition 1]{CGL08} and follows from the fact that if $g$ is a composition of generalised $t$-flows induced by  $t$-dependent polyvector fields of $W(\mathbb{R}^s)$. One has to consider the action of $g$ on the coordinates of ${\bf X}=(X_1,\ldots, X_s)$. The action of $g$ on each $X_i$ belongs to $V$ as a consequence of \cite[Proposition 1]{CGL08}. Hence, the $t$-dependent polyvector field $g_\di \bX$ takes values in $V$.% as well, as
%$[W,V]\subset V$ and $V$ is finite-dimensional.

The coefficients of ${\bf Y}\in W(\mathbb{R}^s)$ in the basis $\{Y_1,\ldots, Y_r\}$ permit us to parametrise its extended $t$-flows in the form $g^{{\bf \sigma}}=\underline{g}({\bf \sigma}(t),g_0)$, where $\sigma(t)$ is a mapping from $\mathbb{R}^s$ to the a Lie group $G$ obtained by integrating $W$. Let us assume additionally that $V_0$ is a Lie algebra of vector fields contained in the linear space $V$. We can then look for $\sigma(t)$ such that for a certain $\vec {\bf b}(t)$ we get that
\begin{equation}\label{Inclusion}
g^{\bf \sigma}_\di \bX^{\bf \vec b}\in V_0(\mathbb{R}^s).
\end{equation}
This choice of control functions makes $(\bX^{\bf \vec b},g^{\bf \sigma})$ into a quasi-Lie system, which allows us to apply Theorem \ref{t4} to construct a $t$-dependent superposition rule for $\bX^{\bf \vec b}$.

Let us observe that the inclusion (\ref{Inclusion}) becomes a
differential equation for the ${\bf \sigma}(t)$ in terms of the functions $\vec{\bf b}(t)$. This situation is much more frequent in the literature as it may seem to be at first sight (cf. \cite{CGL08,CGL09,CLL08Emd,CL08Diss,CL11}).

}
\end{definition}

% \begin{note}{\rm There is the largest Lie subalgebra we can use as $W$ -- the normalizer of $V$ in $V$. Sometimes, however, it is useful to consider smaller Lie subalgebras $W$.}
% \end{note}

 If $g_\di{\bf X}$ takes values in a finite-dimensional Lie algebra $V_0\subset V$, one obtains interesting results (see \cite{CGL08}) motivating the following definition.
\begin{definition} {A {\it quasi-Lie system with respect to quasi-Lie scheme $(W,V)$} is a a $t$-dependent polyvector field ${\bf X}\in V(\mathbb{R}^s)$ for which exists an extended $t$-flow $g\in
\mathcal{G}^s(W)$ and a Lie algebra of vector fields $V_0\subset V$ such that
$
g_\di\bX\in V_0(\R^s).
$}
\end{definition}
It is worth noting that if ${\bf X}$ is a quasi-Lie system with respect to the quasi-Lie scheme $(W,V)$, then it automatically admits a
$t$-dependent superposition rule in the form given by (\ref{eq9}). In the following, we will apply our theory
and illustrate these concepts with examples.

From now on, given a quasi-Lie scheme $(W,V)$, a  $g\in\mathcal{G}^s(W)$, and a Lie algebra of
vector fields $V_0\subset V$, we denote by $(W,V;V_0)_g$ the set of quasi-Lie systems relative to
$(W,V)$ such that $g_\di\bX\in V_0(\mathbb{R}^s)$.

\begin{corollary}\label{QL}
The family of quasi-Lie systems $(W,V;V_0)_g$ on $N$ is a PDE Lie family admitting the common $t$-dependent
superposition rule of the form
\begin{equation}\label{sf}
\bar{\Phi}_g(t,x_{(1)},\dots,x_{(m)},\lambda )=g_t^{-1}\circ\Phi\left(g_t(x_{(1)}),\dots,g_t(x_{(m)}),\lambda \right)\,,
\end{equation}
for any $t$-independent superposition rule $\Phi$ of a PDE Lie system with a Vessiot--Guldberg Lie algebra $V_0$.
\end{corollary}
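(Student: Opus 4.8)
The plan is to obtain the result as a direct synthesis of three earlier facts: Corollary \ref{CorFin} (existence of a single $t$-independent superposition rule for any family taking values in a fixed Vessiot--Guldberg Lie algebra), the solution-transport property of Theorem \ref{t2}, and the single-system superposition formula of Theorem \ref{t4}. The decisive observation is that both the control $g$ and the target Lie algebra $V_0$ are held fixed across the entire family $(W,V;V_0)_g$, which is exactly what upgrades a member-by-member superposition rule into a genuinely \emph{common} one.

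First I would unwind the definition of $(W,V;V_0)_g$: every $\bX$ in this set satisfies $g_\di\bX\in V_0(\R^s)$, so the transformed systems $\{g_\di\bX\}$ form a family of PDE Lie systems all taking values in the \emph{single} Vessiot--Guldberg Lie algebra $V_0$. This structural point is the crux, since it places all the images under $g_\di$ inside one fixed $V_0(\R^s)$. Corollary \ref{CorFin} then applies to this family and furnishes a $t$-independent common superposition rule $\Phi\colon N^m\times N\to N$ depending on $m$ particular solutions, with $m$ chosen so that $\dim V_0\le mn$; the same $\Phi$ and the same $m$ serve every member of the family simultaneously.

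Next I would transport $\Phi$ back through $g$, repeating the argument of Theorem \ref{t4} but now with this uniform $\Phi$. Fix any $\bX\in(W,V;V_0)_g$, a generic family $x_{(1)}(t),\dots,x_{(m)}(t)$ of particular solutions of $\bX$, and an arbitrary solution $x_{(0)}(t)$. By Theorem \ref{t2}, the curves $g_t(x_{(j)}(t))$ for $j=0,1,\dots,m$ are particular solutions of $g_\di\bX$; since $\Phi$ is a common superposition rule for $g_\di\bX$, there is a unique $\lambda\in N$ with $g_t(x_{(0)}(t))=\Phi(g_t(x_{(1)}(t)),\dots,g_t(x_{(m)}(t));\lambda)$. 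Applying $g_t^{-1}$ yields $x_{(0)}(t)=\bar\Phi_g(t,x_{(1)}(t),\dots,x_{(m)}(t);\lambda)$, which is precisely formula (\ref{sf}).

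The only genuine obstacle to highlight is the \emph{uniformity} of the construction, and the proof should make explicit why it is available here rather than just the per-system output of Theorem \ref{t4}. Because $g$ is common to all members by the definition of $(W,V;V_0)_g$, and because the images $g_\di\bX$ all lie in the fixed $V_0(\R^s)$, Corollary \ref{CorFin} produces one common $\Phi$ for all of them at once; consequently the single map $\bar\Phi_g$ expresses the general solution of \emph{every} $\bX\in(W,V;V_0)_g$ in terms of generic particular solutions. Hence $\bar\Phi_g$ is a common $t$-dependent superposition rule and $(W,V;V_0)_g$ is a PDE Lie family, with the formula valid for any common $t$-independent superposition rule $\Phi$ of the $V_0$-system.
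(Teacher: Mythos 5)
Your proposal is correct and follows essentially the same route as the paper: the paper's proof likewise invokes Corollary \ref{CorFin} to obtain a single $t$-independent superposition rule $\Phi$ common to all the transformed systems $g_\di\bX$ taking values in $V_0$, and then applies Theorem \ref{t4} to conjugate it by the fixed control $g$. Your only addition is to spell out the transport argument of Theorem \ref{t4} inline (via Theorem \ref{t2}) and to emphasise explicitly the uniformity of $g$ and $\Phi$ across the family, which the paper leaves implicit.
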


\begin{proof} As a consequence of Corollary \ref{CorFin}, every family of PDE Lie systems admitting a common Vessiot--Guldberg Lie algebra, $V_0$, admits a common superposition rule $\Phi$. The proof of the present corollary then follows from applying Theorem \ref{t4}.
\end{proof}

In view of Corollary \ref{QL}, every quasi-Lie system and, consequently, every PDE Lie system can be included
in a PDE Lie family satisfying Theorem \ref{MT}. This fact  justifies once more calling this theorem generalised Lie--Scheffers
Theorem.

{\begin{example} Let us consider the vector space  of vector fields on $\mathbb{R}$  given by $V_{\rm Ab}=\langle u^3\partial_u,u^2\partial_u,u\partial_u,\partial_u\rangle $
and the Lie algebra $W_{\rm Ab}=\langle u\partial_u,\partial_u\rangle\subset V$. It is immediate that $(W_{\rm Ab},V_{\rm Ab})$ becomes a quasi-Lie scheme. The $t$-dependent polyvector fields taking value in the space $V_{\rm Ab}(\mathbb{R}^2)$ correspond to the systems of Abel PDEs
\begin{equation}\label{PRE*}
\left\{
\begin{aligned}
\frac{\partial u}{\partial t_1}&=A(t_1,t_2)u^3+B(t_1,t_2)u^2+C(t_1,t_2)u+D(t_1,t_2)\,,\\
\frac{\partial u}{\partial t_2}&=E(t_1,t_2)u^3+F(t_1,t_2)u^2+G(t_1,t_2)u+H(t_1,t_2)\,,
\end{aligned}\right.
\end{equation}
where $A(t_1,t_2),B(t_1,t_2),C(t_1,t_2),D(t_1,t_2),E(t_1,t_2),F(t_1,t_2)$  satisfy the ZCC condition, namely
$$
AF-EB=0,\quad \partial_{t_2}A-\partial_{t_1}E+2(AG-CE)=0,\quad \partial_{t_2}C-\partial_{t_1}G+2(BH-DF)=0,
$$
$$
\partial_{t_2}B-\partial_{t_1}F+3(AH-DE)+BG-CF=0,\qquad \partial_{t_2}D-\partial_{t_1}H+CH-DG=0.
$$
The group $\mathcal{G}^2(W_{\rm Ab})$ can be described by curves in the group $\mathfrak{Aff}(\mathbb{R})\simeq \mathbb{R}_*\ltimes \mathbb{R}$ of affine transformations on $\mathbb{R}$. Then, $\mathcal{G}^2(W_{\rm Ab})$ consists of generalised $t$-flows of the form
$$g(t_1,t_2)(x)=a(t_1,t_2)u+b(t_1,t_2),$$
where $a(t_1,t_2)$ is a positive function and $b(t_1,t_2)$ is arbitrary. These non-autonomous changes of variables act on the equations (\ref{PRE*}). Let $V_0=\langle u^3\partial_u,u\partial_u\rangle$. The space $V_0(\mathbb{R}^2)$ contains the $t$-dependent polyvector fields corresponding to the systems of PDEs
$$
% \begin{equation}\label{PRE2}
 \left\{
 \begin{aligned}
 \frac{\partial u}{\partial t_1}&=f(t_1,t_2)u^3+l(t_1,t_2)u\,,\\
 \frac{\partial u}{\partial t_2}&=h(t_1,t_2)u^3+m(t_1,t_2)u\,,
 \end{aligned}\right.
% \end{equation}
$$
for functions $f(t_1,t_2),h(t_1,t_2),l(t_1,t_2),m(t_1,t_2)$ satisfying the ZCC condition.
If the $t$-dependent polyvector field $\bX$ corresponds to (\ref{PRE*}) and $g\in\mathcal{G}^s(W_{\rm Ab})$, then $g_\di X$ takes the form
{\footnotesize
\begin{equation*}\label{PRE2*}
\left\{
\begin{aligned}
\frac{\partial g}{\partial t_1}&=\frac{A}{a^2}g^3+\frac{aB-3Ab}{a^2}g^2+\frac{3Ab^2/a-2bB+Ca+\partial_{t_1}a}{a}g+\frac{b(bB-Ab^2/a-Ca-\partial_{t_1}a)+a^2D+a\partial_1b}{a}\,,\\
\frac{\partial g}{\partial t_2}&=\frac{E}{a^2}g^3+\frac{aF-3Eb}{a^2}g^2+\frac{3Eb^2/a-2bF+Ga+\partial_{t_2}a}{a}g+\frac{b(bF-Eb^2/a-Ga-\partial_{t_1}a)+a^2H+a\partial_1b}{a}\,.
\end{aligned}\right.
\end{equation*}
}
Recalling the ZCC condition for (\ref{PRE*}), we get that $g_\di \bX\in V_0(\R^2)$ if and only if $aB-3Ab=0$ and, the conditions
\begin{equation}\label{conAb}
\begin{gathered}
27DE^2-9FEC+2BF^2-9F\partial_1E+9E\partial_1F=0,\\
2HDE^2-9FEG+2F^3-9F\partial_2E+9E\partial_2F=0.
\end{gathered}
\end{equation}
are satisfied. Then, the initial PDE Abel differential equation can be mapped to a PDE Bernoulli equation $g_\di \bX$, which
% \beas B+3Ab&=&0\,,\quad C-3Ab^2=0\,,\quad D=b_{t_1},\\
% F+3Eb&=&0\,,\quad G-3Eb^2=0\,,\quad H=b_{t_2}\,.
% \eeas
admits a $t$-dependent superposition rule constructed from Theorem \ref{t4} of the form
$$
{u=\Phi(u_{(1)},u_{(2)},\lambda )=[\lambda u^{-1/2}_{(1)}+(1-k)u^{-1/2}_{(2)}]^{-2}},\quad \lambda\in \mathbb{R}.
$$
By inverting the $t$-dependent change of variables, one gets a $t$-dependent superposition rule for ${\bf X}$. All partial Abel differential equations (\ref{PRE*}) satisfying the condition (\ref{conAb}) form a PDE Lie family.
\end{example}

\section{Quasi-Lie invariants and integrability}

Let us restudy geometrically the generalised Chiellini conditions appearing in Example \ref{e1a} to show an interesting fact that was overseen in \cite{HLM13}. This will lead to develop techniques for determining the control of a quasi-Lie system by using a certain type of invariants related to quasi-Lie schemes.

The form of a generalised Abel differential equation (\ref{InAbe}), for a fixed $\epsilon\in \mathbb{R}$, can be characterised by its $t$-dependent coefficients ${\bf b}={\bf b}(t)=(a(t),c(t),f(t),g(t))$. This can be restated geometrically by saying that a generalised Abel differential equation (\ref{InAbe}) amounts to a unique $t$-dependent vector field $X$ taking values in the vector space $V_{\rm GA}$ spanned by the basis of vector fields
\begin{equation}\label{Bas}
X_1=\partial_x,\qquad X_2=x\partial_x,\qquad X_3=x^{\epsilon-1}\partial_x,\qquad X_4=x^\epsilon\partial_x
\end{equation}
and $X=a(t)X_1+c(t)X_2+f(t)X_3+g(t)X_4$ for a unique set ${\bf b}$ of $t$-dependent coefficients. We will hereafter write $X^{\bf b}$ for the $t$-dependent vector field $X^{\bf b}=a(t)X_1+c(t)X_2+f(t)X_3+g(t)X_4$ associated with a certain ${\bf b}$.

As $X^{\bf b}$ can be considered as a curve ${\bf b}(t)$ in $V_{\rm GA}$, the jets ${\bf j}^1_tX$ can be understood as a curve ${\bf j}^1X^{\bf b}(t)$ in $T^1V_{\rm GA}$.  If $\{x_1,x_2,x_3,x_4\}$ are the coordinates on $V_{\rm GA}$ corresponding to the basis (\ref{Bas}), then $\{x_1,x_2,x_3,x_4,\dot x_1,\dot x_2,\dot x_3,\dot x_4\}$ stands for the induced coordinate system in $T^1V_{\rm GA}$ and a general point of $T^1V_{\rm GA}$ can be represented by $(x_1,x_2,x_3,x_4,\dot x_1,\dot x_2,\dot x_3,\dot x_4)$.
 Hence, in coordinates,
$$
{\bf j}^1X^{\bf b}(t)=(a(t),c(t),f(t),g(t),\dot a(t),\dot c(t),\dot f(t),\dot g(t)).
$$

Consider the $t$-dependent vector fields  ${X}^{\bf b}$ and ${X}^{\bf  b'}$ associated with the initial (\ref{InAbe}) and the simplified (\ref{OutAbe}) generalised Abel differential equations. Recall that $X^{\bf b'}$ was chosen in such a way that we can obtain its general solution. %We write
%$$
%\begin{gathered}
%{\bf b}_1={\bf b}_1(t)=(a(t),b(t),f(t),g(t),\dot a(t),\dot b(t),\dot f(t),\dot g(t)),\\
%{\bf b}_2={\bf b}_2(t)=(a(t),b(t),f(t),g(t),\dot a(t),\dot b(t),\dot f(t),\dot g(t)),
%\end{gathered}
%$$

We now propose that the generalised Chiellini conditions (\ref{GCC}) must be understood as two functions $F_i:T^1V_{\rm GA}\rightarrow \mathbb{R}$, with $i=1,2$, namely
\begin{equation}\label{AbelInv}
F_1=-\frac{x_4^{\epsilon-3}}{x_3^\epsilon}(x_4\dot x_3-(x_2x_4+\dot x_4)x_3),\qquad F_2=\frac{x_4^{\epsilon-1}x_1}{x_3^\epsilon},
\end{equation}
such that $F_1({\bf j}_t^1X^{\bf b})=k_1$ and $F_2({\bf j}_t^1X^{\bf b'})=k_2$ for every $t\in \mathbb{R}$. In other words, $F_1$ and $F_2$ take a constant value on ${\bf j}_t^1X^{\bf b}$. To understand a little bit more why the generalised Chiellini conditions allow us to integrate $X^{\bf b}$, one may notice that if $X^{\bf b}$ can be connected with $X^{\bf b'}$ through a $t$-dependent transformation $\bar x=\beta(t)x$, where $\beta(t)$ is any positive function, then $X^{\bf b'}$ is related to the generalised Abel differential equation
\begin{equation}\label{Ex1}
\frac{d\bar x}{dt}=a(t)\beta(t)+\left(\frac{\dot\beta(t)}{\beta(t)}+b(t)\right)\bar x+\frac{f^{\epsilon-2}(t)}{\beta(t)}\bar x^{\epsilon-1}+\frac{g(t)}{\beta^{\epsilon-1}(t)}\bar x^\epsilon,
\end{equation}
and, omitting the dependence on $t$ of the coefficients for simplicity,
\begin{equation}\label{Ex2}
\!\!{\bf j}^1\!X^{\bf b'}\!\!=\!\!\left(a\beta,\frac{\dot\beta}{\beta}+b,\frac{f^{\epsilon-2}}{\beta},\frac{g}{\beta^{\epsilon-1}},\dot a\beta\!+\!a\dot\beta,\frac{\ddot\beta}{\beta}\!-\!\frac{\dot\beta^2}{\beta^2}+\dot b,\frac{\dot f\beta\!+\!(2\!-\!\epsilon)f\dot\beta}{\beta^{\epsilon-1}},\frac{\dot g \beta+(1-\epsilon)g\dot \beta}{\beta^{\epsilon}}\right).
\end{equation}
Hence, $F_i({\bf j}_t^1X^{\bf b'})=F_i({\bf j}_t^1X^{\bf b})$ for every $t\in \mathbb{R}$. Since we aim to map $X^{ \bf b}$ onto $X^{\bf  b'}$ through such a $t$-dependent transformation and $F_1,F_2$ are constant on ${\bf j}^1X^{ \bf b'}(t)$, because it takes values in a one-dimensional space of $V_{\rm GA}$, then $F_1,F_2$ must be constant on the curve ${\bf j}^1X^{\bf b}(t)$.
%Recalling that the above $t$-dependent change of variables are indeed the $t$-flows of $\mathcal{G}^s(W_{\rm GA})$, with $W_{\rm GA}=\langle x\partial_x\rangle$, it above tells us that $F_1$ and $F_2$ are somehow invariant relative to the action on $V_{\rm GA}(\mathbb{R})$ of $\mathcal{G}^s(W_{\rm GA})$.

Let us provide a theoretical framework for the above example that will be extensible to the integration of PDEs through quasi-Lie schemes extending and simplifying the approach given in \cite{CL15}.
\begin{definition}\label{Def:QLI}
A {\it quasi-Lie invariant} of first order relative to a quasi-Lie scheme $(W,V)$ on a manifold $N$ is a function $F:J^1_0(\mathbb{R}^s,V\otimes \mathbb{R}^s)\rightarrow \mathbb{R}$, where $J^1_0(\mathbb{R}^s,V\otimes \mathbb{R}^s)$ is {the} space of 1-jets at 0 of maps from $\mathbb{R}^s$ to $V \otimes \mathbb{R}^s$, such that
\begin{align*}
F({\bf j}^1_tg_\di {\bf X})=F({\bf j}^1_t{\bf X}),\qquad \forall g\in \mathcal{G}^s(W),\qquad \forall {\bf X}\in V(\mathbb{R}^s),\qquad \forall t\in\mathbb{R}.
\end{align*}
\end{definition}

Although we focus on the case of quasi-Lie invariants of first-order, which is enough to analyse the examples treated in this section, the theory for quasi-Lie invariants of higher order follows straightforwardly from our results.

It is worth noting that the work \cite{CL15} presents a different definition of quasi-Lie invariants for ODEs as a certain function $F:V(\mathbb{R})\rightarrow C^\infty(\mathbb{R})$ such that $F(g_\di X)=F(X)$ for every $g\in \mathcal{G}^1(W)$ and $X\in V(\mathbb{R})$. Nevertheless, all examples treated in \cite{CL15} through the latter definition of quasi-Lie invariants are, at the end, reduced to study our Definition \ref{Def:QLI} (cf. \cite[Sec. 10, 11, and 12]{CL15}), which is purely geometrical and it is simpler in the sense that it does not rely on the use of infinite-dimensional spaces like $V(\mathbb{R})$.

% The projections $\pi^{l}_k:J^{l}_0(\mathbb{R}^s,N)\rightarrow J^{k}_0(\mathbb{R}^s,N)$, for $l>k$ give rise to natural embeddings $C^\infty(J^k(\mathbb{R}^s,N))\rightarrow C^\infty(J^{l}(\mathbb{R}^s,N))$. Hence, every functions jets at $0\in \mathbb{R}^s$ of order $k$ can be naturally and canonically considered as a function on jets of higher-order at $0$. Moreover, every quasi-Lie invariant of order $k$ can be considered in a canonical way as an invariant of order $l$.

% \begin{note} Although we give the definition of a quasi-Lie invariant of $k$-order, we focus on quasi-Lie invariants of order zero and one, as this will be the ones appearing in the example treated in here.
% \end{note}
\begin{example} Let us define the quasi-Lie scheme $(W_{\rm GA},V_{\rm GA})$, where $W_{\rm GA}=\langle x\partial_x\rangle$. The integration of the vector field $x\partial_x$ gives rise to a Lie group action $\phi:(\lambda,x)\in \mathbb{R}_*\ltimes\mathbb{R}\mapsto \lambda x\in \mathbb{R}$. Meanwhile, $\mathcal{G}^1(W_{\rm GA})$ is given by the $t$-changes of variables of the quasi-Lie scheme, namely $\bar x(t)=\beta(t)x(t)$ for a positive function $\beta(t)$. The expression (\ref{Ex2}) stands for $g_\di X^{\bf b}$. In view of Definition \ref{Def:QLI} and the expression (\ref{Ex2}), one sees that the functions $F_1,F_2:J^1_0(\mathbb{R},V_{\rm AG})=T^1V_{\rm GA}\rightarrow \mathbb{R}$ are  quasi-Lie invariants of first-order relative to $(W_{\rm GA},V_{\rm GA})$.
\end{example}

To determine all quasi-Lie invariants of first order, we will use the fact that if $G$ is a Lie group with multiplication $'\cdot'$, then the space of $2$-order jets of maps from $\mathbb{R}^s$ to the Lie group $G$  at $0\in \mathbb{R}^s$, let us say $J^2_0(\mathbb{R}^s,G)$, is a differentiable manifold admitting a Lie group structure relative to the multiplication
$$
\begin{array}{rccc}
\star:&J_0^2(\mathbb{R}^s,G)\times J_0^2(\mathbb{R}^s,G)&\longrightarrow &J_0^2(\mathbb{R}^s,G)\\
&({\bf j}^2_0g_1, {\bf j}_0^2g_2)&\mapsto &{\bf j}_0^2(\hat g_1\cdot \hat g_2),
\end{array}
$$
where $\hat g_1,\hat g_2$ are maps from $\mathbb{R}^s$ to $G$ belonging to the equivalence classes of ${\bf j}^2_0g_1$ and ${\bf j}^2_0g_2$, respectively. This result is independent of the representatives $\hat g_1,\hat g_2$ since ${\bf j}^2_0(\hat g_1\cdot \hat g_2)$ depends only on the derivatives up to second order of $\hat g_1,\hat g_2$. The associativity of $\star$ follows from the previous commentary and the associativity of the multiplication in $G$.

\begin{example}\label{JetGroup} Consider the Lie group structure on $\mathbb{R}_*$ relative to its multiplicative group structure. Then,   %$J_0^1(\mathbb{R}^s,\mathbb{R}_*)$ has a Lie group structure with a multiplication
%$
%(\lambda ,\dot\lambda)\star (\mu,\dot \mu)=(\lambda\mu,\mu\dot\lambda +\lambda \dot\mu).
%$ The left-invariant vector fields on $J^1_0(\mathbb{R}^s,\mathbb{R}_*)$ are spanned by linear combinations of
%$
%X_1=\lambda\partial_\lambda+\dot\lambda\partial_{\dot\lambda},X_2=\lambda\partial_{\dot\lambda}.
%$
%Meanwhile,
the Lie group $J^2_0(\mathbb{R},\mathbb{R}_*)$ has a group multiplication
$
(\lambda ,\dot\lambda,\ddot \lambda)\star (\mu,\dot \mu,\ddot \mu)=(\lambda\mu ,\mu\dot\lambda +\lambda \dot\mu,\ddot \lambda\mu+2\dot\lambda\dot\mu+\lambda\ddot \mu)
$
and its left-invariant vector fields are linear combinations of the vector fields
$
X_1=\lambda\partial_\lambda+\dot\lambda\partial_{\dot\lambda}+\ddot \lambda\partial_{\ddot\lambda}, X_2=\lambda \partial_{\dot\lambda}+2\dot\lambda\partial_{\ddot\lambda}, X_3=\lambda\partial_{\ddot\lambda}.
$
\end{example}

The following proposition attaches a quasi-Lie scheme to a Lie group action whose invariants, in a sense given next, give rise to quasi-Lie invariants of first order. It is worth recalling that every ${\bf X}\in V(\mathbb{R}^s)$ can be written as $(X_1,\ldots,X_s)$, where $X_1,\ldots, X_s$ are functions from $\mathbb{R}^s$ to $V$. In consequence, every ${\bf X}\in V(\mathbb{R}^s)$ can be considered as a mapping from $\mathbb{R}^s$ to $V\otimes \mathbb{R}^s$. % Taking a basis $Y_1,\ldots,Y_r$ of $V$, every ${\bf X}$ amounts to a set of $t$-dependent functions $(\vec{x}_1(t),\ldots,\vec{x}_m(t))$.

\begin{proposition}\label{LGAP}
Let $(W,V)$ be a quasi-Lie scheme on $N$ and let $\varphi:G\times N\rightarrow N$ be the Lie group action induced by the integration of the Lie algebra of vector fields $W$. Then, there exists a Lie group action of the form
\begin{equation}\label{LGA}
\begin{array}{rccc}
\varphi^1&:J_0^{2}(\mathbb{R}^s,G)\times J_0^{1}(\mathbb{R}^s,V\otimes \mathbb{R}^s)&\longrightarrow &J_0^{1}(\mathbb{R}^s,V\otimes \mathbb{R}^s),\\
&({\bf j}^{2}_0g,{\bf j}^{1}_0{\bf X})&\mapsto &{\bf j}_0^{1}(\hat g_\di {\bf \hat X}),
\end{array}
\end{equation}
where the space $J^{1}_0(\mathbb{R}^s,V\otimes \mathbb{R}^s)$ is the bundle of $1$-jets at $0$ of mappings from $\mathbb{R}^s$ to $V\otimes \mathbb{R}^s$, and $\hat g, \hat X$ are representatives of ${\bf j}_0^{2}g$ and ${\bf j}_0^{1}{\bf X}$, respectively.
\end{proposition}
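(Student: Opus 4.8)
The plan is to verify two things: that $\varphi^1$ is well defined on jets (independent of the chosen representatives), and that it satisfies the axioms of a Lie group action. Both rest on the explicit formula (\ref{e6}) for the action $\hat g_\di\hat\bX$ of an extended $t$-flow on a $t$-dependent polyvector field, which I would simply apply to representatives $\hat g:\R^s\to G$ and $\hat\bX:\R^s\to V\otimes\R^s$, noting that (\ref{e6}) is an algebraic-differential expression that makes sense for any such representative.

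First I would establish the crucial jet-order count. Writing $\hat g_t(x)=\varphi(\sigma(t),x)$ for the map $\sigma=\hat g:\R^s\to G$, formula (\ref{e6}) expresses $\hat g_\di\hat\bX$ as a sum of two kinds of terms: the inhomogeneous term $(\partial\hat g_t/\partial t_\pi)\circ\hat g_t^{-1}$ and the pushforward term $\hat g_{t*}(\hat X_\pi)_t$. The pushforward term depends pointwise on $\sigma(t)$ and on $(\hat X_\pi)_t$; hence its value at $t=0$ is determined by $\sigma(0)$ and by the value of $\hat\bX$ at $0$, while its first $t$-derivatives at $0$ introduce $\partial_{t_\nu}\sigma(0)$ and $\partial_{t_\nu}\hat X_\pi(0)$, i.e. the $1$-jets of $\sigma$ and of $\bX$. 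The inhomogeneous term already carries one derivative of $\sigma$, so its value at $0$ is fixed by $\mathbf{j}^1_0\sigma$, and differentiating it once more introduces the second derivatives $\partial^2_{t_\nu t_\pi}\sigma(0)$. Collecting both contributions, $\mathbf{j}^1_0(\hat g_\di\hat\bX)$ is determined by $\mathbf{j}^2_0\sigma=\mathbf{j}^2_0 g$ and by $\mathbf{j}^1_0\bX$, and by nothing else; this shows that the map factors through $J^2_0(\R^s,G)\times J^1_0(\R^s,V\otimes\R^s)$ and is independent of representatives. Moreover, the generator $(\partial\hat g_t/\partial t_\pi)\circ\hat g_t^{-1}$ of $g$ lies in $W\subset V$ by (\ref{e4}), and the pushforward by a $W$-flow preserves $V$ (the fibrewise content of Proposition \ref{Main}, following \cite[Proposition 1]{CGL08}), so the output takes values in $V\otimes\R^s$ and its $1$-jet indeed lies in $J^1_0(\R^s,V\otimes\R^s)$.

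Next I would check the action axioms. The identity of $J^2_0(\R^s,G)$ is the $2$-jet of the constant map $e_G$, for which $\hat g_t=\mathrm{id}_N$ and $\hat g_\di\bX=\bX$ by Theorem \ref{t2}, so $\varphi^1$ fixes every $1$-jet. For compatibility with the product $\star$, recall that $\mathbf{j}^2_0 g_1\star\mathbf{j}^2_0 g_2=\mathbf{j}^2_0(\hat g_1\cdot\hat g_2)$ with pointwise multiplication in $G$. Since $\varphi$ is a group action, $\varphi(\sigma_1(t)\sigma_2(t),x)=\varphi(\sigma_1(t),\varphi(\sigma_2(t),x))$, so the extended $t$-flow attached to $\hat g_1\cdot\hat g_2$ is exactly the composition $g_1\circ g_2$. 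Invoking the group-action identity (\ref{NA}), $(g_1\circ g_2)_\di\bX=(g_1)_\di((g_2)_\di\bX)$, and passing to $1$-jets through the well-definedness just proved yields
$$\varphi^1(\mathbf{j}^2_0 g_1\star\mathbf{j}^2_0 g_2,\mathbf{j}^1_0\bX)=\varphi^1(\mathbf{j}^2_0 g_1,\varphi^1(\mathbf{j}^2_0 g_2,\mathbf{j}^1_0\bX)).$$
Smoothness of $\varphi^1$ follows because every ingredient---the action $\varphi$, pointwise multiplication in $G$, and differentiation---is smooth; hence $\varphi^1$ is a Lie group action.

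The step I expect to be the main obstacle is the jet-order bookkeeping in the second paragraph. The asymmetry---requiring a $2$-jet of $g$ but only a $1$-jet of $\bX$---is precisely what pins down the domain $J^2_0(\R^s,G)\times J^1_0(\R^s,V\otimes\R^s)$, and it arises solely from the inhomogeneous term $(\partial\hat g_t/\partial t_\pi)\circ\hat g_t^{-1}$ in (\ref{e6}), which carries one derivative of $g$ before any further $t$-differentiation. Making this count rigorous, e.g. by writing (\ref{e6}) in local coordinates and tracking the chain rule through $\hat g_t^{-1}$, is routine but must be done carefully to confirm that no third-order data of $g$ and no second-order data of $\bX$ survive in the final $1$-jet.
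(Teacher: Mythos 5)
Your proposal is correct and follows essentially the same route as the paper's proof: both rest on the decomposition of $\hat g_\di\hat{\bf X}$ into the inhomogeneous (generator) term plus the pushforward term — you quote (\ref{e6}) directly where the paper rederives this split via particular solutions and the identity $\varphi(\sigma(t),x)=\varphi(\sigma(t)\sigma^{-1}(0),\varphi(\sigma(0),x))$ — followed by the same jet count ($2$-jet of $g$ forced by the derivative already present in the generator term, $1$-jet of ${\bf X}$ from the pushforward term), the same appeal to Proposition \ref{Main} for the range, and the same verification of the action axioms through $(g_1\circ g_2)_\di{\bf X}=(g_1)_\di((g_2)_\di{\bf X})$.
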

\begin{proof} Let us show that $\varphi^1$ takes values in $J^1_0(\mathbb{R}^s,V\otimes \mathbb{R}^s)$.
Proposition \ref{Main} ensures that $\hat g_\di {\bf \hat X}\in V(\mathbb{R}^s)$, i.e. $\hat g_\di {\bf \hat X}$ can be understood as map from $\mathbb{R}^s$ to $V\otimes \mathbb{R}^s$. In consequence, the $1$-order jet at $0\in \mathbb{R}^s$ of $\hat g_\di {\bf \hat X}$ can be understood as an element of $J^1_0(\mathbb{R}^s,V\otimes \mathbb{R}^s)$.

Let us prove now that ${\bf j}_0^{1}(\hat g_\di {\bf \hat X})$ is a function depending only on  ${\bf j}_0^{2}g$ and ${\bf j}_0^{1}{\bf X}$. If $x(t)$ is a particular solution to $\hat {\bf X}$ with initial condition $x(0)=\hat g^{-1}_0(x)$ and using that every $\hat g\in \mathcal{G}^s(W)$ can be written as $\hat g(t,x)=\varphi(\sigma(t),x)$ for a certain mapping $\sigma:\mathbb{R}^s\rightarrow G$, then $\hat g(t,x(t))$ is a particular solution of the $t$-dependent polyvector field $\hat g_\di \hat X$ and
$$
\begin{aligned}
(\hat g_\di {\bf \hat X})_0(x)&=\sum_{\pi=1}^s\frac{\partial}{\partial t_\pi}\bigg|_{t=0}\varphi(\sigma(t),x(t))\otimes e^\pi\\
&=\sum_{\pi=1}^s\frac{\partial}{\partial t_\pi}\bigg|_{t=0}\left[\varphi(\sigma(t)\sigma^{-1}(0),x)+\varphi(\sigma(0),x(t))\right]\otimes e^\pi\\
&={\bf Y}^{\hat{\sigma}}(x)+\sum_{\pi=1}^s\frac{\partial}{\partial t_\pi}\bigg|_{t=0}\left[\varphi(\sigma(0),x(t))\right]\otimes e^\pi,
\end{aligned}
$$
where ${\bf Y}^{\sigma}$ is the $t$-dependent polyvector field taking values in $W(\mathbb{R}^s)$ whose generalised $t$-flow is determined by $\hat \sigma(t)=\sigma(t)\sigma^{-1}(0)$. Therefore, ${\bf Y}^{\hat\sigma}$ is a function only of ${\bf j}^1_0g$. Meanwhile,  Proposition \ref{Main} ensures that $\partial_{t_i}[\varphi(g(0),x(t))]$ gives rise to an element of $V(\mathbb{R}^s)$ and its value at $t=0$ depends on ${\bf j}^0_0g$ and ${\bf j}^0_0{\bf X}$ (see also \cite{CGL08,CLL08Emd}). Deriving the expression for $\hat g_\di \hat X$ in terms of the $t_\pi$, with $\pi=1,\ldots,s$, we find that ${\bf j}^1_0\hat g_\di {\bf \hat X}$ depends only on ${\bf j}^{2}_0g$ and ${\bf j}^1_0{\bf X}$. In consequence,  $\varphi^1$ is a well-defined mapping.

Finally, let us prove that $\varphi^1$ is a Lie group action. In fact
\begin{multline*}
\varphi^1({\bf j}^{2}_0g_1,\varphi^1({\bf j}^{2}_0g_2,{\bf j}^{1}_0{\bf X}))=\varphi^1({\bf j}^{2}_0g_1,{\bf j}_0^{1}({\hat{g}_2}\,_\di \hat {\bf X}))={\bf j}^{1}_0({\hat{g}_1}\,_\di {\hat{g} _2}\,_\di\hat {\bf X})\\
={\bf j}^{1}_0((\hat{g}_1\cdot \hat{g}_2) _\di\hat {\bf X})=\varphi^1({\bf j}_0^{2}(\hat g_1\cdot \hat g_2),{\bf j}_0^{1}{\bf X})=\varphi^1({\bf j}_0^{2}g_1\star {\bf j}_0^{2}g_2,{\bf j}_0^{1}{\bf X})
\end{multline*}
and $\varphi^1({\bf j}_0^{2}e,{\bf j}_0^{1}{\bf X})={\bf j}_0^{1}{\bf X}$, where $e$ is understood as the mapping $e:t\in \mathbb{R}^s\mapsto e\in G$.
\end{proof}
The following is an immediate consequence of Proposition \ref{LGAP} and Definition \ref{Def:QLI}.

\begin{corollary}\label{QLI} Let $(W,V)$ be a quasi-Lie scheme and let $\varphi:G\times N\rightarrow N$ be the Lie action resulting from the integration of $W$. Every constant function $F:J^1_0(\mathbb{R}^s,V\otimes \mathbb{R}^s)\rightarrow \mathbb{R}$ along the orbits of the Lie group action $\varphi^1$ is a quasi-Lie invariant of order one relative to $(W,V)$.
\end{corollary}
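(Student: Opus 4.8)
The plan is to deduce the corollary directly from the description of $\varphi^1$ furnished by Proposition \ref{LGAP}, together with Definition \ref{Def:QLI}. The one genuine subtlety is that the domain $J^1_0(\mathbb{R}^s,V\otimes\mathbb{R}^s)$ of $F$ consists of jets based at $0$, whereas the invariance required by Definition \ref{Def:QLI} is stated for the jets ${\bf j}^1_t g_\di{\bf X}$ and ${\bf j}^1_t{\bf X}$ based at an arbitrary $t$; reconciling these two base points is the heart of the argument, and everything else is a direct reading of Proposition \ref{LGAP}.

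First I would fix $g\in\mathcal{G}^s(W)$, ${\bf X}\in V(\mathbb{R}^s)$ and $t\in\mathbb{R}^s$. By Proposition \ref{Main}, $g_\di{\bf X}$ again lies in $V(\mathbb{R}^s)$, so both ${\bf j}^1_t(g_\di{\bf X})$ and ${\bf j}^1_t{\bf X}$ are legitimate $1$-jets of maps $\mathbb{R}^s\to V\otimes\mathbb{R}^s$. Writing $\tau_t:s\in\mathbb{R}^s\mapsto s+t\in\mathbb{R}^s$ for the translation, the $1$-jet at $t$ of any such map $M$ is identified in the standard way with the $1$-jet at $0$ of $M\circ\tau_t$, since $(M\circ\tau_t)(0)=M(t)$ and $\partial(M\circ\tau_t)/\partial s_\pi|_0=\partial M/\partial t_\pi|_t$. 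This is precisely the identification implicit in evaluating $F$ on ${\bf j}^1_t M$.

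The key step is then to verify that translation commutes with the action $\di$. From the explicit formula (\ref{e6}) one reads off that $(g_\di{\bf X})_{s+t}$ depends only on $g_{s+t}$, on $\partial g_{s+t}/\partial t_\pi$ and on $(X_\pi)_{s+t}$; hence, setting $g'=g\circ\tau_t$ and ${\bf X}'={\bf X}\circ\tau_t$, one obtains $(g_\di{\bf X})\circ\tau_t=g'_\di{\bf X}'$. Moreover $g'$ is again an extended $t$-flow whose generator takes values in the translation-invariant space $W$, so $g'\in\mathcal{G}^s(W)$, and ${\bf X}'\in V(\mathbb{R}^s)$ because the ZCC and the $V$-valuedness are preserved under reparametrisation by $\tau_t$. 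In particular ${\bf j}^2_0 g'\in J^2_0(\mathbb{R}^s,G)$ and ${\bf j}^1_0{\bf X}'\in J^1_0(\mathbb{R}^s,V\otimes\mathbb{R}^s)$.

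Finally I would apply Proposition \ref{LGAP} with the representatives $g'$ and ${\bf X}'$, which yields
\[
{\bf j}^1_t(g_\di{\bf X})={\bf j}^1_0\big((g_\di{\bf X})\circ\tau_t\big)={\bf j}^1_0(g'_\di{\bf X}')=\varphi^1({\bf j}^2_0 g',{\bf j}^1_0{\bf X}').
\]
Thus ${\bf j}^1_t(g_\di{\bf X})$ and ${\bf j}^1_t{\bf X}={\bf j}^1_0{\bf X}'$ lie in one and the same orbit of $\varphi^1$. Since $F$ is by hypothesis constant along $\varphi^1$-orbits, we conclude $F({\bf j}^1_t(g_\di{\bf X}))=F({\bf j}^1_t{\bf X})$ for all $g$, ${\bf X}$ and $t$, which is exactly the defining property of a first-order quasi-Lie invariant in Definition \ref{Def:QLI}. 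The main point to check carefully is thus the compatibility between translation in the base and the $\di$-action (the third paragraph above); granting that, the corollary is indeed immediate from Proposition \ref{LGAP}.
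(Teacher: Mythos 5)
Your proof is correct and takes essentially the same route as the paper, which states this corollary without proof as an ``immediate consequence'' of Proposition \ref{LGAP} and Definition \ref{Def:QLI}; your argument is exactly that deduction with the only genuine gap --- reconciling jets based at $t$ with jets based at $0$ --- filled in properly via translation. Your key step that $g\circ\tau_t$ again lies in $\mathcal{G}^s(W)$ is legitimate precisely because the paper defines $\mathcal{G}^s(W)$ via extended $t$-flows with arbitrary foot points, i.e.\ as the transformations $\varphi(\sigma(t),\cdot)$ for arbitrary $\sigma:\mathbb{R}^s\rightarrow G$, which is the feature the authors introduced for this purpose.
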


Corollary \ref{QLI} retrieves as a particular case Corollary 12.2 and particular cases of Theorem 3.10 in \cite{CL15}. Relevantly, the generalisations of our results to quasi-Lie invariants of any order are immediate. In this sense, this improves all results in \cite{CL15}, where only a few necessary conditions for the existence of quasi-Lie invariants where given.

Let us now give the following application of Corollary \ref{QLI} to the description of the integrability of generalised Abel differential equations through the generalised Chiellini integrability conditions.
This will allow for the determination of $F_1$ and $F_2$ given in (\ref{AbelInv}) and in \cite{HLM13}.

\begin{example} We already know that $(W_{\rm GA},V_{\rm GA})$ is a quasi-Lie scheme and we described the elements of $\mathcal{G}^1(W_{\rm GA})$ through  the Lie group action $\varphi$. %In turn,% the Lie group $J^1_0(\mathbb{R},\mathbb{R}_*)$ induces a Lie group action on $V_{\rm GA}$ of the form
%$$
%\begin{array}{rccc}
%\varphi^0:&J^1_0(\mathbb{R},\mathbb{R}_*)\times V_{\rm GA}&\longrightarrow& V_{\rm GA},\\
%&(\beta,\dot\beta; a,b,f,g)&\mapsto&\left(a\beta,\frac{\dot\beta}{\beta}+b,\frac{f}{\beta^{\alpha-2}},\frac{g}{\beta^{\alpha-1}},\right).
%\end{array}
%$$
%Integrating the left-invariant vector fields of the Lie group $J_0^1(\mathbb{R},\mathbb{R}_*)$ obtained in Example \ref{JetGroup} and verifying how their integral curves act on the elements of $V_{\rm GA}$, we find that the fundamental vector fields of $\varphi^1$ are spanned by
%$$
%X_1=a\partial_a+(2-\alpha)f\partial_f+(1-\alpha)g\partial_g,\qquad X_2=\partial_b.
%$$
%Then, $F_2=ag^{\alpha-1}/f^\alpha$ is a constant of motion of these vector %fields giving rise to the quasi-Lie invariant.

The functions $F_1$ and $F_2$ can be retrieved as follows. In view of the expressions (\ref{Ex2}), the Lie group $J^2_0(\mathbb{R},\mathbb{R}_*)$ induces a Lie group {action} $\phi^1:J^2_0(\mathbb{R},\mathbb{R}_*)\times J^1_0(\mathbb{R},V_{\rm GA})\rightarrow J^1_0(\mathbb{R},V_{\rm GA})$, given by
$$
\begin{array}{rccc}
\phi^1(\beta,\dot\beta,\ddot \beta; {\bf j}_0^1X)=\left(a\beta,\frac{\dot \beta}{\beta}+b,\frac{f}{\beta^{\epsilon-2}},\frac{g}{\beta^{\epsilon-1}},\dot a\beta+a\dot\beta,\frac{\ddot\beta\beta-\dot\beta^2}{\beta^2}+\dot b,\frac{\dot f\beta+(2-\epsilon)f\dot\beta}{\beta^{\epsilon-1}},\frac{\dot g \beta+(1-\epsilon)g\dot \beta}{\beta^{\epsilon}}\right).
\end{array}
$$
By using the results of Example \ref{JetGroup}, we obtain that the fundamental vector fields of the Lie group action $\phi^1$ are spanned by
$$
X_1^{[1]}=a\partial_a+(2-\epsilon)f\partial_f+(1-\epsilon)g\partial_g+\dot a\partial_{\dot a}+\dot b\partial_{\dot b}+(2-\epsilon)\dot f\partial_{\dot f}+(1-\epsilon)\dot g\partial_{\dot g}.
$$
$$
X_2^{[1]}=\partial_b+a\partial_{\dot a}+(2-\epsilon)f\partial_{\dot f}+(1-\epsilon) g\partial_{\dot g},\qquad X_3^{[1]}=\partial_{\dot b}.
$$
A straightforward computation shows that the functions $F_1$ and $F_2$ are constants of the motion of latter vector fields. In view of Corollary \ref{QLI}, both are quasi-Lie invariants of order one. Since the vector fields $X_1^{[1]},X_2^{[1]},X_3^{[1]}$ span a distribution of rank three on an eight-dimensional manifold, there exist four more functionally independent invariants of the vector fields $X_1^{[1]},X_2^{[1]},X_3^{[1]}$. For instance, one of them is the function $F_3:J^1_0(\mathbb{R},V_{\rm GA})\rightarrow\mathbb{R}$ of the form
$$
F_3({\bf j}^1_tX^{\bf b})=\frac{(\epsilon-1)g^{\epsilon-2}\dot gc+g^{\epsilon-1}\dot b-\epsilon \dot f g^{\epsilon-1}c}{f^{\epsilon+1}}.
$$
\end{example}

Note that if ${\bf X}^{\bf b}$ can be mapped into a Lie system related to a one-dimensional Lie algebra, then the {values of $F_1$ and $F_2$} must be constant on the final system. This provides a necessary condition for the integrability of ${\bf X}^{{\bf b}}$ through quasi-Lie schemes. Nevertheless, $F_3$ is not constant in quasi-Lie schemes related to $(W_{\rm GA},V_{\rm GA})$. Hence, quasi-Lie invariant can be used to obtain integrability conditions, but every {case} must be studied separately since only certain quasi-Lie invariants will be meaningful. The determination of sufficient conditions to ensure integrability and other related topics will be the topic of future works. The above procedure can be applied in a similar way to other works accomplished by the authors of \cite{HLM13}.

\section{Conclusions and Outlook}

We have characterised families of integrable PDEs in normal form admitting a common $t$-dependent superposition rule and we have established their relations to the theory of quasi-Lie schemes and quasi-Lie systems.  In the meanwhile, we have found new examples of PDE Lie systems occurring in B\"acklund transformations and the theory of Lax pairs \cite{Ra89,ZT09}.  This motivates the further analysis of the applications of PDE Lie systems, whose analysis has been mainly theoretical until now \cite{CGM07,Dissertationes,GR95,OG00}. The theory of quasi-Lie invariants has been improved by giving a more geometrical approach than in previous works \cite{CL15}, which has explained more concisely some of the methods for finding these invariants.

In the future, we aim to study generalisations of superposition rules to other types of differential equations, e.g. fractional differential and difference equations. Moreover, we plan to describe new applications of new and known types of superposition rules in mathematics and physics. Additionally, we aim to further investigate the use of quasi-Lie schemes in the integrability of PDEs.

\color{black}

\section*{Acknowledgements}
 Partial financial support by research projects {MTM2015-64166-C2-1-P} and E24/1 (DGA)
 are acknowledged. J. Grabowski and J. de Lucas acknowledge funding
from the Polish National Science Centre under grant HARMONIA 2016/22/M/ST1/00542.

\end{document}